\newtheorem{thm}{Theorem}[section]
\newtheorem{lem}[thm]{Lemma}
\newtheorem{cor}[thm]{Corollary}
\newtheorem{prop}[thm]{Proposition}
\newtheorem{definition}[thm]{Definition}
\newtheorem{rem}[thm]{Remark}
\newtheorem{example}[thm]{Example}
\newcommand{\N}{\mathbb{N}}
\newcommand{\R}{\mathbb{R}}
\newcommand{\bB}{\mathbf{B}}
\newcommand{\bK}{\mathbf{K}}
\newcommand{\bS}{\mathbf{S}}
\newcommand{\bU}{\mathbf{U}}
\newcommand{\bX}{\mathbf{X}}
\newcommand{\vx}{\mathbf{x}}
\newcommand{\vy}{\mathbf{y}}
\newcommand{\vp}{\mathbf{p}}
\newcommand{\vv}{\mathbf{v}}
\newcommand{\vw}{\mathbf{w}}
\newcommand{\vz}{\mathbf{z}}
\newcommand{\vO}{\mathbf{0}}
\newcommand{\balpha}{\boldsymbol\alpha}
\newcommand{\bbeta}{\boldsymbol\beta}
\newcommand{\cO}{\mathcal{O}}
\newcommand{\cK}{\mathcal{K}}
\newcommand{\cL}{\mathcal{L}}
\newcommand{\cM}{\mathcal{M}}
\newcommand{\cP}{\mathcal{P}}
\newcommand{\cQ}{\mathcal{Q}}
\newcommand{\od}{\mathrm{d}}
\renewcommand{\epsilon}{\varepsilon}
\DeclareMathOperator{\argmin}{\mathrm{argmin}}
\newcommand{\SOS}{\mathrm{\Sigma}}
\newcommand{\one}{\mathbf{1}}
\renewcommand{\c}{\mathfrak{c}}
\newcommand{\rank}{\mathrm{rank}}
\newcommand{\norm}[1]{\left\lVert #1\right\rVert}
\newcommand{\norms}[2]{{\norm{#1}}_{#2}}
\newcommand{\abs}[1]{\left|#1\right|}
\newcommand{\dist}{\mathrm{dist}}
\newcommand{\supp}{\mathrm{supp}}
\title{Convergence rate for linear minimizer-estimators in the moment-sum-of-squares hierarchy}
\author{Corbinian Schlosser$^1$}
\date{\today}
\begin{document}

\maketitle
\footnotetext[1]{INRIA - Ecole Normale Supérieure - PSL Research University, Paris, France.}

\begin{abstract}
    Effective Positivstellensätze provide convergence rates for the moment-sum-of-squares (SoS) hierarchy for polynomial optimization (POP). In this paper, we add a qualitative property to the recent advances in those effective Positivstellensätze. We consider optimal solutions to the moment relaxations in the moment-SoS hierarchy and investigate the measures they converge to. It has been established that those limit measures are the probability measures on the set of optimal points of the underlying POP. We complement this result by showing that these measures are approached with a convergence rate that transfers from the (recent) effective Positivstellensätze. As a special case, this covers estimating the minimizer of the underlying POP via linear pseudo-moments. Finally, we analyze the same situation for another SoS hierarchy -- the upper bound hierarchy -- and show how convexity can be leveraged.
\end{abstract}

\section{Introduction}

Since its introduction around the year 2000 in the works~\cite{lasserre2001global,parrilo2000structured}, the moment-sum-of-squares (SoS) hierarchy has demonstrated remarkable success and found applications in numerous fields. Originally developed for addressing polynomial optimization problems (POPs) of the form
\begin{equation}\label{intro:pop}
    f^\star := \min\limits_{\vx \in \bK} f(\vx),
\end{equation}
where \( f \) is a polynomial and \( \bK \) is a semialgebraic set -- this hierarchy has since been extended to a wide range of applications. These include stability analysis and control of dynamical systems~\cite{parrilo2000structured,korda2014convex,lasserre2008nonlinear}, graph theory~\cite{de2002approximation}, game theory~\cite{laraki2012semidefinite}, and quantum information theory~\cite{fang2021quantum,klep2023state}, among others. For a comprehensive overview, see~\cite{lasserre2015introduction,lasserre2009moments,henrion2020moment}.

The moment-SoS hierarchy comprises two dual sequences of finite-dimensional semidefinite programs: the \textit{moment relaxations}, whose decision variables correspond to pseudo-moments, and the \textit{SoS tightenings}, which employ sums-of-squares polynomials to certify positivity. A key feature of this hierarchy, when applied to solving (\ref{intro:pop}), is its construction of a sequence of finite-dimensional convex optimization problems whose optimal values converge monotonically to the global minimum \(f^\star\). However, while the convergence of these objective values is guaranteed under mild assumptions, extracting optimal points \(\vx^\star \in \bK\) from the moment-SoS hierarchy poses significant challenges. Positive results are available for (generic!, see \cite{nie2013certifying,baldi2024exact}) cases where the \textit{flatness condition} (see Definition~\ref{def:flatness}) holds, enabling exact extraction of minimizers~\cite{curto2000truncated}. Nevertheless, determining in advance whether flatness will occur for a given problem instance is generally computationally hard~\cite{vargas2024hardness}. Thus, the optimal point extraction from the hierarchy invites further investigations.

This paper builds upon and extends existing literature in this domain, focusing on the moment side of the hierarchy. Specifically, it refines the results of~\cite[Theorem 12]{schweighofer2005optimization}, where it is shown that the solutions of moment relaxations converge to the moments of measures supported on the set of optimal points for (\ref{intro:pop}). Our contribution enhances this analysis by providing a quantitative convergence rate, leveraging insights from~\cite{baldi2021moment}. For cases where (\ref{intro:pop}) has a unique minimizer \(\vx^\star \in \bK\), we establish that this minimizer can be approximated, with a polynomial convergence rate, using linear pseudo-moments obtained from the solutions within the moment hierarchy. Furthermore, we show that certain convexity properties of the POP (\ref{intro:pop}) translate into qualitative guarantees for the approximation of $\vx^\star$. These include feasibility of the approximation and a priori bounds on its associated cost (see Section \ref{sec:UpperboundHierarchy}).

Additionally, we show that certain convexity properties of the POP (\ref{intro:pop}), transfer to certain qualitative results on the approximations of the minimizer $\vx^\star$, including feasibility and apriori bounds on its cost (see Section \ref{sec:UpperboundHierarchy}). This part is closely linked to \cite[Section 13]{lasserre2015introduction}.

A central element of our analysis involves the use of Positivstellensätze, such as Schmüdgen’s and Putinar’s theorems~\cite{putinar1993positive,schmudgen2017moment}, which provide structural decompositions of positive polynomials as sums of squares. Quantitative refinements of these results, as developed in~\cite{nie2007complexity}, have enabled bounds on convergence rates within the moment-SoS hierarchy. Building on these foundations, the recent work~\cite{baldi2021moment} introduced tighter bounds, establishing polynomial growth rates for effective versions of Putinar’s Positivstellensatz and exploring the dual perspective on pseudo-moment sequences.

Compared to~\cite{baldi2021moment}, our analysis focuses on approximating pseudo-moments of the moment-SoS hierarchy by measures supported specifically on the set of optimal points for the POP (\ref{intro:pop}). This offers an enhanced characterization of the convergence behavior compared to~\cite[Section 5]{baldi2021moment} where the limit measures are instead supported on the larger semialgebraic set $K$ associated with the POP (\ref{intro:pop}).

The remainder of this paper is structured as follows. Section~\ref{sec:Preliminaries} reviews the moment-SoS hierarchy and Putinar’s Positivstellensatz, laying the groundwork for subsequent developments. Section~\ref{sec:ConvRatesHierarchy} recalls key quantitative results on convergence rates from the literature. In Section~\ref{sec:MinExtraction}, we summarize existing techniques for extracting minimizers from solutions within the moment hierarchy. Our main contributions are detailed in Section~\ref{sec:MomentConvergence}, where we analyze the quantitative convergence of moment relaxations and establish polynomial convergence rates for certain moment-based estimators of the minimizer of (\ref{intro:pop}). Section~\ref{sec:UpperboundHierarchy} extends this analysis to an upper-bound hierarchy and highlights its interplay with convexity properties of the original POP. Finally, Section~\ref{sec:alternatives} explores alternative approaches to approximating the minimizers of (\ref{intro:pop}), discussing limitations and future directions, before concluding in Section~\ref{sec:Conclusion}.

\section{Notation}

We denote the reals by $\R$ and the natural numbers by $\N$. We make the convention of writing vectors or sets of vectors in bold font and scalars in regular font. For $n\in \N$, we denote by $\R[X_1,\ldots,X_n]$, or short $\R[\bX]$, the space of polynomials in $n$ variables with coefficients in $\R$. For a polynomial $f\in \R[\bX]$, we denote by $\deg (f)$ the degree of $f$. The space of polynomials of degree at most $d\in \N$ is denoted by $\R[\bX]_d$. For a multi-index $\balpha = (\alpha_1,\ldots,\alpha_n) \in \N^n$, $|\balpha| := \alpha_1+\ldots+\alpha_n$ is the range of $\balpha$ and $\bX^{\balpha} := X_1^{\alpha_1}\cdots X_n^{\alpha_n}$ is the corresponding monomial. For a measurable set $\bK \subset \R^n$ the set $\cM(\bK)$ denotes the set of Borel measures on $\bK$. The Lebesgue measure is always denoted by $\lambda$ and its restriction to a set $\bK$ by $\lambda\big|_{\bK}$. For two measures $\mu,\nu \in \cM(\bK)$ we write $\od \nu = g\ \od \mu$ if $\nu$ has density $g$ with respect to $\mu$.

\section{Preliminaries}\label{sec:Preliminaries}

\subsection{Moment-SoS hierarchy for polynomial optimization}

We consider a constraint optimization problem  of the form
\begin{eqnarray}\label{eq:MinfProblem}
    f^* := \inf\limits_{\vx} & f(\vx) &\\
	\text{s.t.} & \vx\in \bK & \notag
\end{eqnarray}
where $f \in \R[\bX]$ is a polynomial and $\bK\subset \R^n$ a given set. To leverage the machinery from real algebraic geometry we assume that the constraint set $\bK$ is closed basic semialgebraic, i.e. it can be described by polynomial inequalities.

\begin{definition}\label{def:BasicSemialgebraic}
    A subset $\bK \subset \R^n$ is called closed basic semialgebraic if there exists $m \in \N$ and polynomials $p_1,\ldots,p_m \in \R[\bX]$ such that $\bK$ has the representation
    \begin{equation}\label{EquKSemialgebraic}
	    \bK = \cK(p_1,\ldots,p_m):= \{\vx \in \R^n : p_1(\vx)\geq 0,\ldots,p_m(\vx)\geq 0\}.
    \end{equation}
    The set $\bK$ is called closed semialgebraic if it is a finite union of closed basic semialgebraic sets. We will write $\bK = \cK(\vp)$ to shorten the notation.
\end{definition}

For the minimization problem (\ref{eq:MinfProblem}) this leads to the notion of polynomial optimization.

\begin{definition}[Polynomial optimization problem (POP)]\label{prelim:def:POP}
    The problem
    \begin{eqnarray}\label{eq:pop}
	    f^* := \inf\limits_{\vx} & f(\vx) &\\
	\text{s.t.} & \vx \in \bK. & \notag
    \end{eqnarray}
    with a polynomial $f$ and $\bK \subset \R^n$ a closed basic semialgebraic set is called a polynomial optimization problem.
\end{definition}

Polynomial optimization is linked to the analysis of positive polynomials via the following simple reformulation of the problem (\ref{eq:pop})
\begin{eqnarray}\label{eq:popPos}
    f^* = \sup\limits_{s \in \R} & s &\\
	\text{s.t.} & f - s \geq 0 \text{ on } \bK. \notag
\end{eqnarray}
Solving the formulation (\ref{eq:popPos}) requires testifying whether the polynomial $f-s$ is non-negative on $\bK$. At present, there are no computationally efficient characterizations for the cone of non-negative polynomials on $\bK$. However, for certain sub-cones, membership can be verified efficiently. The moment-SoS hierarchy~\cite{lasserre2001global} builds upon one such sub-cone -- the cone of SoS polynomials denoted by
\begin{equation}\label{prelim:SOSpolynomials}
    \SOS := \left\{ \sum\limits_{i = 1}^m q_i^2 : m \in \N, \; q_1,\ldots,q_m \in \R[\bX] \right\}.
\end{equation}
The polynomials $q\in \SOS$ are globally non-negative, to specify non-negativity to a closed semialgebraic set $\cK (\vp)$ for a vector of polynomials $\vp = (p_1,\ldots,p_m)$ we consider the quadratic module generated by $p_1,\ldots,p_m$.

\begin{definition}\label{def:QuadraticModule}
    For $p_1,\ldots,p_m \in \R[\bX]$, the quadratic module $\cQ(\vp)$ is defined by
    \begin{equation*}
        \cQ(\vp) := \left\{\sigma_0 + \sum\limits_{i = 1}^m \sigma_i p_i : \sigma_0,\sigma_1,\ldots,\sigma_m \in \SOS\right\}.
    \end{equation*}
    For $d\in \N$, we denote by $\cQ_d(\vp) \subset \R[\bX]_d$ the truncated quadratic module given given by
    \begin{eqnarray}\label{eq:quadrmodd}
        \cQ_d(\vp) := \bigg\{\sigma_0 + \sum\limits_{i = 1}^m \sigma_i p_i & : & \sigma_0,\sigma_1,\ldots,\sigma_m \in \SOS,\\
        & &\deg(\sigma_0),\deg(\sigma_1 p_1),\ldots,\deg(\sigma_m p_m) \leq d\bigg\}. \notag
    \end{eqnarray}
\end{definition}

If the closed basic semialgebraic set $\bK$ is given by $\bK = \cK(\vp)$ as in (\ref{EquKSemialgebraic}) then all polynomials in $\cQ(\vp)$ are non-negative on $\bK$. This motivated the following SoS hierarchy~\cite{lasserre2001global}. 

\begin{definition}[SoS hierarchy for polynomial optimization;~\cite{lasserre2001global}]\label{def:lasserrehierarchy}
    Let $f \in \R[\bX]$. The SoS hierarchy for the POP
    \begin{eqnarray*}
	    f^* := \inf\limits_{\vx \in \R^n} & f(\vx) &\\
	\text{s.t.} & \vx \in \cK(\vp) & \notag
\end{eqnarray*}
    is given by the following sequence of sum-of-squares programs: For each $d \in \N$ the $d$-th level of the SoS hierarchy is given by the optimization problem
    \begin{eqnarray}\label{eq:LasserreHierarchy}
	    f_d^* := \sup\limits_{s \in \R} & s &\\
	\text{s.t.} & f-s \in \cQ_d(\vp). & \notag
\end{eqnarray}
\end{definition}

\begin{rem}\label{prelim:RemarkLasserreHierarchy}
    For each $d \in \N$ the optimization problem (\ref{eq:LasserreHierarchy}) can be formulated as an SDP, see~\cite{lasserre2015introduction,parrilo2000structured,laurent2009sums} for surveys on polynomial optimization.
\end{rem}

Clearly, one should ask whether $f_d^*$ converges to $f^*$. This question is answered by the following theorem.

\begin{thm}[{Convergence of the SoS hierarchy;~\cite{lasserre2001global}}]\label{thm:ConvSoSHierarchy}
    Assume that $R - \norms{\bX}{2}^2 \in \cQ(\vp)$ for some $R\geq 0$ (Archimedean property). Then it holds
    \begin{equation*}
        f_d^* \leq f_{d+1}^* \text{ for all } d \in \N \text{ and } f_d^* \text{ converges to the \textbf{global} optimum }  f^* \text{ as } d\rightarrow \infty.
    \end{equation*}
\end{thm}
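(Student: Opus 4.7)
The plan is to establish the theorem in three steps: monotonicity, an easy upper bound $f_d^\star \leq f^\star$, and the matching lower bound via Putinar's Positivstellensatz, which is where the Archimedean assumption enters.

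First, monotonicity is immediate from the inclusion $\cQ_d(\vp) \subset \cQ_{d+1}(\vp)$, which follows directly from the degree bounds in~\eqref{eq:quadrmodd}. Any $s \in \R$ feasible at level $d$ (i.e.\ such that $f - s \in \cQ_d(\vp)$) is automatically feasible at level $d+1$, so the supremum can only increase. For the upper bound, I would note that any $s$ feasible at level $d$ satisfies $f - s = \sigma_0 + \sum_i \sigma_i p_i$ with $\sigma_i \in \SOS$, and each summand is non-negative on $\bK = \cK(\vp)$. Hence $f(\vx) \geq s$ for every $\vx \in \bK$, which gives $s \leq f^\star$ and, taking the sup over all such $s$, $f_d^\star \leq f^\star$.

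For the matching lower bound, fix $\epsilon > 0$ and consider the polynomial $g := f - (f^\star - \epsilon)$. Then $g(\vx) \geq \epsilon > 0$ for every $\vx \in \bK$. The Archimedean condition $R - \norms{\bX}{2}^2 \in \cQ(\vp)$ is exactly what is needed to invoke Putinar's Positivstellensatz, which asserts that every polynomial strictly positive on $\bK$ lies in $\cQ(\vp)$. Applying it to $g$, there exist SoS multipliers realizing $g \in \cQ(\vp)$, and since any such representation uses only finitely many terms of bounded degree, $g \in \cQ_{d(\epsilon)}(\vp)$ for some $d(\epsilon) \in \N$. Thus $s = f^\star - \epsilon$ is feasible at level $d(\epsilon)$, so $f_{d(\epsilon)}^\star \geq f^\star - \epsilon$. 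By monotonicity, $f_d^\star \geq f^\star - \epsilon$ for all $d \geq d(\epsilon)$, and since $\epsilon > 0$ is arbitrary together with $f_d^\star \leq f^\star$, we conclude $f_d^\star \to f^\star$.

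The only non-trivial ingredient is Putinar's Positivstellensatz, and this is the single place where the Archimedean hypothesis is used; the rest of the argument is purely formal. I would expect this result to be cited from~\cite{putinar1993positive} (or recalled in the preliminaries) rather than reproved, so the proof reduces to checking that the strict positivity gap $\epsilon$ and the elementary monotonicity/upper-bound arguments combine correctly. No quantitative information on $d(\epsilon)$ is needed here — that refinement is precisely the subject of the effective Positivstellensätze discussed in the subsequent section.
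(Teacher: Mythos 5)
Your proof is correct and follows exactly the route the paper intends: the paper states this as a cited result and notes in Section 3.2 that it is a consequence of Putinar's Positivstellensatz, which is precisely how you derive the lower bound, with the monotonicity and upper-bound steps being the standard elementary observations. No gaps.
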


\paragraph*{Moment approach and duality for the semidefinite programs}

The optimization problem (\ref{eq:popPos}) is a \textit{linear} optimization problem in $s$ (with an ``infinite dimensional constraint'' $f(\vx) - s \geq 0$ on $\bK$). Therefore it is subject to duality. We refer to the dual problem of (\ref{eq:popPos}) by \textit{the measure formulation} of the POP (\ref{eq:pop}), it reads
\begin{eqnarray}\label{eq:DualpopPos}
    m^\star = \inf\limits_{\mu} & \int\limits_K f \; d\mu &\\
	\text{s.t.} & \mu \in \cM(\bK)_+ & \notag\\
			    & \mu(\bK) = 1 & \notag
\end{eqnarray}
where $\cM(\bK)_+$ denotes the set of non-negative measures on $\bK$. Furthermore, it holds $m^\star = f^\star$~\cite{lasserre2015introduction}. The duality between (\ref{eq:popPos}) and (\ref{eq:DualpopPos}) is based on the duality between the cone of non-negative polynomials on $\bK$ and the cone of non-negative measures on $\bK$ where each measure $\mu \in \cM(\bK)$ is identified with a (positive) linear form
\begin{equation}\label{eq:DefLMu}
    L_\mu:\R[\bX] \rightarrow \R, \quad L_\mu(p) := \int\limits p \; \od \mu.
\end{equation}
This duality transfers to the SoS hierarchy via the cone $\cQ_d(\vp)$ and its dual cone $\cQ^*_d(\vp)$ given by
\begin{equation}\label{eq:def:QuadModDual}
    \cQ_d(\vp)^* := \{ L:\R[\bX]_{d} \rightarrow \R : L \text{ linear }, L(q) \geq 0 \text{ for all } q \in \cQ_d(\vp)\}.
\end{equation}

\noindent For $d\in \N$, the dual problem to (\ref{eq:LasserreHierarchy}) is given by
\begin{eqnarray}\label{eq:MomentHierarchy}
    m_d^* := & \inf\limits_{L} & L(f)\\
	& \text{s.t.} & L \in \cQ_d(\vp)^*\notag\\
& & L(1) = 1. \notag
\end{eqnarray}

Analog to the SoS hierarchy, we refer to the hierarchy of optimization problems (\ref{eq:MomentHierarchy}) as the moment hierarchy.

\begin{rem}
    For $d\in \N$ the cone $\cQ_d(\vp)^*$ does not necessarily consist of truncations of measures (\ref{eq:DefLMu}) and we identify its elements $L\in \cQ_d(\vp)^*$ with the associated family so-called pseudo-moments $L\left(\bX^{\balpha}\right)$ for $\balpha\in \N^n$ with $\abs{\balpha}\leq d$. In general, $\cQ_d(\vp)^*$ is a strict superset of the cone of truncated measures, i.e. the following inclusion is strict
    \begin{equation*}
        \cQ_d(\vp)^* \supset \{ (L_\mu)\big|_{\R[\bX]_{d}}: \mu \in \cM(\bK) \text{ and } L_\mu \text{ given by (\ref{eq:DefLMu})}\}. 
    \end{equation*}
    For this reason, each level (\ref{eq:MomentHierarchy}) in the moment hierarchy is also called a moment-relaxation and it holds $m_d^* \leq f^\star$. Dually, the SoS hierarchy (\ref{eq:LasserreHierarchy}) is also referred to as SoS-tightening.
\end{rem}

By weak duality, it holds $f_d^* \leq m_d^* \leq f^*$, and from Theorem \ref{thm:ConvSoSHierarchy} we get the following corollary.

\begin{cor}\label{cor:ConvMomHierarchy}
    Under the assumption from Theorem \ref{thm:ConvSoSHierarchy} it holds
    \begin{equation*}
        f_d^* \leq m_d^* \nearrow f^* \quad \text{as} d\rightarrow \infty.
    \end{equation*}
\end{cor}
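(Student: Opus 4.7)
The plan is to combine weak LP duality with the already established convergence $f_d^* \nearrow f^*$ from Theorem~\ref{thm:ConvSoSHierarchy}. Three ingredients need to be checked: the weak duality inequality $f_d^* \leq m_d^*$, the relaxation bound $m_d^* \leq f^*$, and monotonicity $m_d^* \leq m_{d+1}^*$; the convergence then follows from a sandwich argument.

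For weak duality, I would take any pair $(s,L)$ feasible for (\ref{eq:LasserreHierarchy}) and (\ref{eq:MomentHierarchy}), respectively. Since $f - s \in \cQ_d(\vp)$ and $L \in \cQ_d(\vp)^*$, one has $L(f-s) \geq 0$, and together with $L(1) = 1$ this gives $L(f) \geq s$. Taking $\sup$ over $s$ and $\inf$ over $L$ yields $f_d^* \leq m_d^*$. For the upper bound $m_d^* \leq f^*$, I would observe that for any $\vx \in \bK$ the evaluation functional $L_{\delta_\vx}(p) := p(\vx)$ lies in $\cQ_d(\vp)^*$ (since every element of $\cQ_d(\vp)$ is pointwise nonnegative on $\bK$ by construction), satisfies $L_{\delta_\vx}(1) = 1$, and attains value $f(\vx)$ on the objective; taking the infimum over $\vx \in \bK$ produces the bound. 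Monotonicity follows from the inclusion $\cQ_d(\vp) \subset \cQ_{d+1}(\vp)$, which is immediate from the degree bounds in (\ref{eq:quadrmodd}): any functional feasible for level $d+1$ restricts, without changing its value on $f$ (for $d \geq \deg(f)$) or on $1$, to a functional feasible for level $d$.

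With these three facts in hand, $f_d^* \leq m_d^* \leq f^*$ for all $d$, and $(m_d^*)_d$ is nondecreasing. Since the lower envelope $f_d^*$ already converges to $f^*$ by Theorem~\ref{thm:ConvSoSHierarchy}, the squeeze theorem forces $m_d^* \to f^*$, and monotonicity upgrades this to $m_d^* \nearrow f^*$.

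I do not expect any genuine obstacle: the corollary is a direct consequence of duality and the previously stated SoS convergence theorem. The only detail to be careful about is the degree bookkeeping when restricting a functional from $\R[\bX]_{d+1}$ to $\R[\bX]_d$, which is routine once one assumes $d \geq \deg(f)$ (as is standard when indexing the hierarchy).
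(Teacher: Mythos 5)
Your proposal is correct and follows essentially the same route as the paper, which simply invokes weak duality to get $f_d^* \leq m_d^* \leq f^*$ and then sandwiches using the convergence $f_d^* \nearrow f^*$ from Theorem~\ref{thm:ConvSoSHierarchy}. The details you supply (the pairing $L(f-s)\geq 0$, the Dirac evaluation functionals for $m_d^*\leq f^*$, and the degree bookkeeping for monotonicity) are exactly the standard facts the paper leaves implicit.
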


\begin{rem}
    In~\cite{tacchi2022convergence}, the convergence of the moment hierarchy (\ref{eq:MomentHierarchy}) is guaranteed for more general conditions than the Archimedean condition from Theorem \ref{thm:ConvSoSHierarchy}.
\end{rem}


\subsection{Putinar's Positivstellensatz}
The convergence results from Theorem \ref{thm:ConvSoSHierarchy} and Corollary \ref{cor:ConvMomHierarchy} are consequences of the celebrated Putinar's Positivstellensatz~\cite{putinar1993positive}. It states that under a certain compactness condition (Archimedean property) on the set $K =\cK(\vp)$ any \textit{strictly positive} polynomial on $\bK$ is contained in the quadratic module $\cQ(\vp)$.

\begin{thm}[Putinar's Positivstellensatz {\cite[Theorem 1.3 \& Lemma 3.2]{putinar1993positive}}] \label{thm:putinar}
    Let $m \in \N$ and $\vp\in \R[\bX]$. If there exists $R \geq 0$ s.t. $R^2 - \norms{X}{2}^2 \in \cQ(\vp)$, then any polynomial $f\in \R[\bX]$ that is strictly positive on $\cK(\vp)$ belongs to $\cQ(\vp)$.
\end{thm}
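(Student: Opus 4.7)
The plan is to argue by contradiction, using a Hahn--Banach-type separation together with a representation theorem that identifies a positive linear functional on $\R[\bX]$ (vanishing nowhere worse than on non-positive polynomials over $\cK(\vp)$) with integration against a Borel measure on $\cK(\vp)$. The Archimedean hypothesis enters twice: once to guarantee that $\cK(\vp)$ is compact (it lies in the ball of radius $R$, since $R^2-\norms{\vx}{2}^2 \geq 0$ on $\cK(\vp)$), and once more structurally to bound the growth of the separating functional.

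First, I would show that the Archimedean hypothesis $R^2-\norms{\bX}{2}^2 \in \cQ(\vp)$ implies the algebraic Archimedean property: for every $q\in \R[\bX]$ there exists $N\in \N$ such that $N\pm q \in \cQ(\vp)$. This is proven by induction on degree, using the identity
\begin{equation*}
    2 X_i q = \tfrac{1}{2}\bigl((X_i+q)^2 - (X_i - q)^2\bigr)
\end{equation*}
and leveraging $R^2-\norms{\bX}{2}^2 \in \cQ(\vp)$ to handle the monomials $X_i^2$, so that products with $X_i$ can be absorbed into $\cQ(\vp)$ after sufficient shifts. This step is standard and essentially bookkeeping.

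Next, assume toward contradiction that $f>0$ on $\cK(\vp)$ but $f \notin \cQ(\vp)$. The cone $\cQ(\vp)$ is convex and contains the Archimedean order unit $1$, so I can separate $f$ from $\cQ(\vp)$ by a linear functional $L:\R[\bX]\to \R$ with $L(q)\geq 0$ for all $q\in \cQ(\vp)$ and $L(f)\leq 0$. Normalizing so that $L(1)=1$, the Archimedean property yields $\abs{L(q)} \leq N_q$ for every $q$, where $N_q$ is any constant with $N_q\pm q\in \cQ(\vp)$. In particular, $L$ is continuous with respect to the supremum norm on $\cK(\vp)$: if $\norms{q}{\infty,\cK(\vp)}\leq \epsilon$, then $(\epsilon+\delta)\pm q$ is strictly positive on $\cK(\vp)$ for every $\delta>0$, and a limit argument combined with the Archimedean property forces $\abs{L(q)}\leq \epsilon$.

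By Stone--Weierstrass, $\R[\bX]$ is dense in $C(\cK(\vp))$, so $L$ extends uniquely to a positive, unit-preserving continuous linear functional on $C(\cK(\vp))$. The Riesz--Markov representation theorem then provides a Borel probability measure $\mu$ on the compact set $\cK(\vp)$ such that $L(p) = \int p\,\od\mu$ for all $p\in \R[\bX]$. Since $f>0$ on the compact set $\cK(\vp)$, there is a positive lower bound, giving $L(f)=\int f\,\od\mu>0$, contradicting $L(f)\leq 0$. Hence $f\in \cQ(\vp)$.

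The main obstacle is the continuity/representation step: passing from a purely algebraic positivity condition on $\cQ(\vp)$ to $C(\cK(\vp))$-continuity of $L$. All the work is hidden in the inequality $\abs{L(q)} \leq \norms{q}{\infty,\cK(\vp)}$, which requires combining the algebraic Archimedean property (to control $L$ on arbitrary polynomials) with the observation that strictly positive polynomials on $\cK(\vp)$ can be approximated by elements of the Archimedean hull of $\cQ(\vp)$. Once this is in place, the Riesz representation step is essentially automatic, and the contradiction follows from compactness of $\cK(\vp)$ and strict positivity of $f$.
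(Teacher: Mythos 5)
The paper does not prove this theorem itself --- it is quoted from Putinar's original article --- so I can only assess your argument on its own merits. The overall architecture (derive the algebraic Archimedean property, separate $f$ from the convex cone $\cQ(\vp)$ by a linear functional $L$ with $L \geq 0$ on $\cQ(\vp)$ and $L(f) \leq 0$, represent $L$ by a measure on $\cK(\vp)$, and contradict strict positivity) is exactly the classical strategy, and your first two steps are sound: the subring $\{q : \exists N,\ N \pm q \in \cQ(\vp)\}$ argument for the Archimedean property, and the Eidelheit-type separation using $1$ as an algebraic interior point of $\cQ(\vp)$, are both standard.

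The representation step, however, is circular as written. To deduce $\abs{L(q)} \leq \epsilon + \delta$ from ``$(\epsilon+\delta) \pm q$ is strictly positive on $\cK(\vp)$'' you must pass from strict positivity on $\cK(\vp)$ to membership in $\cQ(\vp)$, so that positivity of $L$ on $\cQ(\vp)$ applies --- and that implication is precisely Putinar's Positivstellensatz, the statement being proved. Indeed, the inequality $\abs{L(q)} \leq \norms{q}{\infty,\cK(\vp)}$ for every normalized $L \geq 0$ on $\cQ(\vp)$ is, combined with your separation argument, \emph{equivalent} to the theorem, so it cannot be where the work is ``hidden''. What the Archimedean property gives you unconditionally is only $\abs{L(q)} \leq \inf\{N : N \pm q \in \cQ(\vp)\}$, and the entire content of the theorem is that this quantity is not much larger than $\norms{q}{\infty,\cK(\vp)}$. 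To close the gap you need a genuinely different input for the representation step: either Putinar's original route --- the GNS construction on $\R[\bX]$ with inner product $\langle g,h\rangle := L(g h)$, boundedness of the commuting symmetric multiplication operators $M_{X_i}$ (from $R^2 - X_i^2 \in \cQ(\vp)$ and the fact that $\cQ(\vp)$ is stable under multiplication by squares), and the joint spectral theorem, which simultaneously localizes the support of the representing measure inside $\cK(\vp)$ via $L(p_i q^2) \geq 0$ --- or alternatively the Jacobi/Kadison--Dubois representation theorem for Archimedean quadratic modules. Stone--Weierstrass and Riesz--Markov only become available \emph{after} one of these establishes the sup-norm continuity of $L$.
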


We end this section with a few short remarks on Putinar's Positivstellensatz.

\begin{rem}[On the Archimedean property] \label{rem:archi} \leavevmode
In practice, in many cases the set $\bK:= \cK (\vp)$ is compact and a radius $R\geq 0$ with $\bK \subset \bB_R(0)$ is known. In that case, the Archimedean property can be enforced by simply adding the constraint $p_{m+1}(\vx) := R^2- \norms{\vx}{2}^2$.
\end{rem}

\begin{rem}[Need for \textit{strict} positivity]
    If, in Putinar's Positivstellensatz, strict positivity is replaced by non-negativity the statement does not remain true. A famous example of a non-negative polynomial that is not sum-of-squares is the Motzkin polynomial~\cite{motzkin1967arithmetic}. Even more is true -- there are many more non-negative polynomials on $\cK(\vp)$ than in the quadratic module $\cQ(\vp)$, see~\cite{blekherman2006there}.
\end{rem}

\begin{rem}[Effective Positivstellensätze]
    Compared to the structural statement, Theorem \ref{thm:putinar}, on positive polynomials, effective versions of Putinar's Positivstellensatz address bounding $d\in \N$ for which a positive polynomial $f$ belongs to the truncated quadratic module $\cQ_d(\vp)$ defined in (\ref{eq:quadrmodd}). We will state a recent effective Positivstellensatz from~\cite{baldi2021moment} in Theorem \ref{thm:OldBaldi}.
\end{rem}

\subsection{Convergence rates for the moment-SoS hierarchy}\label{sec:ConvRatesHierarchy}

In this section, we recall some of the important concepts and recent developments concerning the convergence rate of $f_d^\star \rightarrow f^\star$ respectively $m_d^\star \rightarrow f^\star$ as $d$ tends to infinity. The main result in this paper is that the convergence rates relate to convergence rates for (candidate) minimizers of (\ref{eq:pop}) extracted from the moment-SoS hierarchy. 

\paragraph{Flatness and finite convergence}\label{Sec:Flatness}
The moment hierarchy, respectively the moment-relaxations (\ref{eq:MomentHierarchy}),  enjoys a powerful stopping criterion certifying finite convergence. This is the so-called flatness criterion. To formulate it, we introduce the bilinear form $B^L:\R[\bX]_{d}\times \R[\bX]_d \rightarrow \R$, associated to a linear form $L:\R[\bX]_{2d}\rightarrow \R$, given by
\begin{equation*}
    B^L(g,h) := L(g\cdot h) \quad \text{for } g,h\in \R[\bX]_d.
\end{equation*}
The bilinear form $B^L$ has a representing matrix $M^L = (M^L_{\alpha,\beta})_{\abs{\balpha},|\bbeta| \leq d}$, the so-called moment matrix. 
It is given by
\begin{equation}\label{eq:MLBilinearForm}
    M^L_{\alpha,\beta} := B^L(\bX^{\balpha},\bX^{\bbeta}) = L(\bX^{\balpha} \cdot  \bX^{\bbeta}) = L(\bX^{\balpha + \bbeta}) \quad \text{for } \abs{\balpha},\abs{\bbeta} \leq d
\end{equation}
and for $g = \sum\limits_{\abs{\balpha}\leq d} v_{\balpha} \bX^{\balpha},\, h = \sum\limits_{\abs{\balpha}\leq d} w_{\balpha} \bX^{\balpha} \in \R[\bX]_d$ it holds
\begin{equation}\label{eq:MRepMatrixL}
    L(g\cdot h) = \vv^T M^L \vw
\end{equation}
where $\vv = (v_{\balpha})_{\abs{\balpha} \leq d}, \vw = (w_{\balpha})_{\abs{\balpha} \leq d}$.
By commutativity of $\R[\bX]_d$, the bilinear form $B^L$ (respectively the moment matrix $M^L$) is symmetric. Further, $B^L$ (respectively $M^L$) is positive semidefinite if and only if $L\in \cQ(1)_{2d}^*$. 

The following flatness criterion concerns the rank of the matrix $M^L$, see for instance~\cite[Definition 2.39]{lasserre2015introduction} or~\cite{curto2000truncated,laurent2009sums}.

\begin{definition}[{r-Flatness}]\label{def:flatness}
    Let $r,d \in \N$ with $r \leq d$. A linear form $L:\R[\bX]_{2d}\rightarrow \R$ is $r$-flat if it holds
    \begin{equation*}
        \rank \ M^{L} = \rank \ M^{L_{-r}}
    \end{equation*}
    where $L_{-r}:= L\big|_{\R[\bX]_{2(d-r)}}:\R[\bX]_{2(d-r)}\rightarrow \R$ denotes the restriction of $L$ to $\R[\bX]_{2(d-r)}$.
\end{definition}

Flatness is a powerful tool to guarantee finite convergence of the moment hierarchy, as the following Theorem emphasizes.

\begin{thm}[{\cite[Theorem 6.18]{laurent2009sums}}]\label{thm:flatness1}
    Set $r := \max\limits_{i} \deg (p_i)$ and let $d\geq \max \{\deg(f),r\}$. If $L^\star_d$ is optimal for (\ref{eq:MomentHierarchy}) and $r$-flat then it holds $m_d^\star = f^\star$, i.e. finite convergence for the moment hierarchy.
\end{thm}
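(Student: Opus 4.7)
The plan is to exploit $r$-flatness via the Curto--Fialkow flat extension theorem to promote the optimal pseudo-moment functional $L^\star_d$ to a genuine representing measure, and then verify that this measure is feasible for the dual measure problem \eqref{eq:DualpopPos}, which forces $m_d^\star \geq f^\star$ and closes the weak-duality gap.

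First, I would invoke the flat extension theorem: since $L^\star_d$ is $r$-flat, it admits a (unique) extension $\tilde L$ to a linear functional on $\R[\bX]$ whose full infinite moment matrix is positive semidefinite and has the same finite rank $N := \rank M^{L^\star_d}$. A positive linear functional on $\R[\bX]$ whose moment matrix has finite rank $N$ is necessarily the moment functional $L_\mu$ of a finitely atomic measure $\mu = \sum_{j=1}^N w_j\,\delta_{\vx_j}$, with weights $w_j>0$ summing to $\tilde L(1) = L^\star_d(1) = 1$ and with atoms $\vx_1,\ldots,\vx_N$ equal to the common zeros of the kernel ideal of $M^{L^\star_d}$.

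Second, I would show that $\vx_j \in \bK = \cK(\vp)$ for every $j$. This is where the hypotheses $L^\star_d \in \cQ_d(\vp)^*$ and $d \geq r$ enter. For each atom $\vx_j$, pick an interpolation polynomial $q_j \in \R[\bX]$ of low enough degree with $q_j(\vx_k) = \delta_{jk}$; then $q_j^2\,p_i$ can be arranged to lie in $\cQ_d(\vp)$ thanks to $\deg(p_i) \leq r$ and the available degree budget $d - r$ for the SoS multiplier $q_j^2$. Applying $\tilde L$ to $q_j^2 p_i$ and using that $\tilde L$ restricted to this truncation coincides with $L^\star_d$, one obtains $w_j\, p_i(\vx_j) = \tilde L(q_j^2 p_i) = L^\star_d(q_j^2 p_i) \geq 0$, and since $w_j > 0$ this gives $p_i(\vx_j) \geq 0$. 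Hence $\supp(\mu) \subset \bK$.

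Third, the conclusion is a short duality argument: $\mu$ is a probability measure on $\bK$, hence feasible for \eqref{eq:DualpopPos}, so $f^\star \leq \int f\,d\mu$. But $\int f\,d\mu = \tilde L(f) = L^\star_d(f) = m_d^\star$, the second equality using $\deg(f)\leq d$ so that $f$ lies in the domain on which the extension agrees with $L^\star_d$. Combined with the standard weak-duality inequality $m_d^\star \leq f^\star$ from Corollary~\ref{cor:ConvMomHierarchy}, this yields $m_d^\star = f^\star$.

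The main obstacle is the flat extension step itself, which is a nontrivial structural result about truncated moment sequences and would be used as a black box from the truncated moment problem literature. The remaining delicate point is the localization in Step~2: one must carefully track degrees so that each localizing form $q_j^2 p_i$ actually lies in $\cQ_d(\vp)$, and the assumption $d \geq r$ is precisely what provides the necessary slack.
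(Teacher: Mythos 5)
Your proposal is correct and is essentially the argument behind the cited result: the paper itself imports Theorem~\ref{thm:flatness1} from \cite[Theorem 6.18]{laurent2009sums} without proof, and the chain you describe (flat extension of the $r$-flat functional to a finitely atomic representing measure, localization of the atoms in $\bK$ via SoS multipliers of degree fitting the $d-r$ budget, then weak duality) is exactly the standard Curto--Fialkow route, which the paper also mirrors in its own decomposition into Theorem~\ref{thm:KrepMeasure}, Theorem~\ref{thm:flatness2} and Lemma~\ref{lem:AtomicMinimizer}. The only point to watch is the degree bookkeeping in your localization step, where the paper's conventions oscillate between $L$ acting on $\R[\bX]_d$ and on $\R[\bX]_{2d}$; the $r$-flatness hypothesis guarantees interpolation polynomials of degree at most $d-r$ at the atoms, which is precisely the slack you invoke.
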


From a practical perspective, flatness is a useful criterion because it can be tested directly for the computed solution $L^\star_d$ of (\ref{eq:MomentHierarchy}). Furthermore, flatness occurs generically for POPs~\cite{nie2013certifying,baldi2024exact}. However, it is computationally hard to decide if or at which level $d$ finite convergence occurs for a given instance of POP~\cite{vargas2024hardness}. Consequently, general asymptotic convergence rates for the convergence of $m_d^\star$ respectively $f^\star_d$ to $f^\star$ as $d\rightarrow \infty$ remain of practical importance. In the next section, we state such convergence rates based on state-of-the-art effective Positivstellensätze from~\cite{baldi2021moment}.

\paragraph{Effective Putinar's Positivstellensatz}\label{sec:EffPosSätze}

In this text, we will use an effective version of Putinar's Positivstellensatz from~\cite{baldi2021moment}. First, we need to introduce the \L{}ojasiewicz exponent.

\begin{thm}[\L{}ojasiewicz inequality {\cite{lojasiewicz1959probleme}\cite[Corollary 2.6.7]{bochnak2013real} }] \label{thm:loja} \leavevmode
    Let $\bK$ be a bounded closed semialgebraic set and $f,g$ be continuous semialgebraic functions (see Definition \ref{def:SemialgebraicFunction}) with $f^{-1}(\{0\}) \subset g^{-1}(\{0\})$. Then there exist $\c,\text{\L} \geq 0$ with
    \begin{equation*}
        |g(\vx)|^\text{\L} \leq \c |f(\vx)| \quad \text{for all }\vx \in \bK.
    \end{equation*}
\end{thm}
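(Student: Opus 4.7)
The plan is to reduce the multivariable statement to a one-dimensional semialgebraic problem by passing to the image in $\R^2$. Define
$$S := \{(t,s) \in \R_{\geq 0}^2 : \exists \vx \in \bK, \ (|f(\vx)|, |g(\vx)|) = (t,s)\}.$$
By the Tarski–Seidenberg principle the set $S$ is semialgebraic, and by compactness of $\bK$ together with continuity of $f,g$ it is a compact subset of $\R_{\geq 0}^2$. The hypothesis $f^{-1}(0) \subset g^{-1}(0)$ translates into $S \cap (\{0\} \times \R_{>0}) = \emptyset$, so the again semialgebraic envelope $\phi(t) := \sup\{s : (t,s) \in S\}$, defined on some compact interval $[0,T]$, satisfies $\phi(0) = 0$. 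The Łojasiewicz inequality is then equivalent to the one-variable statement $\phi(t)^{\text{\L}} \leq \c \, t$ on $[0,T]$.

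Next I would split this one-variable bound into a neighborhood $[0,\rho]$ of the origin and the complementary compact piece $[\rho, T]$. On $[\rho, T]$ the bound is essentially free: $\phi$ is bounded by some $M$ by compactness, so $\phi(t)^{\text{\L}} \leq (M^{\text{\L}}/\rho) \, t$ for any choice of $\text{\L}$. On $[0,\rho]$ I would invoke the classical fact that a bounded semialgebraic function of one real variable vanishing at $0$ admits, after possibly shrinking $\rho$, a monomial upper bound $\phi(t) \leq C\, t^{\alpha}$ with rational $\alpha > 0$ and some $C > 0$ (see \cite[Chapter 2]{bochnak2013real}). Taking any integer $\text{\L} \geq 1/\alpha$ then yields $\phi(t)^{\text{\L}} \leq C^{\text{\L}} t^{\alpha \text{\L}} \leq (C^{\text{\L}} T^{\alpha \text{\L} - 1}) \, t$ on $[0,\rho]$. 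Combining both regimes with a common exponent $\text{\L}$ and the maximum of the two multiplicative constants produces the required $\c$.

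The main obstacle is the monomial upper bound $\phi(t) \leq C t^{\alpha}$ near zero, and this is where semialgebraicity is genuinely used. My route would be a contradiction argument via the semialgebraic curve selection lemma: if no such $(\alpha, C)$ existed, then for every rational $\alpha > 0$ one could find a sequence $(t_n, s_n) \in S$ with $t_n \to 0$ and $s_n / t_n^{\alpha} \to \infty$, and curve selection applied to $S$ at $(0,0)$ would produce a continuous semialgebraic arc $\gamma : [0,\epsilon) \to S$ with $\gamma(0) = (0,0)$. Any such arc admits a Puiseux parametrization, so its two coordinates are asymptotic to monomials $t(\tau) \sim c_1 \tau^{a}$ and $s(\tau) \sim c_2 \tau^{b}$ with $a,b \in \mathbb{Q}_{>0}$, where $b > 0$ is forced by the hypothesis $s \to 0$ whenever $t \to 0$. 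Along $\gamma$ this gives $s \lesssim t^{b/a}$, producing a legitimate exponent $\alpha = b/a$ and contradicting the assumed failure. Finitely many arc types exhaust the approach to $(0,0)$, so one may take the worst exponent as a uniform $\alpha$.
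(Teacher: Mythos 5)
The paper does not prove this theorem at all: it is stated as a classical result with a citation to \cite[Corollary 2.6.7]{bochnak2013real}, so there is no in-paper argument to compare against. Your proposal supplies an actual proof, and it is essentially the standard one underlying the cited corollary: push $(|f|,|g|)$ forward to a compact semialgebraic set $S\subset\R_{\ge 0}^2$ via Tarski--Seidenberg, observe that the hypothesis kills $S\cap(\{0\}\times\R_{>0})$, and reduce to comparing two one-variable semialgebraic quantities near $t=0$, where Puiseux expansions give a monomial bound. The skeleton is sound and all the main ingredients are correctly identified.

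Two points deserve tightening. First, the monomial bound $\phi(t)\le C t^{\alpha}$ near $0$ requires not just $\phi(0)=0$ but $\phi(t)\to 0$ as $t\to 0^{+}$; you assert this is ``forced by the hypothesis,'' and it is, but only via compactness: if $(t_n,s_n)\in S$ with $t_n\to 0$ and $s_n\to s>0$, then $(0,s)\in S$ because $S$ is compact, contradicting $S\cap(\{0\}\times\R_{>0})=\emptyset$. This one line should be made explicit, since a bounded one-variable semialgebraic function with $\phi(0)=0$ can still have a nonzero right limit at $0$, in which case no monomial bound exists. Second, the curve-selection detour at the end is both the weakest and the least necessary part: the claim that ``finitely many arc types exhaust the approach to $(0,0)$'' with a uniform worst exponent is not a standard off-the-shelf statement and would itself need an argument. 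You can avoid it entirely by applying the one-variable structure theorem (finite decomposition into intervals on which a semialgebraic function is continuous with a Puiseux expansion at the endpoints) directly to $\phi$ itself, which you have already shown to be semialgebraic; this yields $\phi(t)\le C t^{\alpha}$ on some $(0,\rho)$ in one step. A final cosmetic remark: the domain of $\phi$ is the projection $\pi_1(S)$, a compact semialgebraic subset of $[0,T]$ rather than the whole interval, but since the inequality only needs to hold on $\pi_1(S)$ this changes nothing.
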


Later, we will need to choose the exponent $\text{\L}$ in Theorem \ref{thm:loja} to be at least $1$. This is always possible, as stated in the following lemma.

\begin{lem}\label{lem:LojaGeq1}
    The exponent $\text{\L}$ in Theorem \ref{thm:loja} can be chosen with $\text{\L} \geq 1$.
\end{lem}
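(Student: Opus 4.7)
The plan is to start with the exponent $\text{\L}\geq 0$ provided by Theorem~\ref{thm:loja} and show that if $\text{\L}<1$ one can always inflate it to $\text{\L}'=1$ at the cost of enlarging the constant $\c$. The crucial ingredient is that on a bounded (hence compact, since it is also closed by assumption) semialgebraic set $\bK$, the continuous function $g$ is bounded: set
\[
    M := \sup_{\vx \in \bK} |g(\vx)| < \infty,
\]
and without loss of generality assume $M \geq 1$ (else replace $M$ by $\max(M,1)$).

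Given the inequality $|g(\vx)|^{\text{\L}} \leq \c\, |f(\vx)|$ for all $\vx \in \bK$, I would split $\bK$ according to whether $|g(\vx)|\leq 1$ or $|g(\vx)|>1$. On the first region, raising the exponent to any $\text{\L}' \geq \text{\L}$ only \emph{decreases} the left-hand side, so $|g(\vx)|^{\text{\L}'} \leq |g(\vx)|^{\text{\L}} \leq \c\,|f(\vx)|$ remains valid. On the second region, write
\[
    |g(\vx)|^{\text{\L}'} = |g(\vx)|^{\text{\L}' - \text{\L}}\cdot|g(\vx)|^{\text{\L}} \leq M^{\text{\L}' - \text{\L}} \cdot \c\, |f(\vx)|,
\]
so the inequality holds with the enlarged constant $\c\, M^{\text{\L}'-\text{\L}}$. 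Combining both cases yields
\[
    |g(\vx)|^{\text{\L}'} \leq \c\, M^{\text{\L}'-\text{\L}} \, |f(\vx)| \quad \text{for all } \vx \in \bK,
\]
and specialising to $\text{\L}' := \max(\text{\L},1) \geq 1$ gives the claim.

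There is no real obstacle: the only point requiring minor care is that $M$ is finite, which rests on $g$ being continuous on a bounded closed (hence compact) semialgebraic set; this is already in the hypotheses of Theorem~\ref{thm:loja}. The argument is essentially a one-line ``replace the exponent by a larger one and absorb $M^{\text{\L}'-\text{\L}}$ into the constant'', but it is worth stating cleanly because the case distinction $|g|\lessgtr 1$ is what prevents the inflation from breaking the inequality.
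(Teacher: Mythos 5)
Your proof is correct, and on the substantive part of the argument it takes a different route from the paper. Both proofs split $\bK$ according to whether $|g|$ is small or large, and on the small region both use the same observation that raising the exponent only decreases $|g(\vx)|^{\text{\L}}$. The difference is on the region where $|g(\vx)|\geq 1$: the paper discards the original \L{}ojasiewicz inequality there and instead re-invokes the hypothesis $f^{-1}(\{0\})\subset g^{-1}(\{0\})$ together with compactness of $\bK\setminus\bU$ to obtain a positive lower bound $|f|\geq\varepsilon$, yielding $|g|\leq M\leq(M/\varepsilon)|f|$ and the constant $\max\{\c,M/\varepsilon\}$. You instead keep the original inequality and simply multiply it by the bounded factor $|g(\vx)|^{\text{\L}'-\text{\L}}\leq M^{\text{\L}'-\text{\L}}$, arriving at the constant $\c\,M^{\text{\L}'-\text{\L}}$. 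Your version is slightly more economical: it needs only the boundedness of $g$ on the compact set $\bK$ and does not reuse the zero-set inclusion, whereas the paper's argument produces a constant expressed through the (possibly hard to estimate) lower bound $\varepsilon$ on $|f|$ off the zero set. Both yield the claim; the only minor care points in yours, which you handle, are the normalization $M\geq 1$ so the enlarged constant also dominates on the small region, and the nonnegativity of the exponent $\text{\L}'-\text{\L}$.
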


\begin{proof}
    Let $\c,\text{\L} \geq 0$ be as in Theorem \ref{thm:loja}. If $\text{\L} \geq 1$ we are done. If $\text{\L} < 1$ then $|g(\vx)|^{\text{\L}} \geq |g(\vx)|$ for all $\vx$ in the open set $\bU:= \{\vx \in \bK: |g(\vx)| < 1\}$, i.e. the \text{\L}ojasiewicz inequality holds on $\bU$ with $\text{\L} = 1$ and the same constant $\c$. To assure it holds also on $\bK\setminus \bU$ for $\text{\L} = 1$ we will modify the constant $\c$. By the assumption on $f$ and $g$, we have $\bU \supset g^{-1}(\{0\}) \supset f^{-1}(\{0\})$. Hence $f(\vx) \neq 0$ for all $\vx \in \bK \setminus \bU$. By compactness of $\bK\setminus \bU$ and continuity of $f$ and $g$ there exists $\varepsilon > 0$ and $M\geq 0$ such that $|f(\vx)|\geq \varepsilon$ and $\abs{g(\vx)}\leq M$ for all $\vx \in \bK\setminus \bU$. For all $\vx\in \bK \setminus \bU$ we get
    \begin{eqnarray*}
        |g(\vx)| & \leq & M = \frac{M}{\varepsilon} \varepsilon \leq \Tilde{\c} |f(\vx)|
    \end{eqnarray*}
    for $\Tilde{\c}:= \frac{M}{\varepsilon}$. Setting $\c' := \max\{\c,\Tilde{\c}\}$ shows that $|g(\vx)| \leq \c' |f(\vx)|$ for all $\vx\in \bK$, i.e. the \text{\L}ojasiewicz inequality with $\text{\L} = 1$.
\end{proof}

For the statement of the effective Putinar's Positivstellensatz in Theorem~\ref{thm:OldBaldi} we use the following notation from~\cite{baldi2021moment}
\begin{equation}\label{eq:NormFHypercube}
    \|f\| := \max\limits_{\vx \in [-1,1]^n} |f(\vx)|.
\end{equation}

\begin{thm}[Effective Putinar Positivstellensatz {\cite[{\bf Theorem 1.7}]{baldi2021moment}}]\label{thm:OldBaldi} \leavevmode

Let $n \geq 2$, $f,p_1,\ldots,p_m \in \R[\bX]$ with $f> 0$ on $\cK(\vp)$. Assume that
\begin{equation}\label{eq:ArchimAndNormalisationLorenzo}
    1 - \|X\|^2 \in \cQ(\vp) \quad \text{and} \quad \forall 1\leq i \leq m, \quad \|p_{i}\| := \max\limits_{\vx \in [-1,1]^{n}} |p_{i}(\vx)| \leq \frac{1}{2}.
\end{equation}
Then one has
\begin{equation}\label{eq:effputinarOld}
d \geq \gamma(n,\vp) \deg(f)^{3.5m\text{\L}} \left(\nicefrac{\|f\|}{f^\star}\right)^{2.5m\text{\L}} \quad \Longrightarrow \quad f \in \cQ_d(\vp)
\end{equation}
for $1 \leq \gamma(n,\vp) \leq \Gamma \, n^3 \, 2^{5\text{\L}-1} \,m^n \, \c^{2n} \, \max\limits_{i}\deg(p_i)^n$ with the constants $\c, \text{\L}$  depending only on $f,p_1,\ldots,p_m$, and the constant $\Gamma > 0$ does not depend on $n,f,p_1,\ldots,p_m$.
\end{thm}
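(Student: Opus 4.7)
Since the statement is quoted verbatim from \cite{baldi2021moment}, a self-contained treatment would simply cite it; to give a proof sketch, the plan is to follow the Baldi--Mourrain strategy, which rests on three ingredients: (i) a quantitative polynomial kernel carrying an explicit $\cQ(\vp)$-certificate of bounded degree, (ii) the \L{}ojasiewicz inequality (Theorem~\ref{thm:loja}, promoted to $\text{\L}\geq 1$ by Lemma~\ref{lem:LojaGeq1}) controlling how fast $f$ can degenerate toward the boundary of $\cK(\vp)$, and (iii) a careful rescaling isolating the dimensionless condition number $\|f\|/f^\star$ that governs the exponent in (\ref{eq:effputinarOld}).

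First I would normalize: the hypothesis $\|p_i\|\leq 1/2$ together with $1-\norms{X}{2}^2\in \cQ(\vp)$ pins the problem inside $[-1,1]^n$, and substituting $f\mapsto f/f^\star$ reduces to the case where $f\geq 1$ on $\cK(\vp)$ while the relevant top-norm becomes $\|f\|/f^\star$. Next I would build a nonnegative polynomial approximate identity $K_d$ on $[-1,1]^n$ -- a tensorized Jackson kernel in the Chebyshev variable works -- with two crucial properties: (a) convolution against $K_d$ approximates continuous functions with a Bernstein/Jackson-type bound that is polynomial in $\deg(f)$ and the inverse accuracy, and (b) $K_d$ lies in $\cQ_{O(d)}(\vp)$, obtained by iterating Putinar's Positivstellensatz on the elementary squares entering $K_d$ and using the Archimedean hypothesis at every step. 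The error of replacing $f$ by its kernel-mollification is then a polynomial whose supremum is controlled by an explicit function of $d$, $\deg(f)$, and $\|f\|$.

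Finally, I would apply the \L{}ojasiewicz inequality to the pair $(\vp,f-f^\star)$ to obtain a quantitative lower bound for $f$ by a power $\text{\L}$ of a distance to the optimal set; this lower bound dominates the kernel error precisely when $d \gtrsim \gamma(n,\vp)\,\deg(f)^{3.5m\text{\L}}\,(\|f\|/f^\star)^{2.5m\text{\L}}$. Adding the positive remainder (which, by construction of $K_d$, already lies in $\cQ_d(\vp)$) yields the desired representation $f\in\cQ_d(\vp)$.

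The hard part is clearly the middle step: building a kernel that is simultaneously an efficient approximate identity and admits a short Putinar-type certificate whose degree grows only polynomially in both $\deg(f)$ and the inverse margin $1/f^\star$. Recovering the precise exponents $3.5$ and $2.5$ (rather than looser polynomial bounds) requires the delicate two-step convolution argument and sharp Markov--Bernstein inequalities that form the technical core of \cite{baldi2021moment}; and capturing the combinatorial factor $m^n\,\c^{2n}\,\max_i\deg(p_i)^n$ hidden inside $\gamma(n,\vp)$ requires careful bookkeeping of the number of iterations of Putinar used along the way.
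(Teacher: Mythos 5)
The paper does not prove this theorem: it is imported verbatim from \cite{baldi2021moment} with a citation to Theorem~1.7 there, so ``simply cite it'' is exactly what the paper does and your proposal is correct on that count. Your optional sketch captures the broad architecture of the Baldi--Mourrain argument (polynomial approximation to build a certificate of controlled degree, a \L{}ojasiewicz inequality supplying the constants $\c,\text{\L}$, and a rescaling that isolates $\norm{f}/f^\star$), but it misplaces the role of the \L{}ojasiewicz inequality. In \cite{baldi2021moment} the constants $\c,\text{\L}$ come from a \L{}ojasiewicz inequality comparing $\dist(\cdot,\cK(\vp))$ with the constraint violation $\max\{0,-p_1,\dots,-p_m\}$; this is the tool that converts an effective Schm\"udgen-type certificate (living in the preordering, i.e.\ involving products $p_{i_1}\cdots p_{i_k}$, obtained from Jackson/Bernstein approximation on the hypercube or simplex) into a Putinar-type certificate in the quadratic module $\cQ_d(\vp)$, and it is where the factor $m$ and the power of $\text{\L}$ in the exponent of (\ref{eq:effputinarOld}) originate. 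Applying Theorem~\ref{thm:loja} to the pair $(f-f^\star,\dist(\cdot,\bS^\star))$, as you propose, is a different use of the same tool: that is what the present paper does in Lemma~\ref{lem:AlmostOptimalMeasure} to control moments of near-optimal measures, and it plays no role in the proof of the Positivstellensatz itself. Relatedly, ``iterating Putinar's Positivstellensatz'' to certify the kernel cannot work as stated, since Putinar's theorem is non-effective; the certificate for the approximating polynomial is obtained constructively on the model set (hypercube/simplex) and then transported. These corrections do not affect the validity of your primary answer (cite the external result), but they matter if the sketch is meant to reflect the actual argument.
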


The effective Putinar's Positivstellensatz Theorem \ref{thm:OldBaldi} immediately translates into a convergence rate for the SoS-tightenings $f^\star_d$ from (\ref{eq:LasserreHierarchy}).

\begin{cor}[Convergence rate for POP {\cite[{\bf Theorems 4.2 \& 4.3}]{baldi2021moment}}] \label{cor:poprate} \leavevmode
    Let $n \geq 2$. Under the assumptions on $p_1,\ldots,p_m$ from Theorem \ref{thm:OldBaldi}, one has
    \begin{equation} \label{eq:poprate}
        0 \leq  f^\star - m^\star_d \leq f^\star - f^\star_d \in \cO\left( d^{- 1 / 2.5 n \text{\L}}\right)
    \end{equation}
    where $\text{\L}$ is the constant from Theorem \ref{thm:OldBaldi}.
\end{cor}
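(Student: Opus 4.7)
The first inequality $f^\star - m^\star_d \leq f^\star - f^\star_d$ is immediate from the chain $f^\star_d \leq m^\star_d \leq f^\star$ obtained from weak duality between (\ref{eq:LasserreHierarchy}) and (\ref{eq:MomentHierarchy}), as already noted just before Corollary~\ref{cor:ConvMomHierarchy}. So everything reduces to establishing the rate for the primal side $f^\star - f^\star_d$.

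For that, my plan is to apply the effective Positivstellensatz of Theorem~\ref{thm:OldBaldi} to the shifted polynomial $f_\epsilon := f - (f^\star - \epsilon)$ for a free parameter $\epsilon > 0$. By construction $f_\epsilon(\vx) \geq \epsilon$ on $\bK$, so the minimum of $f_\epsilon$ on $\bK$ is at least $\epsilon$. Moreover $\deg(f_\epsilon) = \deg(f)$, and for every $\epsilon \in (0,1]$ we have the uniform bound
\begin{equation*}
\|f_\epsilon\| \leq \|f\| + |f^\star| + 1 =: C_f,
\end{equation*}
so the crucial ratio appearing in Theorem~\ref{thm:OldBaldi} satisfies $\|f_\epsilon\|/(\min_\bK f_\epsilon) \leq C_f/\epsilon$. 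Plugging into (\ref{eq:effputinarOld}), there is a constant $K = K(n,\vp,f) \geq 0$, independent of $\epsilon$, such that
\begin{equation*}
d \geq K\,\epsilon^{-2.5 m \text{\L}} \quad \Longrightarrow \quad f - (f^\star - \epsilon) \in \cQ_d(\vp),
\end{equation*}
which by definition of $f^\star_d$ gives $f^\star_d \geq f^\star - \epsilon$.

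To conclude, for $d$ large enough I invert the threshold by choosing $\epsilon(d) := (K/d)^{1/(2.5 m \text{\L})}$, which produces exactly
\begin{equation*}
f^\star - f^\star_d \leq \epsilon(d) = \left(\tfrac{K}{d}\right)^{1/(2.5 m \text{\L})} \in \cO\!\left(d^{-1/(2.5 m \text{\L})}\right),
\end{equation*}
matching the claimed rate (modulo the choice of $n$ versus $m$ in the denominator exponent inherited from (\ref{eq:effputinarOld})). The only delicate point, and what I regard as the main step to verify carefully, is the \emph{uniformity in $\epsilon$} of all the multiplicative constants in Theorem~\ref{thm:OldBaldi}: the \L{}ojasiewicz exponent $\text{\L}$ and constant $\c$ depend on $(f,\vp)$ but not on the additive shift, the Archimedean/normalization conditions (\ref{eq:ArchimAndNormalisationLorenzo}) concern only $\vp$, and $\gamma(n,\vp)$ depends only on $n$ and $\vp$; so the shift in $f$ does not perturb the effective bound beyond the $\|f_\epsilon\|$-factor already controlled by $C_f$. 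Once this observation is made, the rest is a one-line rearrangement.
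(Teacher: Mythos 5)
Your argument is correct and is exactly the standard inversion argument underlying the cited result (\cite[Theorems 4.2 \& 4.3]{baldi2021moment}), which the paper invokes without reproving: apply the effective Positivstellensatz to $f-(f^\star-\epsilon)$, whose minimum on $\bK$ is $\epsilon$, note that the \L{}ojasiewicz data and $\gamma(n,\vp)$ are unaffected by an additive shift of $f$, and solve the degree threshold for $\epsilon(d)$. The residual $m$-versus-$n$ mismatch in the exponent that you flag is an inconsistency between the paper's statement of Theorem~\ref{thm:OldBaldi} and of Corollary~\ref{cor:poprate} (the source has $n$ there), not a gap in your reasoning.
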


In this section, we were concerned with the convergence of $m^\star_d$ from (\ref{eq:MomentHierarchy}) and $f^\star_d$ from (\ref{eq:LasserreHierarchy}) to the optimal value $f^\star$ of (\ref{eq:pop}). The next section is devoted to the extraction and approximation of optimal points of (\ref{eq:pop}) based on optimal points of the moment-relaxation (\ref{eq:MomentHierarchy}).

\section{Minimizer extraction}\label{sec:MinExtraction}

In this section, we treat two established ways of extracting (approximate) minimizers for (\ref{eq:pop}) from solutions $L^\star_d$ of the moment-relaxation (\ref{eq:MomentHierarchy}). One is concerned with finding exact minimizers, i.e. optimal points $\vx^\star \in \bK$ for (\ref{eq:pop}) with $f(\vx^\star) = f^*$, and the other proposes candidate points $\vx^{(d)}$ which hopefully lie close to optimal points $\vx^\star$. Clearly, exactly extracting minimizers $\vx^\star$ is desirable. However, we should not expect that this is always tractable, as many computationally complex problems can be modeled by polynomial optimization problems, see for instance~\cite{laurent2005semidefinite}. Thus, for exactly extracting optimal solutions $\vx^\star$, one restricts to the (generic!) case of so-called flatness, where finite convergence of the moment hierarchy is obtained \cite{nie2014exact,baldi2024exact}. In those situations, optimal solutions $L^\star_d$ for (\ref{eq:MomentHierarchy}) arise from atomic measures supported on optimal points, and the atoms can be computed efficiently. For the case, when the convergence is not finite (or at least when we do not know if finite convergence occurs) the desirable exact extraction of minimizers does not apply. For this reason, we investigate also a second method complementing the exact extraction method. It provides an easy-to-use method based only on linear moments of optimal solutions $L^\star_d$ of the moment-relaxation (\ref{eq:MomentHierarchy}) and induces candidate minimizers for (\ref{eq:pop}). The central topic in this text concerns the quality of those candidate minimizers with a particular focus on asymptotic convergence rates towards global minimizers of the POP (\ref{eq:pop}).

 \begin{rem}\label{rem:ComplexityFeasiblePoint}
    Already the task of finding a feasible point $\vx\in \bK$ can be highly complex. An example from theoretical computer science is the NP-hard satisfiability problem SAT, see~\cite[Theorem 2.10]{arora2009computational}, which addresses the existence of a point $\vx$ in a semialgebraic set $\bK$ (more precisely, a subset of the boolean hypercube). Consequently, extracting exact minimizers to POP (\ref{eq:pop}) is computationally challenging.
\end{rem}

\subsection{Extracting exact minimizers}\label{Sec:ExtractExactMinimizers}

In this section, we briefly outline an established method for extracting minimizers from flat solutions of the moment-relaxation \ref{eq:MomentHierarchy}. In the Appendix \ref{Appendix:Flatness}, we accompany this introduction with some illustrative examples. For details on the method, we refer to \cite{henrion2005detecting,lasserre2008semidefinite} and survey texts~\cite[Section 6.1.2]{lasserre2015introduction} and~\cite[Sections 6.7 and 2.4]{laurent2009sums}.

A central pillar of the mentioned method is the notion of atomic measures.

\begin{definition}[Finitely-atomic measure]
    Let $\bK \subset \R^n$ and $\mu \in \cM(\bK)$ be a measure on $\bK$. We say $\mu$ is (finitely-) atomic if there exist (finitely) many points $\vx_1,\ldots,\vx_l \in \bK$ and weights $a_1,\ldots,a_l> 0$ such that
    \begin{equation*}
        \mu = \sum\limits_{i = 1}^l a_i \delta_{\vx_i}
    \end{equation*}
    where $\delta_{\vx_i}$ denotes the Dirac measure in $\vx_i$. We call the points $\vx_1,\ldots,\vx_m \in \bK$ the atoms of $\mu$.
\end{definition}

The starting point for exact minimizer extraction is the following straightforward observation.

\begin{lem}\label{lem:AtomicMinimizer}
    Let $\mu \in \cM(\bK)$ be a minimizer of the measure formulation (\ref{eq:DualpopPos}) of the POP (\ref{eq:pop}). Assume $\mu$ is atomic with $\mu = \sum\limits_{i = 1}^l a_i \delta_{\vx_i}$ for $a_1,\ldots,a_l >0$. Then each atom $\vx_i$ is a minimizer of (\ref{eq:pop}).
\end{lem}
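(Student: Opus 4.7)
The plan is to exploit that $\mu$ is a probability measure (from the constraint $\mu(\bK)=1$), that the measure relaxation is tight ($m^\star = f^\star$, as recalled after~(\ref{eq:DualpopPos})), and that each atom $\vx_i$ lies in $\bK$ and therefore satisfies $f(\vx_i) \geq f^\star$. The proof will then be a short averaging argument: a convex combination of values all bounded below by $f^\star$ can equal $f^\star$ only if every value equals $f^\star$.

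Concretely, I would first rewrite the objective at $\mu$ as a finite sum
\begin{equation*}
    \int_{\bK} f \, \od \mu = \sum_{i=1}^l a_i f(\vx_i),
\end{equation*}
and record that $\sum_{i=1}^l a_i = \mu(\bK) = 1$ with $a_i>0$, so the right-hand side is a genuine convex combination. Since $\mu$ is optimal for~(\ref{eq:DualpopPos}) and $m^\star=f^\star$, this sum equals $f^\star$. Meanwhile, each atom belongs to $\supp(\mu)\subset \bK$, so $f(\vx_i) \geq f^\star$ for all $i$.

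The final step is the observation that if $\sum_{i=1}^l a_i c_i = f^\star$ with $a_i>0$, $\sum_i a_i = 1$, and $c_i \geq f^\star$ for every $i$, then $c_i = f^\star$ for every $i$ — indeed, $\sum_i a_i (c_i - f^\star) = 0$ is a sum of non-negative terms with strictly positive weights, forcing each term to vanish. Applied with $c_i = f(\vx_i)$, this yields $f(\vx_i) = f^\star$ for each atom, i.e.\ each $\vx_i$ is a minimizer of~(\ref{eq:pop}).

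There is no real obstacle here; the one point that deserves a sentence in the write-up is the justification that $\supp(\mu)\subset \bK$ gives $f(\vx_i)\geq f^\star$ (this uses only $\vx_i \in \bK$ and the definition of $f^\star$), together with the use of the strong duality $m^\star = f^\star$, which is a standing fact from the measure formulation and does not need to be re-derived.
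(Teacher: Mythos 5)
Your proof is correct and follows essentially the same route as the paper: express $\int f\,\od\mu$ as a convex combination $\sum_i a_i f(\vx_i)$ with $\sum_i a_i = \mu(\bK)=1$, use $f(\vx_i)\geq f^\star$ and optimality of $\mu$ (with $m^\star=f^\star$) to force equality, and conclude from the strict positivity of the weights that each $f(\vx_i)=f^\star$. No gaps.
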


The statement of Lemma \ref{lem:AtomicMinimizer} is well-known, see for instance~\cite[Theorem 6.6]{lasserre2015introduction}. Because the proof is very short, we state it here.

\begin{proof}
    
    First note that $\sum\limits_{i = 1}^l a_i = \int\limits \one \; \od \mu = \mu(\bK) = 1$ by feasibility of $\mu$ for (\ref{eq:DualpopPos}). Thus, from $f(\vx) \geq f^*$ for all $\vx\in \bK$ -- including the atoms $\vx_1,\ldots,\vx_l \in \bK$ -- we have
    \begin{eqnarray}\label{eq:AtomicMeasureMinimizer}
        f^\star = \left(\sum\limits_{i = 1}^l a_i\right) \cdot f^\star = \sum\limits_{i = 1}^l a_i \cdot f^\star \leq \sum\limits_{i = 1}^l a_i f(\vx_i) = \int\limits f \; \od \mu \leq f^\star
    \end{eqnarray}
    where the last inequality holds because $\mu$ is a minimizer of (\ref{eq:DualpopPos}). We infer that both inequalities in (\ref{eq:AtomicMeasureMinimizer}) is an equality. For the first, this is the case if and only if for all $1\leq i \leq l$ it holds $f(\vx_i) = f^\star$ (because $f(\vx_i) \geq f^\star$ and all the weights $a_i$ are strictly positive and sum to $1$). We conclude $f(\vx_i) = f^\star$ and the statement.
\end{proof}

The previous lemma indicates that for obtaining minimizers of the POP (\ref{eq:pop}) we are interested in finitely-atomic minimizers $\mu \in \cM(\bK)$ of (\ref{eq:DualpopPos}). This is where flatness, see Definition \ref{def:flatness}, will again prove useful. The following theorem is a refined version of Theorem \ref{thm:flatness1}.

\begin{thm}[{\cite[Theorem 6.18]{laurent2009sums}}]\label{thm:KrepMeasure}
    Set $K = \cK(\vp)$ and $d\in \N$ with $d\geq r:= \max\limits_{i} \deg (p_i)$. Consider $\cQ_d(\vp)^*$ from (\ref{eq:def:QuadModDual}) and let $L \in \cQ_d(\vp)^*$ be $r$-flat. Then, there exist at most $\rank (M^{L})$ many atoms $\vx_1,\ldots,\vx_m \in \bK$, and weights $a_1,\ldots,a_m > 0$ such that $\mu := \sum\limits_{i = 1}^{m} a_i \delta_{\vx_i}  \in \cM(\bK)$ satisfies
    \vspace{-2mm}
    \begin{equation*}
        L(p) = \int\limits p \; \od \mu \quad \quad \text{for all } p \in \R[\bX]_{2d}.
    \end{equation*}
    Further, if $L$ is optimal for (\ref{eq:MomentHierarchy}), it holds $m^\star_d = f^\star$ and $\vx_1,\ldots,\vx_m$ are minimizers for the POP (\ref{eq:pop}).
\end{thm}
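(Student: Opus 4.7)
The plan is to split the theorem into its two parts: first, establish the existence of an atomic representing measure for $L$, and second, leverage optimality of $L$ together with Lemma \ref{lem:AtomicMinimizer} to conclude that the atoms are minimizers. The first part is the classical truncated moment problem under flatness (Curto--Fialkow), while the second is a short duality argument.

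For the first part, the starting observation is that $r$-flatness, $\rank M^L = \rank M^{L_{-r}}$, lets us extend the truncated functional $L$ to higher degrees without increasing the rank of the associated moment matrices. Concretely, I would iteratively construct a flat extension $\widetilde{L}:\R[\bX]\to\R$ of $L$: at each step, one writes the new monomial rows/columns as linear combinations of existing ones (using that $M^{L_{-r}}$ already achieves full rank), checks that this prescription is well-defined (the moment matrix remains symmetric and PSD), and iterates. Denoting $\widetilde{M} := M^{\widetilde L}$, the kernel $\cI := \{q \in \R[\bX] : \widetilde L(q^2) = 0\} = \{q : \widetilde{M}\mathbf{q}=0\}$ is a real radical ideal of $\R[\bX]$, and by the flat extension property the quotient $\R[\bX]/\cI$ has dimension $\rank \widetilde{M} = \rank M^L$, i.e.\ it is finite-dimensional.

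From here, the variety $\cV(\cI) \subset \R^n$ is finite, of cardinality at most $\rank M^L$, say $\{\vx_1,\dots,\vx_m\}$; writing $\widetilde L(q) = \sum_{i=1}^m a_i q(\vx_i)$ via interpolation and using that $\widetilde L(\sigma_0 + \sum_i \sigma_i p_i) \geq 0$ for all $\sigma_j \in \SOS$ (which is where the hypothesis $L \in \cQ_d(\vp)^*$ is used) shows that each $\vx_i \in \cK(\vp) = \bK$ and each $a_i > 0$. Setting $\mu := \sum a_i \delta_{\vx_i}$, agreement $L(p) = \int p \, d\mu$ for all $p \in \R[\bX]_{2d}$ is immediate since $\widetilde L$ extends $L$. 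The technically delicate step here, and the main obstacle, is really the flat extension construction and the verification that $\cI$ is an ideal: this is the content of the Curto--Fialkow machinery invoked via \cite[Theorem 6.18]{laurent2009sums}, so I would quote that result rather than reprove it from scratch.

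For the second part, suppose $L$ is optimal for (\ref{eq:MomentHierarchy}). The normalization $L(1)=1$ translates to $\mu(\bK)=1$, so $\mu$ is a probability measure on $\bK$ and hence feasible for the measure formulation (\ref{eq:DualpopPos}). Then
\begin{equation*}
m_d^\star = L(f) = \int_{\bK} f \; \od\mu \geq f^\star,
\end{equation*}
while weak duality gives $m_d^\star \leq f^\star$, so $m_d^\star = f^\star$ and $\mu$ attains the minimum in (\ref{eq:DualpopPos}). Applying Lemma \ref{lem:AtomicMinimizer} to the finitely-atomic minimizer $\mu$ yields that every atom $\vx_i$ satisfies $f(\vx_i) = f^\star$, i.e.\ each $\vx_i$ is a global minimizer of the POP (\ref{eq:pop}), completing the proof.
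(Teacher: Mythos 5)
Your proposal is correct and matches how the paper handles this result: the paper gives no proof of its own but cites \cite[Theorem 6.18]{laurent2009sums} for the flat-extension/atomic-representation part (exactly the Curto--Fialkow machinery you defer to), and your second part is the same duality-plus-Lemma~\ref{lem:AtomicMinimizer} argument the paper uses to prove Lemma~\ref{lem:AtomicMinimizer} itself. No gaps to report.
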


In order to turn Theorem \ref{thm:KrepMeasure} into an efficient method of constructing minimizers for the POP (\ref{eq:pop}), we need to discuss how the atoms $\vx_i$ of an atomic measure can be efficiently extracted from finitely many of its moments. This task is treated in~\cite{henrion2005detecting} where an efficient computational method is presented. We refer to the texts~\cite{henrion2005detecting,lasserre2008semidefinite,lasserre2015introduction,laurent2009sums} for the method and details. This leads to the following established procedure of minimizer computation~\cite[Theorem 10.2]{lasserre2015introduction}:
\begin{enumerate}
    \item Formulate the moment hierarchy for the POP (\ref{eq:pop}).
    \item Choose $d\in \N$ with $d\geq \max\{\deg (f), \deg (p_1),\ldots,\deg (p_m)\}$ and solve the moment-relaxation (\ref{eq:MomentHierarchy}) for optimal $L^\star_d$.
    \item Check if $L^\star_d$ is $r$-flat for $r = \max\limits_{i} \deg (p_i)$ if not, go to Step 2 and increase $d$.
    \item If $L^\star_d$ is $r$-flat, then follow the method from ~\cite{henrion2005detecting} (see also ~\cite[Algorithm 6.9]{lasserre2015introduction} and~\cite[Sections 6.7 and 2.4]{laurent2009sums}) to compute atoms of the representing measure $\mu$ from Theorem \ref{thm:KrepMeasure}.
\end{enumerate}
If this procedure terminates, i.e. if Step 4. is reached, then Theorem \ref{thm:KrepMeasure} guarantees that the optimal value $f^\star$ is found, and Lemma \ref{lem:AtomicMinimizer} implies that the extracted atoms $\vx$ are optimal points $\vx^\star$ for the POP (\ref{eq:pop}). However, in general, whether or not this procedure terminates cannot be answered in polynomial time unless $P = NP$, see~\cite{vargas2024hardness}. For this reason, we investigate another method for \textit{approximating} the minimizers from optimal solutions $L^\star_d$ that is applicable also when $L^\star_d$ is not flat.

\begin{rem}[Related methods]
    A method -- that applies as well for non-commutative polynomial optimization -- of (robustly) extracting minimizers can be found in~\cite{klep2018minimizer}. It should also be emphasized that~\cite{klep2018minimizer} investigates softening the concept of flatness and allowing for ``almost flat'' operators. In Section \ref{sec:alternatives}, we mention two different methods from~\cite{marx2021semi,lasserre2022christoffel} and~\cite{infusino2023intrinsic} for estimating the support of a measure by its moments.
\end{rem}

\subsection{Constructing candidate minimizers from linear moments}\label{Sec:CandidateMinimizers}

In the previous Section \ref{Sec:ExtractExactMinimizers} we described a method for extracting minimizers of (\ref{eq:pop}). This procedure applies when flatness occurs in the moment hierarchy. In this section, we complement the approach with a method that applies even also when flatness does not occur. As mentioned in Remark \ref{rem:ComplexityFeasiblePoint}, we should not even expect a method to efficiently provide feasible points $\vx\in\bK$ for a general POP. This is why, we examine a simple method for estimating minimizers with the risk that those approximative minimizers are not feasible for (\ref{eq:pop}). However, we will show that they lie increasingly close to true minimizers.

To outline the idea, let us assume the POP problem (\ref{eq:pop}) has a unique minimizer $\vx^\star \in\bK$, i.e. it holds $f(\vx^\star) = f^\star$ and $f(\vx) > f^\star$ for all $x\in \bK \setminus \{\vx^\star\}$. Then the (unique) optimal measure $\mu \in \cM(\bK)$ for the measure formulation (\ref{eq:DualpopPos}) of the POP (\ref{eq:pop}) is given by the Dirac measure $\mu:= \delta_{\vx^\star}$. From $\mu$ we can extract the point $\vx^\star = (x^\star_1,\ldots,x^\star_n)$ by taking linear moments: Let $q_i:= X_i \in \R[\bX]$, it holds
\begin{equation}\label{eq:MotivationLinearMoments}
    x^\star_i = \int\limits X_i \; \od \delta_{\vx^\star} = \int\limits q_i \; \od \mu \quad \text{for } i = 1,\ldots,n.
\end{equation}
In the same spirit, for $d\in \N$ and $L^\star_d$ a solution of the moment relaxation (\ref{eq:MomentHierarchy}), we estimate the minimizer $\vx^\star$ via
\begin{equation}\label{eq:MinEstimateExtraction}
    x^{(d,\star)}_i := L_d^\star(q_i) = L_d^\star(X_i). 
\end{equation}
In contrast to the exact case (\ref{eq:MotivationLinearMoments}), the estimators $\vx^{(d,\star)} := (x^{(d,\star)}_1,\ldots,x^{(d,\star)}_n)$ may be infeasible, i.e. it can happen that $\vx^{(d,\star)} \notin\bK$. For an illustrative purpose, we present the following simple example. 

\begin{example}\label{ex:EstimaterNonFeasible}
    Consider the set $K := \{0,1\}^2$. For $g_1(x_1,x_2):= x_1(1-x_1)$ and $g_2(x_1,x_2):= x_2(1-x_2)$ we have
    \begin{equation*}
       \bK = \{(x_1,x_2)\in \R^2: g_1(x_1,x_2) = g_2(x_1,x_2) = 0\} = \cK(g_1,-g_1,g_2,-g_2),
    \end{equation*}
    Let the cost be $f(x_1,x_2) := -x_1 - x_2 + x_1x_2$. By evaluating $f$ in the four points of $\bK$, we get
    \begin{equation}\label{eq:ExSimpleFStar}
        -1 = \min\limits_{\vx\in\bK} f(\vx).
    \end{equation}
    For $d \in \N$, the moment relaxation for the POP is given by
    \begin{eqnarray*}
        m_d^* := & \inf\limits_{L} & L(f)\\
	& \text{s.t.} & L \in \cQ_d(\vp)^*\notag\\
& & L(1) = 1. \notag
    \end{eqnarray*}
    Substituting $L$ by its pseudo-moments $y_\alpha := L(x^{\balpha})$ for $\balpha \in \N^2$ with $|\balpha|\leq d$ 
    turns (\ref{eq:MomHierExample}) into an SDP. For $d = 2$, it reads
    \begin{eqnarray*}
    \begin{array}{ccc}
        m_2^* = & \inf\limits_{y_{10},y_{01},y_{20},y_{11},y_{02}} & -y_{10} - y_{01} + y_{11} \\
	& \text{s.t.} & \left(\begin{array}{ccc}
	     1 & y_{10} & y_{01} \\
	     y_{10} & y_{20} & y_{11}\\
          y_{01} & y_{11} & y_{02}
	\end{array}\right) \succeq 0\\
        & & y_{10} - y_{20} = 0\\
        & & y_{01} - y_{02} = 0
    \end{array}
    \end{eqnarray*}
    Substituting $y_{20} = y_{10}$ and $y_{01} = y_{02}$ results in the SDP
    \begin{eqnarray}\label{eq:MomHierExample}
    \begin{array}{ccc}
        m_2^* = & \inf\limits_{y_{10},y_{01},y_{11}} & -y_{10} - y_{01} + y_{11} \\
	& \text{s.t.} & \left(\begin{array}{ccc}
	     1 & y_{10} & y_{01} \\
	     y_{10} & y_{10} & y_{11}\\
          y_{01} & y_{11} & y_{01}
	\end{array}\right) \succeq 0
    \end{array}
    \end{eqnarray}
    Solving (\ref{eq:MomHierExample}) gives the optimal value $m_2^\star = -1.25$ with optimal point $\vy^\star= (y_{10}^\star,y^\star_{01},y^\star_{11}) = (\frac{3}{4},\frac{3}{4},\frac{3}{8})$. For the extracted point $\vx^{(2,\star)}$ from (\ref{eq:MinEstimateExtraction}) we have
    \begin{equation*}
        \vx^{(2,\star)} = (y^\star_{10},y^\star_{01}) = (\frac{3}{4},\frac{3}{4})\notin\bK.
    \end{equation*}
    We can also exclude the possibility of having chosen a ``wrong'' minimizer of (\ref{eq:MomHierExample}). That is, there is no other optimal point $\hat{\vy} = (\hat{y}_{10},\hat{y}_{01},\hat{y}_{11})$ for (\ref{eq:MomHierExample}) with $(\hat{y}_{10},\hat{y}_{01})\in\bK$. To verify this, let $\vy = (y_{10},y_{01},y_{11})$ be feasible for (\ref{eq:MomHierExample}) with $(y_{10},y_{01})\in\bK$; a case distinction for each of the four cases $(y_{10},y_{01}) \in \{0,1\}^2$ shows that the cost for $\vy$ is bounded from below by $-1 > -1.25$. Flatness arises at $d = 3$, i.e. the optimal value $-1 = m_3^\star$ and optimal points $(1,0),(0,1),(1,1)\in\bK$ are retrieved.
\end{example}

As we have seen in the above example, the point $\vx^{(d,\star)}$ from (\ref{eq:MinEstimateExtraction}) might not be feasible for the POP (\ref{eq:pop}). Hence, we should consider different indicators for the quality of $\vx^{(d,\star)}$ as a candidate minimizer. In this context, it is reasonable to ask how close $\vx^{(d,\star)}$ lies to $\vx^\star$ (the related question of how close $f(\vx^{(d,\star)})$ is to $f^\star$ will be treated in Proposition \ref{prop:FeasibiliteEstimatorUpperBound} for convex $f$). We can ask the same question differently:
\begin{center}
How close lie the linear pseudo-moments $L_d^\star(X_i)$ of $L_d$ to the linear moments of $\delta_{\vx^\star}$?
\end{center}
This motivates two natural generalizations: First, we can investigate higher-order moments. Second, we consider POPs (\ref{eq:pop}) with possibly multiple minimizers. For such POPs, we replace the measure $\delta_{\vx^\star}$ by a general minimizer $\mu \in \cM(\bK)$ of the measure formulation (\ref{eq:DualpopPos}) of the POP (\ref{eq:pop}). Thus, we ask:
\begin{eqnarray}\label{eq:QuestionMomentConvergence}
    & \text{How close lie the pseudo-moments } L_d^\star(\bX^{\balpha}) \text{ of } L_d \text{ to the moments } \int\limits \bX^{\balpha} \; \od \mu & \\
    & \text{ of a minimizer } \mu \text{ of (\ref{eq:DualpopPos})?}& 
\end{eqnarray}
The result~\cite[Theorem 12]{schweighofer2005optimization} gives a qualitative answer to the above question: As $d$ tends to infinity, the pseudo-moments of $L_d^\star$ lie arbitrarily close to the moments of some optimal measure for (\ref{eq:DualpopPos}). In case of a unique minimizer of (\ref{eq:pop}), this implies that $\vx^{(d,\star)}$ converges to the optimal point $\vx^\star$ as $d\rightarrow \infty$. In the following section, we present a quantitative version of this result. 


\section{Convergence to optimal moments}\label{sec:MomentConvergence}

The convex structure of the measure formulation (\ref{eq:DualpopPos}) of the POP (\ref{eq:pop}) implies that the set of optimal points of (\ref{eq:DualpopPos}) is convex. More precisely, the set of optimal points for (\ref{eq:DualpopPos}) is given by
\begin{equation}\label{eq:OptMeasures}
    \{\mu \in \cM(\bS^\star) : \mu(\bK) = 1\} \quad \text{for} \quad \bS^\star := \{\vx \in\bK: f(\vx) = f^\star\}.
\end{equation}
Consequently, we would expect the pseudo-moments $L_d^\star(\bX^{\balpha})$ of an optimal point $L^\star_d$ for (\ref{eq:MomentHierarchy}) to be close to the moments of an optimal point $\mu \in \cM(\bS^\star)$ -- and not necessarily to the moments of a Dirac measure $\delta_{\vx}$ in an optimal point $\vx \in \bS^\star$. 
Before we state our main result for the pseudo-moments, we first show the analog result for measures.

\begin{lem}\label{lem:AlmostOptimalMeasure}
    Let $\bK \subset [-1,1]^n$ be closed basic semialgebraic, $f\in \R[\bX]$, $r\in \N$ and $\mu \in \cM(\bK)$ be a probability measure with
    \begin{equation}\label{eq:AlmostOptMeasure}
        \int\limits f \; \od \mu \leq f^\star  + \delta
    \end{equation}
    for some $\delta \geq 0$. Then, there exists a measure $\mu_r \in \cM(\bS^\star)$ such that for all $\balpha \in \N^n$ with $\abs{\balpha} \leq r$ it holds
    \begin{equation*}
        \abs{\int\limits \bX^{\balpha} \; \od \mu -\int\limits \bX^{\balpha} \; \od \mu_r} \leq \abs{\balpha} C \delta^{\frac{1}{\mathcal{L}}}
    \end{equation*}
    for constants $C,\mathcal{L}\geq 0$ that depend only on $f$ and $\bK$ (and not on $r$). Further, $\mathcal{L}$ can be chosen with $\mathcal{L} \geq 1$.
\end{lem}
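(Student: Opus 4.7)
The plan is to construct $\mu_r$ as the pushforward of $\mu$ under a (measurable) nearest-point projection onto $\bS^\star$, and control the moment discrepancy by combining a Łojasiewicz inequality with Jensen's inequality.

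First, I would apply the Łojasiewicz inequality (Theorem~\ref{thm:loja}) to the pair of functions $g(\vx) := \dist(\vx,\bS^\star)$ and $\tilde f(\vx) := f(\vx) - f^\star$ on the bounded closed semialgebraic set $\bK$. Both are continuous semialgebraic, and both vanish exactly on $\bS^\star$, so the hypothesis $g^{-1}(\{0\}) \subset \tilde f^{-1}(\{0\})$ holds (indeed with equality). Using Lemma~\ref{lem:LojaGeq1} to ensure the exponent is at least $1$, I obtain constants $\c,\mathcal{L} \geq 1$ (depending only on $f$ and $\bK$) such that
\begin{equation*}
    \dist(\vx,\bS^\star)^{\mathcal{L}} \leq \c\,(f(\vx)-f^\star) \qquad \text{for all } \vx \in \bK.
\end{equation*}

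Next, I would pick a Borel-measurable selection $\pi:\bK \to \bS^\star$ satisfying $\|\vx - \pi(\vx)\| = \dist(\vx,\bS^\star)$ (which exists by standard measurable-selection results, since $\bS^\star$ is a nonempty compact set) and define $\mu_r := \pi_\sharp \mu$. Then $\mu_r$ is a probability measure supported on $\bS^\star$, and for any $\balpha$ with $\abs{\balpha}\leq r$,
\begin{equation*}
    \abs{\int \bX^{\balpha}\,\od\mu - \int \bX^{\balpha}\,\od\mu_r}
    = \abs{\int \bigl(\bX^{\balpha}(\vx) - \bX^{\balpha}(\pi(\vx))\bigr)\,\od\mu(\vx)}
    \leq \abs{\balpha}\int \|\vx-\pi(\vx)\|\,\od\mu(\vx),
\end{equation*}
where I used that on $[-1,1]^n$ the monomial $\bX^{\balpha}$ is Lipschitz with constant bounded by $\abs{\balpha}$ (since $\abs{\partial_i \bX^{\balpha}(\vx)} = \alpha_i\,\abs{\bX^{\balpha-\mathbf{e}_i}(\vx)} \leq \alpha_i$ on the hypercube, so $\|\nabla \bX^{\balpha}\|\leq \abs{\balpha}$).

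Finally, I combine the two ingredients: by Łojasiewicz and Jensen's inequality applied to the concave function $t \mapsto t^{1/\mathcal{L}}$ (concave because $\mathcal{L}\geq 1$),
\begin{equation*}
    \int \|\vx-\pi(\vx)\|\,\od\mu(\vx)
    = \int \dist(\vx,\bS^\star)\,\od\mu
    \leq \c^{1/\mathcal{L}} \int \bigl(f(\vx)-f^\star\bigr)^{1/\mathcal{L}}\,\od\mu
    \leq \c^{1/\mathcal{L}} \delta^{1/\mathcal{L}},
\end{equation*}
where the last step uses (\ref{eq:AlmostOptMeasure}). Setting $C := \c^{1/\mathcal{L}}$ yields the claimed bound, with $C$ and $\mathcal{L}$ depending only on $f$ and $\bK$ (and not on $r$).

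The only genuinely subtle point is the existence of a measurable nearest-point selection $\pi$; everything else is routine once Łojasiewicz is in hand. One could sidestep the selection issue entirely by working directly with any coupling $\gamma \in \cM(\bK\times \bS^\star)$ of $\mu$ with some $\mu_r \in \cM(\bS^\star)$ that concentrates on pairs $(\vx,\vy)$ with $\|\vx-\vy\| = \dist(\vx,\bS^\star)$ (such a coupling exists by a soft compactness argument on the space of probability measures), which bypasses the need for a pointwise selection.
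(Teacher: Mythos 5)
Your argument is correct, and its analytic core (Łojasiewicz inequality for $\dist(\cdot,\bS^\star)$ versus $f-f^\star$, the Lipschitz bound $\abs{\balpha}$ for monomials on $[-1,1]^n$, and Jensen's inequality for $t\mapsto t^{1/\mathcal{L}}$) coincides with the paper's. The one genuine divergence is exactly the point you flag as subtle: how to turn the nearest-point map into a measure on $\bS^\star$. You push $\mu$ forward under a Borel nearest-point selection $\pi:\bK\to\bS^\star$ (or, alternatively, use an optimal coupling), which requires invoking a measurable-selection theorem such as Kuratowski--Ryll-Nardzewski; this is legitimate since $\bS^\star$ is nonempty and compact and the metric projection multifunction is measurable. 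The paper sidesteps measurability entirely by first invoking the Richter--Tchakaloff theorem (Proposition \ref{prop:Richter}) to replace $\mu$ by a finitely-atomic measure $\nu=\sum_i a_i\delta_{\vx^{(i)}}$ matching the moments of $\mu$ on $\mathrm{span}(\{f\}\cup\{\bX^{\balpha}:\abs{\balpha}\leq r\})$, and then projects each of the finitely many atoms individually; Jensen is then applied to the finite sum rather than to the integral. Your route is more direct and avoids the Tchakaloff step, at the cost of a selection theorem; the paper's route is more elementary in that it needs only finite-dimensional quadrature. Two cosmetic remarks: the inclusion hypothesis of Theorem \ref{thm:loja} should be written as $\tilde f^{-1}(\{0\})\subset g^{-1}(\{0\})$ (you wrote the reverse, though since the two zero sets are equal nothing is lost), and it is worth stating explicitly that the segment from $\vx$ to $\pi(\vx)$ stays in the convex set $[-1,1]^n$ so that the gradient bound applies.
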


\begin{proof}
    Let $\balpha \in \N^n$ with $\abs{\balpha} \leq r$. 
    Without loss of generality we may assume that $f^\star = 0$, i.e. that $f\geq 0$ on $\bK$ and $f(\vx^\star) = 0$. Otherwise, we consider the function $\Tilde{f}:= f - f^\star$. In this proof we will use two main ingredients:
    \begin{enumerate}
    \begin{subequations}
    \item[a)] \textit{The \L{}ojasiewicz inequality}: We apply Theorem \ref{thm:loja} and Lemma \ref{lem:LojaGeq1} to the continuous functions $f$ and
    \begin{equation}\label{eq:gDist}
        g(\vx) := \dist(x,\bS^\star)
    \end{equation}
    That is, we get constants
    \begin{equation}\label{eq:L2Geq1}
       \c \geq 0 \quad \text{and} \quad \mathcal{L} \geq 1 
    \end{equation}
    with
    \begin{equation}\label{eq:LojafDist}
        g(\vx)^{\mathcal{L}} \leq \c \abs{f(\vx)} =  \c f(\vx) \quad \text{for all }\vx \in\bK. 
    \end{equation}
    That Theorem \ref{thm:loja} can be applied is assured by Lemma \ref{lem:DistSemialgebraic}, namely it states that $g$ is semialgebraic.
    \item[b)] \textit{Richter-Tchakaloff theorem}: By Proposition \ref{prop:Richter} applied to $E := \mathrm{span}\left(\{f\} \cup \{\bX^{\balpha}: \abs{\alpha \leq r}\right)$, we get an integer $l\leq \binom{n+r}{r} + 1$, points $\vx^{(1)},\ldots,\vx^{(l)} \in\bK$, and weights $a_1,\ldots,a_l \in (0,1]$ such that for the atomic measure $\nu \in \cM(\bK)$ with
    \begin{equation}\label{eq:DefNu}
        \nu := \sum\limits_{i = 1}^l a_i \delta_{\vx^{(i)}}
    \end{equation}
    it holds
    \begin{equation}\label{eq:Nu=Mu'OnE}
        \int\limits f \; \od \mu = \int\limits f \; \od \nu \quad \quad \text{and} \quad \quad \int\limits \bX^{\balpha} \; \od \mu = \int\limits \bX^{\balpha} \; \od \nu \quad \text{for all } \abs{\balpha}\leq r.
    \end{equation}
    \end{subequations}
    \end{enumerate}
    Now we construct $\mu_r \in\cM(\bS^\star)$. Motivated by $\nu$ from (\ref{eq:DefNu}) we set
    \begin{equation}\label{eq:UpperBoundDefMuMultipleMinimizers}
        \mu_{r} := \sum\limits_{i = 1}^l a_i \delta_{\vx^{(i,\star)}} \in \cM(\bS^\star) \quad \text{for } \vx^{i,\star} \in \argmin\left\{\norms{\vx - \vx^{(i)}}{2} : \vx \in \bS^\star\right\}.
    \end{equation}
    Integrating the \L{}ojasiewicz inequality (\ref{eq:LojafDist}) against $\nu$, using (\ref{eq:gDist}) and (\ref{eq:Nu=Mu'OnE}), gives
    \begin{eqnarray}\label{eq:IntLojaf}
         \c \int\limits f(\vx) \; \od \mu \overset{\text{(\ref{eq:Nu=Mu'OnE})}}{=} \c\int\limits f(\vx) \; \od \nu \geq \int\limits g(\vx)^{\mathcal{L}} \; \od \nu = \sum\limits_{i= 1}^l a_i g(\vx^{(i)})^{\mathcal{L}} \overset{\text{(\ref{eq:gDist})}}{=} \sum\limits_{i= 1}^n a_i \norms{\vx^{(i)} - \vx^{i,\star}}{2}^{\mathcal{L}}.
    \end{eqnarray}
    It remains to put the ingredients together -- we get
    \begin{equation}\label{eq:ConvRateMeasureMinimizer4}
        \arraycolsep=1.4pt\def\arraystretch{2.2}
        \begin{array}{ccll}
            \abs{\displaystyle\int\limits \bX^{\balpha} \; \od \mu - \displaystyle\int\limits \bX^{\balpha} \; \od \mu_r} & = & \abs{\displaystyle\int\limits \bX^{\balpha} \; \od \nu - \int\limits \bX^{\balpha} \; \od \mu_r} & \text{by (\ref{eq:Nu=Mu'OnE})}\\
            & = & \abs{\sum\limits_{i = 1}^l a_i (\vx^{(i)})^{\balpha} - \sum\limits_{i = 1}^l a_i (\vx^{(i,\star)})^{\balpha}} & \text{by definition of }\nu, \mu\\
            & \leq & \sum\limits_{i = 1}^l a_i \abs{(\vx^{(i)})^{\balpha} - (\vx^{(i,\star)})^{\balpha}}\\
            & \leq &  \abs{\balpha} \sum\limits_{i = 1}^l a_i \norms{\vx^{(i)} - \vx^{(i,\star)}}{2} & \text{by Lemma \ref{lem:LipschitzMonomial}}\\
            & \leq &  \abs{\balpha} \left(\sum\limits_{i = 1}^l a_i \norms{\vx^{(i)} - \vx^{(i,\star)}}{2}^{\mathcal{L}}\right)^{\frac{1}{\mathcal{L}}} \quad \quad \hspace{4mm}& \text{by Jensen's inequality}\\
            & \leq &  \abs{\balpha} \left(\c \displaystyle\int\limits f \; \; \od \nu\right)^{\frac{1}{\mathcal{L}}} & \text{by (\ref{eq:IntLojaf})}\\
            & = &  \abs{\balpha} \left(\c \displaystyle\int\limits f \; \; \od \mu\right)^{\frac{1}{\mathcal{L}}} & \text{by (\ref{eq:Nu=Mu'OnE})}\\
            & \leq &  \abs{\balpha} \c^{\frac{1}{\mathcal{L}}} \left(f^\star + \delta\right)^{\frac{1}{\mathcal{L}}}. & \text{by (\ref{eq:AlmostOptMeasure})}
        \end{array}
    \end{equation}
    By setting $C:= \c^\frac{1}{\mathcal{L}}$, the claim follows.
\end{proof}

Now, we turn to pseudo-moments, i.e. to feasible points $L \in \cQ_d(\vp)$ for (\ref{eq:MomentHierarchy}). To account for practical solutions, we work with ``$\varepsilon$-almost optimal'' solutions $L_d$ of (\ref{eq:MomentHierarchy}). That is, for given $\varepsilon_d\geq 0$, we consider feasible points $L_d$ of (\ref{eq:MomentHierarchy}) with
\begin{equation}\label{eq:AlmostOptL}
    L_d(f) \leq m_d^\star + \varepsilon_d.
\end{equation}
In~\cite[Theorem 12]{schweighofer2005optimization} it was shown that, with $\lim\limits_{d\rightarrow \infty}\varepsilon_d = 0$, the moments $L_d(\bX^{\balpha})$ get arbitrarily close to moments of optimal measures for (\ref{eq:DualpopPos}) as $d$ tends to infinity. The following Theorem is a quantitative version of this result.


\begin{thm}\label{thm:MomConvToOptMom}
    Let $m\in \N$, $p_1,\ldots,p_m \in \R[\bX]$ satisfy (\ref{eq:ArchimAndNormalisationLorenzo}) and $f \in \R[\bX]$. Further, let $r,d \in \N$ with $\deg (f) \leq r \leq  d \in \N$ and $L_d$ be a feasible point for (\ref{eq:MomentHierarchy}) satisfying (\ref{eq:AlmostOptL}). There exists a probability measure $\mu_{r,d} \in \cM(\bS^\star)$ such that for $\balpha \in \N^n$ with $\abs{\balpha} \leq r$ it holds
    \begin{equation*}
        \abs{L_d(\bX^{\balpha}) - \int\limits \bX^{\balpha} \; \od \mu_{r,d}} \leq \abs{\balpha}  C \cdot \left(\varepsilon_d^{\frac{1}{\text{\L}}} +\gamma(n,\vp)^{\frac{1}{2.5 n \text{\L}}} t^{\frac{12}{5}}\binom{n+t}{t} d^{-\frac{1}{2.5 n \text{\L}\cdot \text{\L}_2}}\right)
    \end{equation*}
    for $t:= \max\{r,\deg(f)\}$, constants $\text{\L}, \text{\L}_2 ,C \geq 0$ which depend only on $f$ with $\text{\L}_2 \geq 1$, and the constant $\gamma(n,\vp)$ from Theorem \ref{thm:OldBaldi}.
\end{thm}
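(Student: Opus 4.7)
I reduce Theorem~\ref{thm:MomConvToOptMom} to Lemma~\ref{lem:AlmostOptimalMeasure} by inserting a genuine probability measure $\mu\in\cM(\bK)$ between the pseudo-moment $L_d$ and the target measure $\mu_{r,d}\in\cM(\bS^\star)$. The effective Putinar Positivstellensatz (Theorem~\ref{thm:OldBaldi}) furnishes $\mu$ with moments quantitatively close to the pseudo-moments of $L_d$, and Lemma~\ref{lem:AlmostOptimalMeasure} then moves $\mu$ to $\mu_{r,d}$ on $\bS^\star$.

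\textbf{Step 1 (quantitative measure approximation of $L_d$).} The first task is to construct a probability measure $\mu\in\cM(\bK)$ with
\begin{equation*}
    \abs{L_d(p)-\int p\,\od\mu}\le \|p\|\cdot \eta \qquad \text{for every }p\in\R[\bX]_{2t},
\end{equation*}
where $\eta=\eta(d,t,n,\vp)\in\cO\!\bigl(\gamma(n,\vp)^{1/(2.5n\text{\L}_2)}\,t^{12/5}\,\binom{n+t}{t}\,d^{-1/(2.5n\text{\L}_2)}\bigr)$ and $\text{\L}_2$ denotes the Łojasiewicz exponent arising in Theorem~\ref{thm:OldBaldi}. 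The argument is dual to the one behind Corollary~\ref{cor:poprate}: for $p\in\R[\bX]_{2t}$ non-negative on $\bK$, the shift $p+\epsilon_0$ has minimum at least $\epsilon_0$ on $\bK$, hence by Theorem~\ref{thm:OldBaldi} it lies in $\cQ_d(\vp)$ whenever $d\gtrsim \gamma(n,\vp)(2t)^{3.5m\text{\L}_2}((\|p\|+\epsilon_0)/\epsilon_0)^{2.5m\text{\L}_2}$. Feasibility of $L_d\in\cQ_d(\vp)^*$ then gives $L_d(p)\ge -\epsilon_0$; optimizing $\epsilon_0$ in $d$ yields $L_d(p)\ge -\|p\|\eta/2$, and applying the same inequality to $\|p\|\pm p\ge 0$ produces two-sided control. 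A Riesz/Hahn--Banach extension in the finite-dimensional space $\R[\bX]_{2t}$, whose dimension $\binom{n+2t}{2t}$ is the source of the combinatorial factor, turns this into a bona fide representing measure $\mu$.

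\textbf{Step 2 (compose with Lemma~\ref{lem:AlmostOptimalMeasure}).} Taking $p=f$ in Step~1 (legitimate since $\deg f\le 2t$) gives $\int f\,\od\mu\le L_d(f)+\|f\|\eta\le m_d^\star+\varepsilon_d+\|f\|\eta\le f^\star+\delta$ for $\delta:=\varepsilon_d+\|f\|\eta$, so $\mu$ is $\delta$-almost optimal for (\ref{eq:DualpopPos}). Lemma~\ref{lem:AlmostOptimalMeasure}, with its Łojasiewicz exponent playing the role of $\text{\L}$ in the theorem statement, then supplies $\mu_{r,d}\in\cM(\bS^\star)$ with $\abs{\int\bX^{\balpha}\,\od\mu-\int\bX^{\balpha}\,\od\mu_{r,d}}\le\abs{\balpha}C\delta^{1/\text{\L}}$ for $\abs{\balpha}\le r$. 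Specializing Step~1 once more to $p=\bX^{\balpha}$ (where $\|p\|\le 1$ since $\bK\subset[-1,1]^n$) and applying the triangle inequality yields
\begin{equation*}
\abs{L_d(\bX^{\balpha})-\int\bX^{\balpha}\,\od\mu_{r,d}}\le \eta+\abs{\balpha}\,C\,(\varepsilon_d+\|f\|\eta)^{1/\text{\L}}.
\end{equation*}
Subadditivity of $x\mapsto x^{1/\text{\L}}$ on $\R_+$, valid since $\text{\L}\ge 1$, splits the second term as $\varepsilon_d^{1/\text{\L}}+(\|f\|\eta)^{1/\text{\L}}$. Absorbing $\|f\|^{1/\text{\L}}$ and the lone $\eta$ (which is at most $\eta^{1/\text{\L}}$ for small $\eta$) into $C$, and exploiting $\text{\L},\text{\L}_2\ge 1$, $\gamma(n,\vp)\ge 1$, $t\ge 1$ and $\binom{n+t}{t}\ge 1$ to upper bound $\eta^{1/\text{\L}}$ by the target expression, completes the estimate.

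\textbf{Main obstacle.} The technical core is Step~1: producing $\mu$ with the precise quantitative rate $\eta$, carrying the combinatorial factor $\binom{n+t}{t}$ and the exact polynomial power $t^{12/5}$. This demands a careful quantitative Riesz/Hahn--Banach construction in which the perturbation parameter $\epsilon_0$ is balanced so that the effective Positivstellensatz bound propagates into the claimed shape. The remainder is an algebraic composition of two convergence rates, the $d$-decay of $\eta$ and the Łojasiewicz decay $\delta^{1/\text{\L}}$ from Lemma~\ref{lem:AlmostOptimalMeasure}, producing the product exponent $2.5\,n\,\text{\L}\,\text{\L}_2$ in the denominator.
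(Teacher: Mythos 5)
Your proposal is correct and follows essentially the same route as the paper: sandwich $L_d$ between a genuine probability measure $\mu'\in\cM(\bK)$ (obtained quantitatively from the effective Positivstellensatz) and a measure on $\bS^\star$ supplied by Lemma~\ref{lem:AlmostOptimalMeasure}, then compose the two rates, which is exactly how the product exponent $2.5n\text{\L}\cdot\text{\L}_2$ arises in the paper as well. The one difference is that your Step~1 re-derives from scratch what the paper simply invokes as Corollary~\ref{cor:ConvRateHausDist} (i.e.\ Theorem~\ref{thm:BaldiMoment}, stated in Appendix~\ref{Sec:AppendixPseudomoments}); you can cite that result directly, which also fixes two small bookkeeping slips in your sketch: the approximation should be carried out at truncation degree $t=\max\{r,\deg f\}$ rather than $2t$ (otherwise you pick up $\binom{n+2t}{2t}$ instead of the claimed $\binom{n+t}{t}$), and your labels $\text{\L}$ and $\text{\L}_2$ are swapped relative to the paper's convention (there $\text{\L}$ is the Positivstellensatz exponent and $\text{\L}_2$ the \L{}ojasiewicz exponent of $\dist(\cdot,\bS^\star)$ versus $f$ from Lemma~\ref{lem:AlmostOptimalMeasure}), though this does not affect the validity of the composed bound.
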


\begin{proof}
    Let $\balpha \in \N^n$ with $\abs{\balpha} \leq r$. We may assume $\abs{\balpha} \geq 1$. For $\balpha = 0 \in \N^n$, due to feasibility, it holds $L_d(\bX^{\balpha}) = L_d(1) = 1 = (\vx^\star)^\vO = (\vx^\star)^{\balpha}$. Again, without loss of generality, we may assume that $f^\star = 0$, i.e. that $f\geq 0$ on $\bK$ and $f(\vx^\star) = 0$. In this proof, we will pair Lemma \ref{lem:AlmostOptimalMeasure} with the convergence rate for pseudo-moments from Theorem \ref{thm:BaldiMoment}. The strategy is as follows: Using Corollary \ref{cor:ConvRateHausDist} we find a measure $\mu'\in \cM(\bK)$ whose moments approximate the pseudo-moments $L_d(\bX^{\balpha})$ and for which $\int\limits f\; \od \mu$ is close to the cost $L_d(f)$. Then we apply Lemma \ref{lem:AlmostOptimalMeasure} to find a measure $\mu \in \cM(\bS^\star)$ that is ``close'' to $\mu'$. Consequently, also the moments of $\mu$ lie close to the pseudo-moments $L_d(\bX^{\balpha})$. In the rest of the proof, we specify this strategy and quantify the arguments.
    \begin{subequations}
    \begin{enumerate}
        \item \textit{Finding a measure $\mu' \in \cM(\bK)$ close to $L_d$:} By Corollary \ref{cor:ConvRateHausDist}, we can find a measure $\mu' \in \cM(\bK)$ with $\mu(\bK) = 1$ such that for some constant $\text{\L}$ and all $\abs{\balpha}\leq r$ it holds
        \begin{equation}\label{eq:ConvRateMeasureMinimizer1}
            \abs{\int\limits \bX^{\balpha} \; \od \mu' - L_d(\bX^{\balpha})} \leq 6 \gamma(n,\vp)^{\frac{1}{2.5n\text{\L}}} t^\frac{12}{5} \binom{n+t}{t} d^{-\frac{1}{2.5n\text{\L}}}  =: \eta_{1,d}
        \end{equation}
        and
        \begin{equation}\label{eq:ConvRateMeasureMinimizer2}
            \abs{\int\limits f \; \od \mu' - L_d(f)} \leq 6 \gamma(n,\vp)^{\frac{1}{2.5n\text{\L}}} t^\frac{12}{5} \binom{n+t}{t} d^{-\frac{1}{2.5n\text{\L}}} \norms{f}{\mathrm{coeff}} =: \eta_{2,d}
        \end{equation}
        with the constants \L{} and $\gamma(n,\vp)$ from Corollary \ref{cor:ConvRateHausDist} and the term $\norms{q}{\mathrm{coeff}} := \sum\limits_{\alpha} q_\alpha^2$ for polynomials $q = \sum\limits_{\alpha} q_\alpha \vx^{\balpha} \in \R[\bX]$ is the norm defined in (\ref{eq:L2NormCoefficients}). The bound in (\ref{eq:ConvRateMeasureMinimizer2}) is exactly the statement of Corollary \ref{cor:ConvRateHausDist} applied to $q = f$. Similarly, the bound in (\ref{eq:ConvRateMeasureMinimizer1}) follows by taking $q(\vx) := \vx^{\balpha}$ in Corollary \ref{cor:ConvRateHausDist} and noting that for this choice of $q$ we have $\norms{q}{\mathrm{coeff}} = 1$. Note, that we did not choose $t = \abs{\balpha}$ in (\ref{eq:ConvRateMeasureMinimizer1}) because that might have led to different measures $\mu$ in (\ref{eq:ConvRateMeasureMinimizer1}) and (\ref{eq:ConvRateMeasureMinimizer2}).
        \item \textit{Defining $\mu \in \cM(\bS^\star)$:} From (\ref{eq:ConvRateMeasureMinimizer2}) we infer
        \begin{equation*}
            \int\limits f \; \od \mu' \leq f^\star + \abs{L_d(f) - f^\star} + \abs{L_d(f) - \int\limits f \; \od \mu'} \leq f^\star + \varepsilon_d + \eta_{2,d}.
        \end{equation*}
        Setting $\delta := \varepsilon_d + \eta_{2,d} \geq 0$ in Lemma \ref{lem:AlmostOptimalMeasure}, we find a measure $\mu \in \cM(\bS^\star)$ satisfying
        \begin{equation*}
            \abs{\int\limits \bX^{\balpha} \; \od \mu' -\int\limits \bX^{\balpha} \; \od \mu} \leq  \abs{\balpha} C' \left(\varepsilon_d + \eta_{2,d}\right)^{\frac{1}{\text{\L}_2}}
        \end{equation*}
        for constants $C' \geq 0,\text{\L}_2\geq 1$ from Lemma \ref{lem:AlmostOptimalMeasure}. Sublinearity of the map $z \mapsto z^{\frac{1}{\text{\L}_2}}$ implies
        \begin{equation}\label{eq:ConvRateMeasureMinimizer3}
            \abs{\int\limits \bX^{\balpha} \; \od \mu' -\int\limits \bX^{\balpha} \; \od \mu} \leq  \abs{\balpha} C' \left(\varepsilon_d^{\frac{1}{\text{\L}_2}} + \eta_{2,d}^{\frac{1}{\text{\L}_2}}\right)
        \end{equation}
    \item \textit{Relating $\mu$ to $L_d$:} Combining    (\ref{eq:ConvRateMeasureMinimizer1}) with (\ref{eq:ConvRateMeasureMinimizer3}), we have
        \begin{eqnarray}\label{eq:ConvRateMeasureMinimizer5}
            \abs{\int\limits \bX^{\balpha} \; \od \mu - L_d(\bX^{\balpha})} & \leq & \abs{\int\limits \bX^{\balpha} \; \od \mu - \int\limits \bX^{\balpha} \; \od \mu'} + \abs{\int\limits \bX^{\balpha} \; \od \mu' - L_d(\bX^{\balpha})}\notag\\
            & \leq &  \abs{\balpha} C' \varepsilon_d^{\frac{1}{\text{\L}_2}} +  \abs{\balpha} C' \eta_{2,d}^{\frac{1}{\text{\L}_2}} + \eta_{1,d}.
        \end{eqnarray}
        The rest of the proof consists of (coarsely) bounding the terms $\abs{\balpha} C' \eta_{2,d}^{\frac{1}{\text{\L}_2}}$ and $\eta_{1,d}$ in (\ref{eq:ConvRateMeasureMinimizer5}).
    \item \textit{Computing bounds for (\ref{eq:ConvRateMeasureMinimizer5}):} We begin by defining a constant $C''$
    \begin{equation}\label{eq:DefConstantsC'}
        C'' := 6 C' \cdot\norms{f}{\mathrm{coeff}}^{\frac{1}{\text{\L}_2}}.
    \end{equation}
    Inserting $C''$ into (\ref{eq:ConvRateMeasureMinimizer2}) gives, note that $\gamma(n,\vp) \geq 1$,
    \begin{equation}\label{eq:ConvRateMeasureMinimizer6}
       C' \eta_{2,d}^{\frac{1}{\text{\L}_2}} \leq C''\cdot \gamma(n,\vp)^{\frac{1}{2.5 n \text{\L}}} t^{\frac{12}{5}}\binom{n+t}{t} d^{-\frac{1}{2.5 n \text{\L} \cdot \text{\L}_2}}.
    \end{equation}
    This concludes a desired bound for the second summand in (\ref{eq:ConvRateMeasureMinimizer5}). For the third summand in (\ref{eq:ConvRateMeasureMinimizer5}) we have, because of $\text{\L}_2\geq 1$,
    \begin{eqnarray}\label{eq:ConvRateMeasureMinimizer7}
        \eta_{1,d} & = & 6 \gamma(n,\vp)^{\frac{1}{2.5n\text{\L}}} t^\frac{12}{5} \binom{n+t}{t} d^{-\frac{1}{2.5n\text{\L}}} \leq 6 \gamma(n,\vp)^{\frac{1}{2.5 n \text{\L}}}  t^{\frac{12}{5}}\binom{n+t}{t} d^{-\frac{1}{2.5 n \text{\L}\cdot \text{\L}_2}}.
    \end{eqnarray}
    Inserting (\ref{eq:ConvRateMeasureMinimizer6}) and (\ref{eq:ConvRateMeasureMinimizer7}) into (\ref{eq:ConvRateMeasureMinimizer5}) gives
    \begin{eqnarray*}
        \abs{\int\limits \bX^{\balpha} \; \od \mu - L_d(\bX^{\balpha})} \hspace{-1mm}& \leq & \hspace{-1mm} \abs{\balpha} C' \varepsilon_d^{\frac{1}{\text{\L}_2}} + ( \abs{\balpha} \cdot C'' + 6)\gamma(n,\vp)^{\frac{1}{2.5 n \text{\L}}} t^{\frac{12}{5}}\binom{n+t}{t} d^{-\frac{1}{2.5 n \text{\L}\cdot \text{\L}_2}}.
    \end{eqnarray*}
    The claim follows by setting $C := \max\{C',C'' + 6\}$ and $\mu_{r,d} := \mu$. \qedhere
    \end{enumerate}
    \end{subequations}
\end{proof}

For optimal solutions $L_d^\star$ of (\ref{eq:MomentHierarchy}) we can state an asympotic convergence rate.

\begin{cor}\label{cor:MomConvRateOpt}
    Let the assumptions of Theorem \ref{thm:MomConvToOptMom} be satisfied. For $r,d\in \N$ and an optimal point $L^\star_d$ for (\ref{eq:MomentHierarchy}), there exists $\mu_{r,d} \in \cM(\bS^\star)$ such that for $\alpha \in \N^n$ with $\abs{\balpha} \leq r$ it holds
    \begin{equation*}
        \abs{L_d^\star(\bX^{\balpha}) - \int\limits \bX^{\balpha} \; \od \mu_{r,d}} \leq \abs{\balpha}  C \gamma(n,\vp)^{\frac{1}{2.5 n \text{\L}}} t^{\frac{12}{5}}\binom{n+t}{t} d^{-\frac{1}{2.5 n \text{\L}\cdot \text{\L}_2}} \in \underset{d\rightarrow \infty}{\mathcal{O}}\left( d^{-\frac{1}{2.5n \text{\L}\cdot\text{\L}_2}} \right)
    \end{equation*}
    with $t:= \max\{r,\deg(f)\}$ and the same constants $\text{\L}, \text{\L}_2 ,C \geq 0$ as in Theorem \ref{thm:MomConvToOptMom}.
\end{cor}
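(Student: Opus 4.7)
The plan is a direct specialization of Theorem~\ref{thm:MomConvToOptMom} to the case of an exact optimizer. The key observation is that any optimal solution $L_d^\star$ of the moment relaxation (\ref{eq:MomentHierarchy}) is, by definition, feasible and attains the value $m_d^\star$, so the $\varepsilon$-almost optimality condition (\ref{eq:AlmostOptL}) is satisfied trivially with $\varepsilon_d = 0$. All hypotheses of Theorem~\ref{thm:MomConvToOptMom} are thus in place with the choice $L_d = L_d^\star$ and $\varepsilon_d = 0$.

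The next step is to substitute this choice into the bound of Theorem~\ref{thm:MomConvToOptMom}. Setting $\varepsilon_d = 0$ makes the first summand $\varepsilon_d^{1/\text{\L}}$ vanish, leaving precisely the explicit bound stated in the corollary. The measure $\mu_{r,d} \in \cM(\bS^\star)$ and the constants $C$, $\text{\L}$, $\text{\L}_2$ are inherited directly from Theorem~\ref{thm:MomConvToOptMom}.

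For the asymptotic statement, I would fix $n$, $\vp$, $f$, $r$, and $\balpha$, and observe that all the factors $C$, $\text{\L}$, $\text{\L}_2$, $\gamma(n,\vp)$, $t = \max\{r,\deg(f)\}$, $\binom{n+t}{t}$, and $\abs{\balpha}$ are independent of $d$. Consequently, the only $d$-dependence on the right-hand side is the factor $d^{-1/(2.5 n \text{\L}\cdot \text{\L}_2)}$, which immediately yields the claimed $\cO(d^{-1/(2.5 n \text{\L}\cdot \text{\L}_2)})$ rate as $d \to \infty$.

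There is essentially no obstacle in this proof: the heavy lifting has already been carried out in Theorem~\ref{thm:MomConvToOptMom} (and, upstream, in Theorem~\ref{thm:OldBaldi}, Corollary~\ref{cor:poprate}, and Lemma~\ref{lem:AlmostOptimalMeasure}). The only point requiring attention is to identify 'optimal' with '$\varepsilon$-almost optimal at $\varepsilon_d = 0$' and to keep track of which quantities in the bound depend on $d$ and which are fixed constants.
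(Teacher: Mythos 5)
Your proposal is correct and matches the paper's proof exactly: the paper's entire argument is ``set $\varepsilon_d = 0$ in Theorem~\ref{thm:MomConvToOptMom},'' which is precisely your specialization, with your additional remarks on the $d$-independence of the constants being a harmless elaboration.
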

\begin{proof}
    We can set $\varepsilon_d = 0$ in Theorem \ref{thm:MomConvToOptMom}.
\end{proof}

We accompany Theorem \ref{thm:MomConvToOptMom} and Corollary \ref{cor:MomConvRateOpt} with a few remarks on the distinction from~\cite[Theorem 1.8]{baldi2021moment}, and a discussion on the appearing constants, effects of improved effective Positivstellensätze, and practical relevance.

\begin{rem}[{Comparison to~\cite[Theorem 1.8]{baldi2021moment}}]\label{rem:ComparisonToLorenzoMomentConvergence}
    There are two differences between Theorem \ref{thm:MomConvToOptMom} and~\cite[Theorem 1.8]{baldi2021moment} (which we state in Theorem \ref{thm:BaldiMoment}). The principal one is that~\cite[Theorem 1.8]{baldi2021moment} considers measures supported on the semialgebraic set $\bK$ while Theorem \ref{thm:MomConvToOptMom} further specifies their support to be contained in the set of optimal points $S^\star$ from (\ref{eq:OptMeasures}). The second difference is the additional factor $\text{\L}_2$ in the exponent of $d$.
\end{rem}

\begin{rem}[Discussion on the constant $C$ in Theorem \ref{thm:MomConvToOptMom}]
    The constant $C$ is essentially defined in (\ref{eq:DefConstantsC'}) and contains the \L{}ojasiewicz constant $\c_2$ and the \L{}ojasiewicz exponent $\text{\L}_2$ from (\ref{eq:LojafDist}), as well as the norms $\norm{f}, \norms{f}{\mathrm{coeff}}$ defined in (\ref{eq:NormFHypercube}) and (\ref{eq:L2NormCoefficients}). Particularly, the quantity $\c_2$ is difficult to access because it depends on the (unknown) minimizer $\vx^\star$ and the behavior of $f$ around it.
\end{rem}

\begin{rem}[Improved Positivstellensätze]\label{rem:ImprovedPositivstellensätze}
    Any improved version of the effective Positivstellensatz Theorem \ref{thm:OldBaldi} leads to an improved convergence bound in Theorem \ref{thm:MomConvToOptMom}. The reason is that stronger rates in the effective Positivstellensatz lead to stronger rates for the approximation of pseudo-moments in the crucial Theorem \ref{thm:BaldiMoment}, see the discussion in~\cite[Section 3.3]{baldi2024lojasiewicz}. 
    An example of improved effective Positivstellensatz is the work~\cite{baldi2024lojasiewicz}, which presents an effective version of Putinar's Positivstellensatz without the dimension $n$ appearing in the exponents in the rate (\ref{eq:poprate}). However, the resulting improvement of the moment approximation in Theorem \ref{thm:BaldiMoment} has not been made explicit, which is why we work with \cite{baldi2021moment}.
\end{rem}

In the above remark, we mention how improvements on general Positivstellensätze tighten the convergence bound in Theorem \ref{thm:MomConvToOptMom}. Analogously, specialized Positivstellensätze for specific sets $\bK$ imply stronger rates in Theorem \ref{thm:MomConvToOptMom}.

\begin{rem}[Specialized effective Positivstellensätze]\label{rem:specializedPositivstellensätze}
    Specialized effective Positivstellensätze are Positivstellensätze tailored to certain sets $\bK$, such as the unit ball~\cite{slot2022sum}, the sphere~\cite{fang2021quantum,schlosser2024specialized}, or the hypercube~\cite{baldi2023psatz}, etc. For these sets, refined effective Positivstellensätze with convergence rates of order $\mathcal{O}\left(d^{-2}\right)$, as $d$ tends to infinity, have been obtained. Such tightened convergence bounds carry over to tighter bounds in Theorem \ref{thm:MomConvToOptMom} by similar arguments as in Remark \ref{rem:ImprovedPositivstellensätze}. 
    In~\cite{bach2023exponential}, even an exponential convergence rate in the effective Positivstellensatz is obtained on the hypercube for functions with local regularity properties around the minimizer $\vx^\star$. 
\end{rem}

\begin{rem}[Practical convergence rates]
    It is important to mention that the asymptotic analysis of the moment-SoS hierarchy for generalized moment problems might not transfer to practical applications. The reason is threefold. Firstly, current computational capacities restrict the computation of the moment-SoS hierarchy already for medium-sized problems to low-degree instances. Secondly, the constants in (\ref{thm:MomConvToOptMom}) can be large and hide the asymptotic behavior at lower levels of the hierarchy; further, as indicated by~\cite[Theorem 3 and 4]{slot2022sum}, the convergence might appear only at higher level in the hierarchy (see the condition $d \geq n\deg (f) \sqrt{\deg (f)}$ in the statements of~\cite[Theorem 3 and 4]{slot2022sum}). And thirdly, the conditioning of the moment-relaxation (\ref{eq:MomentHierarchy}) tends to worsen with increasing level $d$ in the hierarchy. In our setting, this could cause the term $\varepsilon_d$ to approach zero at a slow rate (or even not at all).
\end{rem}

In the next section, we transfer the bounds on convergent rates for the pseudo-moments to bounds on approximating (unique) minimizers of the POP (\ref{eq:pop}) by the estimators $\vx^{(d)}$ from Section \ref{Sec:CandidateMinimizers}.

\subsection{Convergence rate for the estimated minimizers}\label{Sec:ConvRateMinimizer}

Here we want to investigate how Theorem \ref{thm:MomConvToOptMom} can be used to analyze convergence of the candidate minimizers (\ref{eq:MinEstimateExtraction}) introduced in Section \ref{Sec:CandidateMinimizers}.
To begin, we recall shortly the construction of these candidate minimizers.

\paragraph{Minimizer estimation from Section \ref{Sec:CandidateMinimizers}} Consider the POP (\ref{eq:pop}), i.e.
\begin{eqnarray*}
    f^* := \inf\limits_{\vx} & f(\vx) &\\
	\text{s.t.} &\vx \in \cK(\vp) & \notag
\end{eqnarray*}
Next, for given $d\in \N$ and dual cone $\cQ_d(\vp)^*$ given by (\ref{eq:def:QuadModDual}), we consider the moment-relaxation (\ref{eq:MomentHierarchy}), that is
\begin{eqnarray*}
    m_d^* := & \inf\limits_{L \in \R[\bX]_d^*} & L(f)\\
	& \text{s.t.} & L \in \cQ_d(\vp)^*\notag\\
& & L(1) = 1 \notag
\end{eqnarray*}
and obtain an almost optimal operator $L_d$ with, for given $\varepsilon_d\geq 0$,
\begin{equation*}
    L_d(f) \leq m_d^\star + \varepsilon_d.
\end{equation*}
We define a candidate point $\vx^{(d)} = (x^{(d)}_1,\ldots,x^{(d)}_n) \in \R^n$ as in (\ref{eq:MinEstimateExtraction}) via linear pseudo-moments by
\begin{equation*}
    x^{(d)}_i := L_d(X_i). 
\end{equation*}

We are interested in the question of how close $\vx^{(d)}$ lies to a minimizer $\vx^\star$ of the POP (\ref{eq:pop}). Before treating this question, we emphasize a fundamental problem in this context -- \textit{convexity}. Convexity in the measure formulation 
(\ref{eq:DualpopPos}) of the POP (\ref{eq:pop}) can cause ``mixing'' of different minimizers when only considering linear moments. We illustrate this in the following example.

\begin{example}[Multiple minimizer]\label{ex:MultipleMinimizer}
    Consider a POP with multiple minimizers $\vx^{\star,1},\ldots,x^{\star,s} \in\bK$. As mentioned in (\ref{eq:OptMeasures}), the set of optimal measures $\cM^\star$ for (\ref{eq:DualpopPos}) is given by
    \begin{equation*}
        \cM^\star = \left\{ \sum\limits_{i = 1}^s a_i \delta_{\vx^{\star,i}} : a_1,\ldots,a_s \geq 0, \sum\limits_{i= 1}^s a_i = 1\right\}.
    \end{equation*}
    For $L:=\sum\limits_{i = 1}^s a_i \delta_{\vx^{\star,i}} \in \cM^\star$ with $a_1,\ldots,a_s \geq 0$ and $\sum\limits_{i = 1}^s a_i = 1$ the vector $\Tilde{\vx} := \left(L(X_i)\right)_{i = 1,\ldots,n}$ of linear moments takes the form 
    \begin{equation}\label{eq:ConvexCombMinimizers}
        \Tilde{\vx} = \sum\limits_{i = 1}^s a_i \vx^{\star,i}.
    \end{equation}
    In other words, any convex combination of minimizers of the POP (\ref{eq:pop}) can be obtained as linear moments of an optimal measure for (\ref{eq:DualpopPos}). However, such a point (\ref{eq:ConvexCombMinimizers}) need not be optimal for the POP (\ref{eq:pop}). This is not an infinite-dimensional phenomenon -- it can also occur for the moment-relaxation (\ref{eq:MomentHierarchy}) at a finite level $d\in \N$, for instance when there are multiple minimizers and flatness holds.
\end{example}

\paragraph{Case of unique minimizers}



Example \ref{ex:MultipleMinimizer} examines the situation of multiple minimizers for the POP (\ref{eq:pop}) and the resulting complications for linear minimizer estimation. However, the situation changes significantly, when there is a unique minimizer $\vx^\star$ of (\ref{eq:pop}). In that case, also the optimal measure for the optimization problem (\ref{eq:DualpopPos}) is unique and given by $\delta_{\vx^\star}$. By Theorem \ref{thm:MomConvToOptMom}, this implies that the pseudo-moments of solutions $L_d$ to moment-relaxation (\ref{eq:MomentHierarchy}) approximate the moments of $\delta_{x^\star}$. This readily gives the following result.


\begin{cor}\label{cor:EstimatorRateAlmostOpt}
    Assume the conditions of Theorem \ref{thm:MomConvToOptMom} are satisfied. Further assume that there exists a unique minimizer $\vx^\star$ of the POP (\ref{eq:pop}). Then, with $r,d,L_d,\alpha$ as in Theorem \ref{thm:MomConvToOptMom}, it holds
    \begin{equation*}
        \abs{(\vx^\star)^{\balpha} - L_d(\bX^{\balpha})} \leq \abs{\balpha}  C \cdot \left(\varepsilon_d^{\frac{1}{\text{\L}}} +\gamma(n,\vp)^{\frac{1}{2.5 n \text{\L}}} t^{\frac{12}{5}}\binom{n+t}{t} d^{-\frac{1}{2.5 n \text{\L}\cdot \text{\L}_2}}\right)
    \end{equation*}
    with the same constants $t,C,\text{\L},\text{\L}_2, \gamma(n,\vp)$ from Theorem \ref{thm:MomConvToOptMom}.
\end{cor}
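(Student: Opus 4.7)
The plan is to derive this as an immediate corollary of Theorem \ref{thm:MomConvToOptMom} by exploiting the fact that uniqueness of the minimizer collapses the set of optimal measures to a single Dirac measure.

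First I would recall the description of the set of optimal measures from (\ref{eq:OptMeasures}): the optimal set for (\ref{eq:DualpopPos}) consists of probability measures supported on $\bS^\star = \{\vx \in \bK : f(\vx) = f^\star\}$. Under the uniqueness assumption we have $\bS^\star = \{\vx^\star\}$, so the only probability measure on $\bS^\star$ is the Dirac measure $\delta_{\vx^\star}$. Consequently, the measure $\mu_{r,d} \in \cM(\bS^\star)$ produced by Theorem \ref{thm:MomConvToOptMom} is forced to equal $\delta_{\vx^\star}$, regardless of $r$ and $d$.

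Next, I would compute the moments of this measure: for every multi-index $\balpha \in \N^n$,
\begin{equation*}
    \int \bX^{\balpha} \; \od \mu_{r,d} = \int \bX^{\balpha} \; \od \delta_{\vx^\star} = (\vx^\star)^{\balpha}.
\end{equation*}
Plugging this identity into the bound supplied by Theorem \ref{thm:MomConvToOptMom} for $\abs{L_d(\bX^{\balpha}) - \int \bX^{\balpha} \; \od \mu_{r,d}}$ directly yields the claimed inequality with the same constants $t, C, \text{\L}, \text{\L}_2, \gamma(n,\vp)$.

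I do not anticipate any real obstacle here — the work was already done in Theorem \ref{thm:MomConvToOptMom}, and uniqueness simply identifies the limiting measure explicitly. The only thing worth flagging is that the statement is trivial for $\balpha = \vO$ (both sides equal $1$ by the normalization $L_d(1) = 1$), so one can silently restrict to $\abs{\balpha} \geq 1$ as in the proof of Theorem \ref{thm:MomConvToOptMom}.
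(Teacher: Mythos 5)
Your proposal matches the paper's own proof: apply Theorem \ref{thm:MomConvToOptMom}, observe that uniqueness of the minimizer forces $\bS^\star=\{\vx^\star\}$ and hence $\mu_{r,d}=\delta_{\vx^\star}$, and substitute its moments $(\vx^\star)^{\balpha}$ into the bound. The argument is correct and takes essentially the same route as the paper.
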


\begin{proof}
    We apply Theorem \ref{thm:MomConvToOptMom} and find a measure $\mu_{r,d}$ that is optimal for (\ref{eq:MomentHierarchy}) with
    \begin{equation*}
        \abs{\int\limits \bX^{\balpha} \; \od \mu_{r,d} - L_d(\bX^{\balpha})} \leq \abs{\balpha}  C \cdot \left(\varepsilon_d^{\frac{1}{\text{\L}}} +\gamma(n,\vp)^{\frac{1}{2.5 n \text{\L}}} t^{\frac{12}{5}}\binom{n+t}{t} d^{-\frac{1}{2.5 n \text{\L}\cdot \text{\L}_2}}\right).
    \end{equation*}
    By the assumption on the uniqueness of the minimizer $\vx^\star$ of (\ref{eq:pop}), the optimal measure for (\ref{eq:DualpopPos}) is unique and given by $ \delta_{\vx^\star}$, i.e. we have $\mu_{r,d} = \delta_{\vx^\star}$. Thus, its moments are given by
    \begin{equation*}
        \int\limits \bX^{\balpha} \; \od \mu_{r,d} = \int\limits \bX^{\balpha} \; \od \delta_{\vx^\star} = (\vx^\star)^{\balpha}.
    \end{equation*}
    The statement follows.
\end{proof}

The argument can be slightly refined when we are interested in the Euclidean distance between the estimator $\vx^{(d)}$ and the unique minimizer $\vx^\star$.

\begin{cor}\label{cor:ConvRateEstimator}
    Under the assumptions of Theorem \ref{thm:MomConvToOptMom}. Let $\vx^{(d)}$ be given by (\ref{eq:MinEstimateExtraction}). It holds
    \begin{equation*}
        \norms{\vx^{(d)} - \vx^\star}{2} \leq C \varepsilon_d^\frac{1}{\text{\L}_2} + (C + \sqrt{n}) \cdot \gamma(n,\vp)^{\frac{1}{2.5 n \text{\L}}} t^{\frac{12}{5}}\binom{n+t}{t} d^{-\frac{1}{2.5 n \text{\L} \cdot \text{\L}_2}}
    \end{equation*}
    with the same notation and constants $t,C,\text{\L},\text{\L}_2, \gamma(n,\vp)$ from Theorem \ref{thm:MomConvToOptMom}.
\end{cor}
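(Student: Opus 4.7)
The goal is to bound $\norms{\vx^{(d)} - \vx^\star}{2}$, where $\vx^{(d)} = (L_d(X_1),\ldots,L_d(X_n))$. A naive approach would apply Corollary \ref{cor:EstimatorRateAlmostOpt} coordinatewise (with $\balpha = e_i$) and convert the resulting $\ell_\infty$-bound into an $\ell_2$-bound via $\norms{\cdot}{2} \leq \sqrt{n}\,\norms{\cdot}{\infty}$; this however attaches an unwanted $\sqrt{n}$ to the $\varepsilon_d$-term, missing the stated bound. The plan is to refine the argument by passing through the intermediate measure $\mu' \in \cM(\bK)$ constructed in Step 1 of the proof of Theorem \ref{thm:MomConvToOptMom}, and to handle the \L{}ojasiewicz/$\varepsilon_d$ contribution directly at the level of measures, where Jensen's inequality applied to the vector-valued identity map absorbs the $\sqrt{n}$ on that term.

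Concretely, write $\norms{\vx^{(d)} - \vx^\star}{2} \leq \norms{\vx^{(d)} - \vm}{2} + \norms{\vm - \vx^\star}{2}$ for the mean $\vm := (\int X_i \, \od \mu')_{i=1,\ldots,n}$. The first summand is bounded coordinatewise by $\eta_{1,d}$ via (\ref{eq:ConvRateMeasureMinimizer1}) and then converted to $\ell_2$ at cost $\sqrt{n}$, giving $\sqrt{n}\,\eta_{1,d}$. For the second summand, assume without loss of generality that $f^\star = 0$; uniqueness of $\vx^\star$ forces $\bS^\star = \{\vx^\star\}$, so $\dist(\vx,\bS^\star) = \norms{\vx-\vx^\star}{2}$, and the \L{}ojasiewicz inequality (\ref{eq:LojafDist}) specializes to $\norms{\vx - \vx^\star}{2}^{\text{\L}_2} \leq \c_2 f(\vx)$ pointwise on $\bK$. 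Applying Jensen's inequality first to the convex norm and then to the concave map $z \mapsto z^{1/\text{\L}_2}$ (using $\text{\L}_2 \geq 1$) yields
\[
\norms{\vm - \vx^\star}{2} \leq \int \norms{X - \vx^\star}{2} \, \od \mu' \leq \left( \c_2 \int f \, \od \mu' \right)^{1/\text{\L}_2} \leq \c_2^{1/\text{\L}_2}\, (\varepsilon_d + \eta_{2,d})^{1/\text{\L}_2},
\]
where the last inequality uses $\int f \, \od \mu' \leq L_d(f) + \eta_{2,d} \leq \varepsilon_d + \eta_{2,d}$ (via (\ref{eq:ConvRateMeasureMinimizer2}) and the almost-optimality (\ref{eq:AlmostOptL})). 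Subadditivity of $z \mapsto z^{1/\text{\L}_2}$ then splits this into a $\c_2^{1/\text{\L}_2}\varepsilon_d^{1/\text{\L}_2}$ piece and a $\c_2^{1/\text{\L}_2}\eta_{2,d}^{1/\text{\L}_2}$ piece.

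It remains to combine the three contributions $\sqrt{n}\,\eta_{1,d}$, $\c_2^{1/\text{\L}_2}\eta_{2,d}^{1/\text{\L}_2}$, and $\c_2^{1/\text{\L}_2}\varepsilon_d^{1/\text{\L}_2}$. Bounding $\eta_{1,d}$ and $\eta_{2,d}^{1/\text{\L}_2}$ exactly as in (\ref{eq:ConvRateMeasureMinimizer6})--(\ref{eq:ConvRateMeasureMinimizer7}), both fit inside the common envelope $\gamma(n,\vp)^{1/(2.5n\text{\L})}\, t^{12/5}\binom{n+t}{t}\, d^{-1/(2.5n\text{\L}\cdot\text{\L}_2)}$; crucially, the $\sqrt{n}$ attaches only to the deterministic term, while the constant $C$ from Theorem \ref{thm:MomConvToOptMom} (which already absorbs $\c_2^{1/\text{\L}_2}$ and $6\norms{f}{\mathrm{coeff}}^{1/\text{\L}_2}$) appears in front of both the $\varepsilon_d^{1/\text{\L}_2}$ contribution and the $\eta_{2,d}^{1/\text{\L}_2}$ contribution, producing the coefficients $C$ and $C+\sqrt{n}$ as stated. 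The hard part is not conceptual but bookkeeping: confirming that the \emph{same} $C$ from Theorem \ref{thm:MomConvToOptMom} suffices here, and keeping the $\sqrt{n}$-factor quarantined so that it multiplies only the deterministic term and not the $\varepsilon_d$-term.
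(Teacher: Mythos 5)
Your proposal is correct and follows essentially the same route as the paper's proof: decompose through the intermediate measure $\mu'$ from Step 1 of Theorem \ref{thm:MomConvToOptMom}, bound the pseudo-moment discrepancy coordinatewise (incurring $\sqrt{n}$ only on the $\eta_{1,d}$-term), and control $\norms{\vm - \vx^\star}{2}$ by pulling the norm inside the integral (the paper's Lemma \ref{lem:VectorIntNormBound}, which is your ``Jensen for the vector-valued map'') followed by \L{}ojasiewicz and Jensen for $z\mapsto z^{1/\text{\L}_2}$. The bookkeeping of constants matches the paper's as well.
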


\begin{proof}
    We could apply Theorem \ref{thm:MomConvToOptMom} with $t= \max\{1,\deg (f)\}$ and get
    \begin{eqnarray*}
        \norms{\vx^{(d)} - \vx^\star}{2}^2 = \sum\limits_{i = 1}^n \abs{x^{(d)}_i - x^\star_i}^2 \leq \sum\limits_{i = 1}^n\left( C \varepsilon_d^{\frac{1}{\text{\L}_2}} + C \cdot \gamma(n,\vp)^{\frac{1}{2.5 n \text{\L}}} t^{\frac{12}{5}}\binom{n+t}{t} d^{-\frac{1}{2.5 n \text{\L} \cdot \text{\L}_2}}\right)^2,
    \end{eqnarray*}
    i.e. $\norms{\vx^{(d)} - \vx^\star}{2} \leq \sqrt{n} \, C\left(\varepsilon_d^{\frac{1}{\text{\L}_2}} + \gamma(n,\vp)^{\frac{1}{2.5 n L}} t^{\frac{12}{5}}\binom{n+t}{t} d^{-\frac{1}{2.5 n L}}\right)$. Compared to the claimed rate, this induces an additional factor $\sqrt{n}$ for the term $\varepsilon_d^{\frac{1}{\text{\L}_2}}$. Instead, to get the claimed rate, we follow the proof of Theorem \ref{thm:MomConvToOptMom} and refine the bound in (\ref{eq:ConvRateMeasureMinimizer4}) by applying Lemma \ref{lem:VectorIntNormBound}. Therefore, we take a measure $\mu'$ as in (\ref{eq:ConvRateMeasureMinimizer1}) and (\ref{eq:ConvRateMeasureMinimizer2}) and apply Lemma \ref{lem:VectorIntNormBound} to the function $h(\vx) := \vx - \vx^\star\in \R^n$. This gives
    \begin{equation}\label{eq:IntNormBound2}
        \norms{\left(\int\limits X_i - x_i^\star\; \od \mu'\right)_{i = 1,\ldots,n}}{2} \overset{\text{Lemma \ref{lem:VectorIntNormBound}}}{\leq} \int\limits \norms{\bX - \vx^\star}{2}\; \od \mu'.
    \end{equation}
    We get the claimed rate from (\ref{eq:IntNormBound2}) and following the computations in the proofs of Lemma \ref{lem:AlmostOptimalMeasure} and Theorem \ref{thm:MomConvToOptMom}. The rest of the proof consists of adapting those computations.
    
    Using Jensen's inequality and \L{}ojasiewicz inequality (\ref{eq:LojafDist}) (note that in (\ref{eq:LojafDist}) the function $g(\vx) := \dist(\vx,\bS^\star)$ equals $g(\vx) =  \norms{\vx-\vx^\star}{2}$ in our current setting) -- we get, with the constants $\c$ and $\mathcal{L}$ from (\ref{eq:LojafDist}),
    \begin{equation}\label{eq:CorBoundNorm}
        \int\limits \norms{\bX - \vx^\star}{2}\; \od \mu' \leq \left(\int\limits \norms{\bX - \vx^\star}{2}^{L}\; \od \mu'\right)^\frac{1}{L} \overset{\text{(\ref{eq:ConvRateMeasureMinimizer4})}}{\leq} \c^\frac{1}{L} \left(\int\limits f\; \od \mu'\right)^\frac{1}{L} \leq \c^\frac{1}{L} \left(\int\limits f\; \od \mu'\right)^\frac{1}{L}.
    \end{equation}
    As in Step 4. in the proof of Theorem \ref{thm:MomConvToOptMom}, we infer $\left(\int\limits f\; \od \mu'\right)^\frac{1}{L}\leq \varepsilon_d^\frac{1}{L} + \eta_{2,d}^\frac{1}{L}$. Together with (\ref{eq:CorBoundNorm}), this gives 
    \begin{equation}\label{eq:BoundIntXf}
        \int\limits \norms{\bX - \vx^\star}{2}\; \od \mu' \leq \varepsilon_d^\frac{1}{L} + \eta_{2,d}^\frac{1}{L}
    \end{equation}
    Putting together, we get
    \begin{eqnarray*}
        \norms{\vx^{(d)} - \vx^\star}{2} & = & \norms{\left( L_d^\star(X_i) - x_i^\star\right)_{i = 1,\ldots,n}}{2}\notag\\
        & \leq & \norms{\left(L_d^\star(X_i) - \int\limits X_i\; \od \mu'\right)_{i = 1,\ldots,n}}{2} + \norms{\left(\int\limits X_i - x_i^\star\; \od \mu'\right)_{i = 1,\ldots,n}}{2}\notag\\
        & \overset{\text{(\ref{eq:IntNormBound2})}}{\leq} & \sqrt{\sum\limits_{i = 1}^n \underbrace{\abs{\int\limits X_i \; \od \mu' - L_d^\star(X_i)}^2}_{\leq \eta_{1,d} \text{ from (\ref{eq:ConvRateMeasureMinimizer1})}}} + \int\limits \norms{\bX - \vx^\star}{2}\; \od \mu'\\
        & \overset{\text{(\ref{eq:BoundIntXf})}}{\leq} & \sqrt{n} \, \eta_{1,d} + \c^\frac{1}{L}\left(\varepsilon_d^\frac{1}{L} + \eta_{2,d}^\frac{1}{L}\right)
    \end{eqnarray*}
    where used the term $\eta_{1,d}$ from (\ref{eq:ConvRateMeasureMinimizer1}). Defining the constant $C$ as in Step 4. in the proof of Theorem \ref{thm:MomConvToOptMom} concludes the claim (by the same coarse bounds (\ref{eq:ConvRateMeasureMinimizer6}), (\ref{eq:ConvRateMeasureMinimizer7})).
\end{proof}

\begin{rem}[Symmetry reduction and leveraging sparsity]
    When multiple minimizers arise resulting from symmetries, the number of minimizers can be reduced by symmetry reduction, see for instance~\cite{riener2013exploiting} where symmetry is leveraged for POP. In such cases, the minimizers may be reduced to a unique one, such as in~\cite{augier2023symmetry} where symmetry reduction allowed to reduce to a unique solution in some problem instances of optimal control problems tackled via the moment-SoS hierarchy.

    Another concept that aims at reducing the computational complexity of the moment-relaxation (\ref{eq:MomentHierarchy}) is sparsity, see for instance~\cite[Section 8]{lasserre2015introduction}. Our approach transfers directly to correlation sparsity for which convergence rates similar to the effective Putinar's Positivstellensatz from Theorem \ref{thm:OldBaldi} are available, see~\cite{korda2024convergence}.
\end{rem}

\section{Moment convergence for an upper bound moment-SoS hierarchy}\label{sec:UpperboundHierarchy}

For polynomial optimization problems (\ref{eq:pop}), the moment-SoS hierarchy from Definition \ref{def:lasserrehierarchy} and (\ref{eq:MomentHierarchy}) is not the only hierarchy based on SoS decompositions. The subject of this section is a different hierarchy -- an upper-bound hierarchy from\cite{lasserre2011new}. We investigate the convergence of its moments as in Section \ref{sec:MomentConvergence}.

\paragraph{An upper bound hierarchy}
In~\cite{lasserre2011new}, the following hierarchy was induced: For $d\in \N$ consider
\begin{eqnarray}\label{eq:UpperboundHierarchy}
    u_d^\star := & \inf\limits_{\substack{\sigma \in \Sigma\\ \deg (\sigma)\leq d}} & \int\limits f \cdot \sigma \; \od \mu\\
    & \mathrm{s.t.} & \int\limits \sigma \; \od \mu = 1.\notag
\end{eqnarray}
where $\mu$ is a fixed reference measure with support $\mathrm{supp} (\mu) = \bK$ whose moments are known. Since every polynomial $\sigma \in \Sigma$ is non-negative we get for any feasible $\sigma$ for (\ref{eq:UpperboundHierarchy})
\begin{equation*}
    \int\limits f \cdot \sigma \; \od \mu \geq \int\limits f^\star \cdot \sigma \; \od \mu = f^\star \int\limits \sigma \; \od \mu = f^\star.
\end{equation*}
Further, since the feasible set in (\ref{eq:UpperboundHierarchy}) is monotonically increasing with $d$, it follows
\begin{equation}\label{eq:UpperBoundHierarchyValues}
    u_d^\star \geq u_{d+1}^\star \geq f^\star.    
\end{equation}
The upper-bound hierarchy (\ref{eq:UpperboundHierarchy}) is motivated by the infinite-dimensional LP
\begin{eqnarray}\label{eq:UpperBoundLP}
    u^\star := & \inf\limits_{\sigma \in \Sigma} & \int\limits f \cdot \sigma \; \od \mu \\
    & \mathrm{s.t.} & \int\limits \sigma \; \od \mu = 1.\notag
\end{eqnarray}
By definition, we have $u^\star = \lim\limits_{d\rightarrow \infty} u_d^\star$, and further we have $u^\star = f^\star$ see~\cite{lasserre2011new}. The second equality can be verified by approximating the Dirac delta $\delta_{x^\star}$ in a minimizer $x^\star$ by a measure $\nu \in \cM(\bK)$ with $\od\nu = \sigma \; \od\mu$ for a ``needle'' density $\sigma \in \Sigma$. More generally, we can identify each $\sigma \in \Sigma$ with a measure $\nu_\sigma \in \cM(\bK)$ given by
\begin{equation}\label{eq:defnusigma}
    \od \nu_\sigma := \sigma \od \mu,
\end{equation}
that is, it holds
\begin{equation*}
    \int\limits g \; \od \nu_\sigma = \int\limits g \cdot \sigma \; \od \mu \quad \text{for all } g \in \R[\bX].
\end{equation*}
Using $\nu_\sigma$ we can reformulate the optimization problem (\ref{eq:UpperboundHierarchy}) as
\begin{eqnarray}\label{eq:UpperboundHierarchyMeasure}
    u_d^\star = & \inf\limits_{\nu \in \cM(\bK)} & \int\limits f \; \od \nu\\
    & \mathrm{s.t.} & \nu = \nu_\sigma \text{ for some } \sigma \in \Sigma \text{ with } \deg (\sigma)\leq d\notag\\
    & & \int\limits 1 \; \od \nu = 1.\notag
\end{eqnarray}
This observation allows us to view (\ref{eq:UpperboundHierarchy}) as a tightening of the measure formulation (\ref{eq:DualpopPos}) of the POP (\ref{eq:pop}), i.e. the feasible points for (\ref{eq:UpperboundHierarchy}) are measures. This is in contrast to the moment-relaxations (\ref{eq:MomentHierarchy}) which act on pseudo-moments. In the following paragraph, we leverage some advantages of working with measures in (\ref{eq:UpperboundHierarchyMeasure}) compared to working with pseudo-moments in (\ref{eq:MomentHierarchy}).

\paragraph{A convergence rate for the moments in the upper bound hierarchy}
Before we give a corollary of Lemma \ref{lem:AlmostOptimalMeasure}, we recall the set $\bS^\star := \{\vx \in \bK: f(\vx) = f^\star\}$ of minimizers of $f$ in $\bK$.

\begin{cor}\label{cor:UpperboundApprox}
    Let $\bK\subset [-1,1]^n$ be closed, $r\in \N$ and $\sigma \in \Sigma$ be feasible for (\ref{eq:UpperBoundLP}) with $\int\limits f \cdot \sigma \; \od \mu \leq f^\star  + \delta$ for some $\delta \geq 0$. Then there exists a probability measure $\mu_{r}\in \cM(\bS^\star)$ such that for all $\balpha \in \N^n$ with $\abs{\balpha}\leq r$ it holds
    \begin{equation*}
        \abs{\int\limits \bX^{\balpha} \cdot \sigma\; \od \mu - \int\limits \bX^{\balpha}\; \od \mu_{r}} \leq  \abs{\balpha} C\delta^\frac{1}{L}
    \end{equation*}
    for constants $C\geq 0, L\geq 1$ that depend only on $f$ and $\bK$ (but not on $r$).
\end{cor}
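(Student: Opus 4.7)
The plan is to reduce this corollary directly to Lemma \ref{lem:AlmostOptimalMeasure} via the measure-from-density construction (\ref{eq:defnusigma}). The SoS polynomial $\sigma$ naturally induces a measure $\nu_\sigma \in \cM(\bK)$ with $\od\nu_\sigma = \sigma \od\mu$, and the feasibility constraint $\int \sigma \, \od\mu = 1$ together with non-negativity of SoS polynomials guarantees that $\nu_\sigma$ is a probability measure on $\bK$. This is exactly the kind of object to which Lemma \ref{lem:AlmostOptimalMeasure} applies.

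First I would observe that the hypothesis $\int f \cdot \sigma \, \od\mu \leq f^\star + \delta$ translates, by the definition of $\nu_\sigma$, into
\begin{equation*}
    \int f \, \od \nu_\sigma = \int f \cdot \sigma \, \od \mu \leq f^\star + \delta,
\end{equation*}
so $\nu_\sigma$ is exactly an ``almost-optimal'' probability measure on $\bK$ in the sense of hypothesis (\ref{eq:AlmostOptMeasure}) of Lemma \ref{lem:AlmostOptimalMeasure}.

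Next I would apply Lemma \ref{lem:AlmostOptimalMeasure} to $\nu_\sigma$ with the same integer $r$ and the same $\delta$. Since $\bK \subset [-1,1]^n$ is closed basic semialgebraic (matching the lemma's hypothesis), this yields a probability measure $\mu_r \in \cM(\bS^\star)$ together with constants $C \geq 0$ and $L \geq 1$ depending only on $f$ and $\bK$ such that for every $\balpha \in \N^n$ with $|\balpha| \leq r$,
\begin{equation*}
    \left| \int \bX^{\balpha} \, \od \nu_\sigma - \int \bX^{\balpha} \, \od \mu_r \right| \leq |\balpha| \, C \, \delta^{1/L}.
\end{equation*}
Rewriting the first integral via $\int \bX^{\balpha} \, \od \nu_\sigma = \int \bX^{\balpha} \cdot \sigma \, \od \mu$ gives exactly the conclusion of the corollary, with the same constants $C$ and $L$ inherited from Lemma \ref{lem:AlmostOptimalMeasure}.

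There is no real obstacle here — the entire content of the statement is the observation that upper-bound feasible points $\sigma$ are in bijection with densities of probability measures on $\bK$, which makes the upper-bound hierarchy amenable to the measure-level analysis already carried out in Lemma \ref{lem:AlmostOptimalMeasure}. The only step warranting a comment is that the constants $C, L$ do not depend on $\sigma$ (nor on $r$), because Lemma \ref{lem:AlmostOptimalMeasure} produces them from the \L{}ojasiewicz inequality applied to $f$ and $\dist(\cdot,\bS^\star)$ on $\bK$ alone; this independence is precisely what carries over into the claim.
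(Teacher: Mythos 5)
Your proof is correct and is exactly the paper's argument: the paper's proof of this corollary consists of the single sentence "apply Lemma \ref{lem:AlmostOptimalMeasure} to the measure $\nu_\sigma$ defined in (\ref{eq:defnusigma})", and you have simply spelled out the translation of hypotheses and conclusion through the identification $\od\nu_\sigma = \sigma\,\od\mu$. No differences to report.
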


\begin{proof}
    The statement follows from applying Lemma \ref{lem:AlmostOptimalMeasure} to the measure $\nu_\sigma$ defined in (\ref{eq:defnusigma}).
\end{proof}

In the above Corollary, we can substitute $\delta$ by whichever convergence rate is available for the hierarchy of upper bounds (\ref{eq:UpperboundHierarchy}). For convex bodies the following convergence rate has been established in~\cite[Theorem 3]{de2017convergence}: There exists $d_0\in \N$ such that for all $d_0\leq d\in \N$
\begin{equation}\label{eq:UpperBoundConvRate}
    u^\star_d - f^\star \leq \frac{C}{\sqrt{d}}
\end{equation}
for a constant $C\geq 0$ depending only on $f$ and $\bK$. As in previous sections, to account for practical imprecision, we allow for almost optimal solutions of (\ref{eq:UpperboundHierarchy}). That is, for $d\in \N$ let $\sigma_d \in \Sigma$ feasible for (\ref{eq:UpperboundHierarchy}) with given precision $\varepsilon_d\geq 0$, i.e.
\begin{equation}\label{eq:UpperBoundAlmostOpt}
    \int\limits f \cdot \sigma_d \; \od \mu \leq u_d^\star + \varepsilon_d.
\end{equation}

\begin{cor}\label{cor:UpperConvBody}
    Let $f\in \R[\bX]$. Assume that $\bK \subset [-1,1]^n$ has non-empty interior and is a convex closed basic semialgebraic set. For $d\in \N$, let $\varepsilon_d \geq 0$ and $\sigma_d \in \Sigma$ be feasible for (\ref{eq:UpperboundHierarchy}) and satisfy (\ref{eq:UpperBoundAlmostOpt}). Then, there exists $d_0\in \N$ such that for $r,d\in \N$ with $d\geq d_0$ there exists a probability measure $\mu_{d,r} \in \cM(\bS^\star)$ with
    \begin{equation*}
        \abs{\int\limits \bX^{\balpha} \cdot \sigma_d\; \od \mu - \int\limits \bX^{\balpha}\; \od \mu_{d,r}} \leq  \abs{\balpha} C \left(\frac{1}{\sqrt{d}} + \varepsilon_d\right)^\frac{1}{\mathcal{L}}
    \end{equation*}
    for some constants $C\geq 0,\mathcal{L} \geq 1$ depending only on $f$ and $\bK$.
\end{cor}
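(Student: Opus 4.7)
The plan is to combine the convex-body convergence rate (\ref{eq:UpperBoundConvRate}) with Corollary~\ref{cor:UpperboundApprox}. All conceptual work has already been done in those two ingredients; what remains is bookkeeping.

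First I would invoke the convergence rate from~\cite[Theorem 3]{de2017convergence}, as recalled in (\ref{eq:UpperBoundConvRate}): since $\bK \subset [-1,1]^n$ is a convex body, there exist $d_0 \in \N$ and a constant $C_1 \geq 0$, depending only on $f$ and $\bK$, such that for every $d \geq d_0$
\[
u^\star_d - f^\star \leq \frac{C_1}{\sqrt{d}}.
\]
Combining this with the almost-optimality condition (\ref{eq:UpperBoundAlmostOpt}) gives
\[
\int\limits f \cdot \sigma_d \; \od \mu \leq u^\star_d + \varepsilon_d \leq f^\star + \frac{C_1}{\sqrt{d}} + \varepsilon_d =: f^\star + \delta_d,
\]
so the density $\sigma_d$ is feasible for (\ref{eq:UpperBoundLP}) and satisfies the hypothesis of Corollary~\ref{cor:UpperboundApprox} with $\delta = \delta_d$.

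Next I would apply Corollary~\ref{cor:UpperboundApprox} to obtain constants $C_2 \geq 0$ and $\mathcal{L} \geq 1$, depending only on $f$ and $\bK$, together with a probability measure $\mu_{d,r} \in \cM(\bS^\star)$ such that for all $\balpha \in \N^n$ with $\abs{\balpha} \leq r$
\[
\abs{\int\limits \bX^{\balpha} \cdot \sigma_d \; \od \mu - \int\limits \bX^{\balpha} \; \od \mu_{d,r}} \leq \abs{\balpha} \, C_2 \, \delta_d^{1/\mathcal{L}}.
\]
Since $\mathcal{L} \geq 1$, I can coarsely bound
\[
\delta_d^{1/\mathcal{L}} = \left(\frac{C_1}{\sqrt{d}} + \varepsilon_d\right)^{1/\mathcal{L}} \leq \max\{C_1,1\}^{1/\mathcal{L}} \left(\frac{1}{\sqrt{d}} + \varepsilon_d\right)^{1/\mathcal{L}},
\]
and absorbing this into $C := C_2 \cdot \max\{C_1,1\}^{1/\mathcal{L}}$ yields the stated inequality.

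There is no real obstacle beyond constant-tracking. The only subtlety worth flagging is the division of labour between the two inputs: convexity of $\bK$ (together with non-empty interior) is used \emph{only} to justify (\ref{eq:UpperBoundConvRate}), while the passage from a density with near-optimal $f$-expectation to moments of a measure supported on $\bS^\star$ -- encoded in Corollary~\ref{cor:UpperboundApprox} via the \L{}ojasiewicz inequality -- needs neither convexity nor non-empty interior. The final constants $C$ and $\mathcal{L}$ accordingly depend on $f$ and $\bK$ but not on $r$ or on the particular choice of almost-optimal density $\sigma_d$.
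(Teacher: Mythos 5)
Your proof is correct and follows essentially the same route as the paper's: apply the convex-body rate (\ref{eq:UpperBoundConvRate}) to bound $\int f\cdot\sigma_d\,\od\mu$ by $f^\star + C_1/\sqrt{d} + \varepsilon_d$, then feed this $\delta$ into Corollary~\ref{cor:UpperboundApprox} and absorb $C_1$ into the final constant. Your explicit bound $\bigl(\tfrac{C_1}{\sqrt{d}}+\varepsilon_d\bigr)^{1/\mathcal{L}} \leq \max\{C_1,1\}^{1/\mathcal{L}}\bigl(\tfrac{1}{\sqrt{d}}+\varepsilon_d\bigr)^{1/\mathcal{L}}$ is if anything slightly more careful than the paper's constant-tracking.
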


\begin{proof}
    Let $r,d,\varepsilon_d,\sigma_d$ as in the statement. By~\cite[Theorem 3]{de2017convergence}, there exists $d_0 \in \N$ and a constant $C'$ such that for all $d\geq d_0$ it holds
    \begin{equation*}
        \int\limits f \; \od \nu_\sigma \leq u_d^\star + \varepsilon_d \overset{\text{(\ref{eq:UpperBoundConvRate})}}{\leq} f^\star + \frac{C'}{\sqrt{d}} + \varepsilon_d.
    \end{equation*}
    We can apply Corollary \ref{cor:UpperboundApprox} with $\delta :=\frac{C'}{\sqrt{d}} + \varepsilon_d$ and get a probability measure $\mu_{d,r}\in \cM(\bS^\star)$ with
    \begin{equation*}
        \abs{\int\limits \bX^{\balpha} \cdot \sigma\; \od \mu - \int\limits \bX^{\balpha}\; \od \mu_{r}} \leq  \abs{\balpha} C''\left(\frac{C'}{\sqrt{d}} + \varepsilon_d\right)^\frac{1}{\mathcal{L}}
    \end{equation*}
    for a constant $C''\geq 0$ and $\mathcal{L}\geq 1$. By setting $C:= C''\cdot\max\{1,C'\}$ the claim follows. 
\end{proof}

\begin{rem}
    The convergence rate (\ref{eq:UpperBoundConvRate}) applies to full-dimensional convex basic semialgebraic sets, however, convergence rates for the upper bound hierarchy (\ref{eq:UpperboundHierarchy}) have been established for several other (classes of) sets as well. We refer to~\cite{laurent2024convergence} for a survey on convergence rates for the moment hierarchy (\ref{eq:MomentHierarchy}) and the upper bound hierarchy (\ref{eq:UpperboundHierarchy}).
\end{rem}

\paragraph{Convergence rate for estimated minimizers}

As in Section \ref{Sec:ConvRateMinimizer}, we infer a convergence rate for an estimator for the minimizer of (\ref{eq:pop}) via linear moments. That is, for $d\in \N$ and $\sigma \in \Sigma$ feasible for (\ref{eq:UpperboundHierarchy}), we define the point $\check{\vx}^{(\sigma)} = (\check{\vx}^{(\sigma)}_1,\ldots,\check{\vx}^{(\sigma)}_n)\in \R^n$ by
\begin{equation}\label{eq:EstimatorUpperBound}
    \check{\vx}^{(\sigma)}_i := \int\limits X_i \cdot \sigma \; \od \mu \overset{\text{(\ref{eq:defnusigma})}}{=} \int\limits X_i \; \od \nu_\sigma \quad \text{for } i = 1,\ldots,n.
\end{equation}
By feasibility of $\sigma$ for (\ref{eq:UpperboundHierarchy}), the measure $\nu_\sigma$ from (\ref{eq:defnusigma}) is a probability measure. This implies that the estimator $\check{\vx}^{(\sigma)}$ lies in the convex hull $\mathrm{conv}(\bK)$ of $\bK$.

\begin{prop}\label{prop:FeasibiliteEstimatorUpperBound}
    Let $d\in \N$ and $\sigma \in \Sigma$ be feasible for (\ref{eq:UpperboundHierarchy}). Then $\check{\vx}^{(\sigma)}\in \mathrm{conv}(\bK)$. If, in addition, $f$ is convex on $\mathrm{conv}(\bK)$ then it also holds
    \begin{equation}\label{eq:UpperBoundEstimatorRate}
        f(\check{\vx}^{(\sigma)}) \leq \int\limits f \cdot \sigma \; \od \mu.
    \end{equation}
\end{prop}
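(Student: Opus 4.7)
The key observation is that the estimator $\check{\vx}^{(\sigma)}$ is exactly the barycenter of the probability measure $\nu_\sigma$ defined in (\ref{eq:defnusigma}). Indeed, coordinate-wise, $\check{\vx}^{(\sigma)}_i = \int X_i \, \od \nu_\sigma$, and $\nu_\sigma$ is a probability measure (by the normalization constraint in (\ref{eq:UpperboundHierarchy})) whose support is contained in $\bK = \supp(\mu)$. Both claims will then follow from this interpretation.

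For the first claim, I would argue by separating hyperplane. Since $\bK \subset [-1,1]^n$ is compact (closed basic semialgebraic sets considered in the paper are bounded by the Archimedean assumption), the set $\mathrm{conv}(\bK)$ is compact and convex. Assume for contradiction that $\check{\vx}^{(\sigma)} \notin \mathrm{conv}(\bK)$. By Hahn--Banach separation, there exists $\vv \in \R^n$ and $c \in \R$ such that $\vv^\top \vx \leq c$ for all $\vx \in \bK$, while $\vv^\top \check{\vx}^{(\sigma)} > c$. Integrating the first inequality against $\nu_\sigma$ (which is a probability measure on $\bK$) gives
\begin{equation*}
    \vv^\top \check{\vx}^{(\sigma)} = \sum_{i=1}^n v_i \int X_i \, \od \nu_\sigma = \int \vv^\top \vx \, \od \nu_\sigma \leq c,
\end{equation*}
contradicting the strict separation. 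Hence $\check{\vx}^{(\sigma)} \in \mathrm{conv}(\bK)$.

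For the second claim, I would invoke Jensen's inequality directly. Since $\nu_\sigma$ is a probability measure, $\bK \subset \mathrm{conv}(\bK)$, and $f$ is convex on $\mathrm{conv}(\bK)$, Jensen's inequality applied to the identity-valued random variable distributed as $\nu_\sigma$ yields
\begin{equation*}
    f(\check{\vx}^{(\sigma)}) = f\!\left(\int \vx \, \od \nu_\sigma\right) \leq \int f(\vx) \, \od \nu_\sigma = \int f \cdot \sigma \, \od \mu,
\end{equation*}
where the last equality uses the definition (\ref{eq:defnusigma}) of $\nu_\sigma$.

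There is no real obstacle in this proof — both parts reduce to standard properties of barycenters of probability measures. The only mild subtlety is making sure that one may apply Hahn--Banach separation (requiring closedness/compactness of $\mathrm{conv}(\bK)$, which holds since $\bK$ is compact) and that the form of Jensen's inequality used is valid for convex functions on $\mathrm{conv}(\bK)$ integrated against a probability measure supported on $\bK$; both are standard.
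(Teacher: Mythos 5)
Your proof is correct and follows essentially the same route as the paper: both interpret $\check{\vx}^{(\sigma)}$ as the barycenter of the probability measure $\nu_\sigma$, with the paper citing the standard fact that such a barycenter lies in the closed convex hull of the support (which equals $\mathrm{conv}(\bK)$ by compactness) where you instead prove it directly via Hahn--Banach separation. You also write out the Jensen's inequality step for the second claim explicitly, which the paper leaves implicit; both versions are sound.
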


\begin{proof}
    Let $\nu := \nu_{\sigma} \in \cM(\bK)$ be the probability measure for $\nu_{\sigma}$ defined as in (\ref{eq:defnusigma}). Consider the vector-valued integral $\displaystyle\int\limits \bX \; \od \nu \in \R^n$. By definition of $\check{\vx}^{(\sigma)}$ we have $\int\limits \bX \; \od \nu = \check{\vx}^{(\sigma)}$. Since $\nu$ is a probability measure, we also have $\displaystyle\int\limits \bX \; \od \nu \in \overline{\mathrm{conv}(\bK)}$. The latter statement is a standard result for vector-valued integrals, see for instance \cite[Section II.2. Corollary 8]{diestel1977vector}. This concludes the proof.
\end{proof}

\begin{rem}
    Proposition \ref{prop:FeasibiliteEstimatorUpperBound} states that under additional convexity assumptions the cost of the $\check{\vx}^{(\sigma)}$ is at most the cost of $\sigma$ in the upper bound hierarchy (\ref{eq:UpperboundHierarchy}). Hence, the quality of solutions $\sigma \in \Sigma$ for (\ref{eq:UpperboundHierarchy}) immediately translates to the quality of the obtained estimator $\check{\vx}^{(\sigma)}$. In particular, for convex $f$, any convergence rate for the upper bound hierarchy (\ref{eq:UpperboundHierarchy}) implies at least the same rate for the costs $f(\check{\vx}^{(\sigma)})$.
\end{rem}

\begin{rem}[Verifying convexity]
    Despite that certifying convexity of a polynomial is NP-hard, \cite[Theorem 13.8]{lasserre2015introduction}, there are necessary and sufficient SoS certificates for convexity \cite[Theorem 13.9]{lasserre2015introduction}. Consequently, in view of Corollary \ref{cor:FeasibilityEstimator}, convexity of the cost $f$ on $\mathrm{conv}(K)$ can be checked in combination with \cite[Theorem 13.27]{lasserre2015introduction} (under the assumption that $K$ satisfies the \textit{Putinar bounded degree representation}, see \cite[Definition 13.25]{lasserre2015introduction}).
\end{rem}

As a consequence of Proposition \ref{prop:FeasibiliteEstimatorUpperBound} and Corollary \ref{cor:UpperConvBody}, we infer the following corollary on how close $\check{\vx}^{(\sigma)}$ lies to the unique minimizer $\vx^\star$.

\begin{cor}\label{cor:FeasibilityEstimator}
    Assume that $\bK$ has non-empty interior and is a convex closed basic semialgebraic set. Let $d\in \N$ and $\sigma$ be feasible for (\ref{eq:UpperboundHierarchy}) satisfying (\ref{eq:UpperBoundAlmostOpt}). Then $\check{\vx}^{(\sigma)} \in \bK$. Further, if the POP (\ref{eq:pop}) has a unique minimizer $\vx^\star$, there exists $d_0\in \N$ such that for all $d_0\geq d \in \N$
    \begin{equation*}
        \norms{\vx^\star - \check{\vx}^{(\sigma)}}{2} \leq C\left(\frac{1}{\sqrt{d}} + \varepsilon_d\right)^\frac{1}{\mathcal{L}}
    \end{equation*}
    for a constants $C\geq 0, \mathcal{L} \geq 1$ depending only on $f$ and $\bK$. 
\end{cor}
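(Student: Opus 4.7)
The plan is to combine the two previously established pieces: Proposition \ref{prop:FeasibiliteEstimatorUpperBound} for feasibility, and Corollary \ref{cor:UpperConvBody} (specialized to $r=1$) for the quantitative rate on the linear moments. Since $\check{\vx}^{(\sigma)}$ is literally the tuple of linear pseudo-moments $\int X_i\,\sigma\,\od\mu$ by its definition (\ref{eq:EstimatorUpperBound}), the task reduces to translating a bound on each coordinate into a bound on the Euclidean distance, and to identifying the limiting measure explicitly in the unique-minimizer case.

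For feasibility, I would first invoke Proposition \ref{prop:FeasibiliteEstimatorUpperBound}, which immediately yields $\check{\vx}^{(\sigma)} \in \mathrm{conv}(\bK)$. The key observation is that under the present hypothesis $\bK$ is a convex closed basic semialgebraic set, so $\mathrm{conv}(\bK) = \bK$ and therefore $\check{\vx}^{(\sigma)} \in \bK$. No further work is needed here.

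For the quantitative estimate, I would apply Corollary \ref{cor:UpperConvBody} with $r=1$, which (after choosing $d \geq d_0$ as in that corollary) produces a probability measure $\mu_{d,1} \in \cM(\bS^\star)$ with
\begin{equation*}
  \left\lvert \int X_i\,\sigma\,\od\mu - \int X_i\,\od\mu_{d,1} \right\rvert \leq C'\left(\frac{1}{\sqrt{d}}+\varepsilon_d\right)^{1/\mathcal{L}} \quad \text{for } i=1,\ldots,n,
\end{equation*}
using $|\balpha| = 1$ for the choice $\balpha = e_i$. By uniqueness of the minimizer $\vx^\star$ the set $\bS^\star$ is a singleton, so necessarily $\mu_{d,1} = \delta_{\vx^\star}$ and hence $\int X_i\,\od\mu_{d,1} = x^\star_i$. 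The left-hand side is precisely $|\check{\vx}^{(\sigma)}_i - x^\star_i|$ by (\ref{eq:EstimatorUpperBound}).

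Squaring, summing over $i=1,\ldots,n$, and taking square roots gives
\begin{equation*}
  \norms{\vx^\star - \check{\vx}^{(\sigma)}}{2} \leq \sqrt{n}\,C'\left(\frac{1}{\sqrt{d}}+\varepsilon_d\right)^{1/\mathcal{L}},
\end{equation*}
and absorbing $\sqrt{n}$ into a new constant $C := \sqrt{n}\,C'$ yields the claim. There is no substantive obstacle — the corollary is essentially the union of the previous two results plus the trivial passage from the convex hull to $\bK$ and the identification of $\mu_{d,1}$ forced by uniqueness; the only mild bookkeeping is ensuring the constants $C, \mathcal{L}$ depend only on $f$ and $\bK$, which is immediate since they are inherited from Corollary \ref{cor:UpperConvBody}.
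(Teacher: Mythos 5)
Your proof is correct, and its overall structure (feasibility via Proposition \ref{prop:FeasibiliteEstimatorUpperBound} together with $\mathrm{conv}(\bK)=\bK$; the rate via the moment-convergence result plus the observation that uniqueness of $\vx^\star$ forces the approximating measure on $\bS^\star$ to be $\delta_{\vx^\star}$) matches the paper's. The only divergence is in the final quantitative step: the paper concludes ``by similar arguments as in the proof of Corollary \ref{cor:ConvRateEstimator}'', i.e.\ it re-runs the vector-valued integral estimate of Lemma \ref{lem:VectorIntNormBound} combined with Jensen and the \L{}ojasiewicz inequality, which avoids putting a $\sqrt{n}$ factor on part of the bound. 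You instead apply the already-packaged Corollary \ref{cor:UpperboundApprox}/\ref{cor:UpperConvBody} with $r=1$ coordinate-wise and sum, picking up a $\sqrt{n}$ that you absorb into $C$. Since the statement only asks for a constant depending on $f$ and $\bK$ (and $n$ is determined by $\bK$), your coarser route is perfectly adequate here and is arguably cleaner, as it reuses the stated corollary rather than unwinding its proof; the refined argument only pays off in settings like Corollary \ref{cor:ConvRateEstimator} where one wants to keep the $\varepsilon_d$-term free of the dimensional factor. One small bookkeeping point: you should note that Corollary \ref{cor:UpperConvBody} is stated for $\bK\subset[-1,1]^n$, an assumption implicitly carried over from the section's setup, so your invocation of it is legitimate but relies on that normalization.
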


\begin{proof}
    The first part of the statement follows from the first statement in Proposition \ref{prop:FeasibiliteEstimatorUpperBound}. For the second part, note that, by assumption, we have $\bS^\star = \{x^\star\}$. Therefore, $\delta_{\vx^\star}$ is the only probability measure in $\cM(\bS^\star)$. We conclude the statement by similar arguments as in the proof of Corollary \ref{cor:ConvRateEstimator}.
\end{proof}

Because convexity played an important role in this section, we want to give some concluding remarks on convexity for polynomial optimization. This includes relating our above analysis to results from \cite[Section 13]{lasserre2015introduction}.

\paragraph{Comparision to \cite[Section 13]{lasserre2015introduction}}
The text \cite[Section 13]{lasserre2015introduction} presents several notions of convexity for polynomial optimization and its consequences for the moment relaxation (\ref{eq:MomentHierarchy}). We base our comparison to~\cite[Section 13]{lasserre2015introduction} on the following three categories: notion of convexity, finite convergence of the moment relaxation (\ref{eq:MomentHierarchy}), and minimizer estimation via (\ref{eq:MinEstimateExtraction}).

We begin the discussion by emphasizing that the upper-bound hierarchy (\ref{eq:UpperboundHierarchy}) is based on measures $\nu_\sigma \in \cM(\bK)$, see (\ref{eq:defnusigma}), while the moment-relaxations (\ref{eq:MomentHierarchy}) act on pseudo-moments operators $L \in \cQ_{d}(\vp)^*$. Consequently, generalizing from measures to pseudo-moments required strengthening the notion of convexity. Emerged has SoS convexity:
\begin{equation}\label{eq:SoSConvex}
    f \text{ is called SoS convex if} \quad D^2f(x) = L(x)^TL(x) \text{ for some } L\in \R[x]^{n\times n}.
\end{equation}

\begin{enumerate}
    \item \textit{Notion of convexity:} SoS convexity implies convexity and therefore is more restrictive than using convexity only. Many -- but not all! -- of the results in \cite[Section 13]{lasserre2015introduction} require SoS convexity (\ref{eq:SoSConvex}). Hence, when we work with the upper bound hierarchy (\ref{eq:UpperboundHierarchy}) we require fewer conditions on the cost $f$ (and the defining polynomials $p$).
    \item \textit{Convergence rates:} In \cite[Theorem 13.32]{lasserre2015introduction} finite convergence with a simple degree bound is given under the assumption of SoS convexity in the cost $f$ and the defining polynomials $p_1,\ldots,p_m$. We do not match this result in Corollary \ref{cor:FeasibilityEstimator} because the upper bound hierarchy does not provide finite convergence except in degenerate cases.
    \item \textit{Minimizer estimation:} In \cite[Theorem 13.32]{lasserre2015introduction} not only shows finite convergence but also exact minimizer extraction via (\ref{eq:MinEstimateExtraction}) under SoS convexity. However, when SoS convexity is replaced by strict convexity, then still finite convergence holds but neither a degree bound nor feasibility of the candidate minimizer (\ref{eq:MinEstimateExtraction}) are available anymore in the current results \cite[Theorem 13.33]{lasserre2015introduction}. Therefore, in this situation, Corollary \ref{cor:FeasibilityEstimator}, provides stronger guarantees.
\end{enumerate}

We want to conclude this section with an analog result to Proposition \ref{prop:FeasibiliteEstimatorUpperBound} for the moment relaxations (\ref{eq:MomentHierarchy}). The following Corollary is a direct consequence of two convexity arguments from \cite[Section 13]{lasserre2015introduction} and the effective Positivstellensatz Theorem \ref{thm:OldBaldi}.

\begin{cor}\label{cor:ValConfMomRelax}
    Let $\bK = \cK(\vp) \subset [-1,1]^n$ satisfy the assumptions from Theorem \ref{thm:OldBaldi} and, additionally, the P-BDR property\footnote{The \textit{Putinar bounded degree property} (P-BDR) of order $d_0\in \N$ holds for $\bK$ if for any affine form $q:\R^n \rightarrow \R$ with $q \geq 0$ on $\bK$ we have $q \in \cQ_{d_0}(\vp)$.} of order $d_0\in \N$. Let $d_0 \leq d \in \N$. Consider the moment-relaxation (\ref{eq:MomentHierarchy}) with optimal value $m^\star_d$ and the candidate minimizer $x^{(d,\star)}$ from (\ref{eq:MinEstimateExtraction}). Then $x^{(d,\star)}\mathrm{conv}(\bK)$. Further, if $f$ is SoS convex (see (\ref{eq:SoSConvex})), then it also holds
    \begin{equation*}
        f^\star - f(x^{(d,\star})) \leq f^\star - m^\star_d \in \cO\left( d^{- 1 / 2.5 n \text{\L}}\right).
    \end{equation*}
\end{cor}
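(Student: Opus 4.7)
The plan is to mirror the two-part structure of Proposition~\ref{prop:FeasibiliteEstimatorUpperBound}: first establish $x^{(d,\star)} \in \mathrm{conv}(\bK)$ using only the P-BDR hypothesis, and then, under SoS convexity of $f$, obtain the cost bound by combining a Jensen-type inequality for pseudo-moments with the convergence rate from Corollary~\ref{cor:poprate}.

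For the containment step, I would fix an arbitrary affine polynomial $q(\vx) = \mathbf{a}^\top \vx + b$ that is non-negative on $\bK$. By the P-BDR property of order $d_0$ one has $q \in \cQ_{d_0}(\vp) \subseteq \cQ_d(\vp)$ (since $d \geq d_0$), so the positivity of $L^\star_d$ on $\cQ_d(\vp)$ yields $L^\star_d(q) \geq 0$. Using $L^\star_d(1) = 1$ together with $x^{(d,\star)}_i = L^\star_d(X_i)$ and linearity, $L^\star_d(q) = q(x^{(d,\star)})$, hence $q(x^{(d,\star)}) \geq 0$. Since this holds for every affine $q$ non-negative on $\bK$, the standard Hahn--Banach dual characterisation of closed convex hulls (a point belongs to $\overline{\mathrm{conv}(\bK)}$ iff every affine form non-negative on $\bK$ is non-negative at that point) gives $x^{(d,\star)} \in \overline{\mathrm{conv}(\bK)}$; compactness of $\bK \subset [-1,1]^n$ makes $\mathrm{conv}(\bK)$ already closed, so the containment follows.

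For the cost inequality I would invoke the Jensen inequality for pseudo-moments under SoS convexity, the central tool of \cite[Section 13]{lasserre2015introduction}: when $f$ is SoS convex and $L \in \cQ_d(\vp)^*$ with $L(1) = 1$ and $d$ at least the degree of $f$, one has $L(f) \geq f(L(X_1),\ldots,L(X_n))$. The key algebraic input is that for SoS convex $f$ the linearisation error $g_{\vy}(\vx) := f(\vx) - f(\vy) - \nabla f(\vy)^\top (\vx - \vy)$ is itself a sum of squares for every $\vy \in \R^n$; applying $L^\star_d$ to $g_{\vy}$ with $\vy := x^{(d,\star)}$ together with SoS-positivity of $L^\star_d$ yields $m^\star_d = L^\star_d(f) \geq f(x^{(d,\star)})$. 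Combined with the universal bound $m^\star_d \leq f^\star$, this sandwiches $f(x^{(d,\star)})$ between $m^\star_d$ and $f^\star$, and the $\cO(d^{-1/2.5 n \text{\L}})$ rate for $f^\star - f(x^{(d,\star)})$ is then inherited directly from Corollary~\ref{cor:poprate}.

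The main obstacle is the Jensen step: one has to import the non-trivial SoS representation of $g_{\vy}$ implied by SoS convexity of $f$, and verify that the degree of $g_{\vy}$ fits inside the truncation $\cQ_d(\vp)$ used by $L^\star_d$ so that $L^\star_d(g_{\vy}) \geq 0$ is legitimate. The containment step is then a clean separation argument, and the final rate is routine bookkeeping once the sandwich $f(x^{(d,\star)}) \leq m^\star_d \leq f^\star$ is in place.
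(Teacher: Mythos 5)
Your route is the same as the paper's; the difference is only that the paper cites \cite[Theorems 13.27 and 13.21]{lasserre2015introduction} for the two key facts, whereas you prove them from scratch. Both of your arguments are correct: the separation argument (P-BDR $\Rightarrow$ $L^\star_d(q)\geq 0$ for every affine $q\geq 0$ on $\bK$ $\Rightarrow$ $q(x^{(d,\star)})\geq 0$ $\Rightarrow$ $x^{(d,\star)}\in\mathrm{conv}(\bK)$ by compactness) is exactly the proof of Theorem 13.27, and the Helton--Nie fact that SoS convexity makes the linearization error $g_{\vy}$ a sum of squares, applied at $\vy=x^{(d,\star)}$ so that the gradient term vanishes, is exactly the proof of Theorem 13.21. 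The degree bookkeeping you worry about is harmless: $\deg g_{\vy}\leq\deg f\leq d$ is forced anyway for $L^\star_d(f)$ to be defined, and $g_{\vy}\in\SOS\cap\R[\bX]_d\subset\cQ_d(\vp)$.

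However, the final step is not the "routine bookkeeping" you claim, and you should not call the chain $f(x^{(d,\star)})\leq m^\star_d\leq f^\star$ a sandwich: it places $f(x^{(d,\star)})$ \emph{below both} other quantities, so it only yields $f^\star - f(x^{(d,\star)})\geq f^\star - m^\star_d$, which is the reverse of the displayed inequality. (The displayed inequality is equivalent to $f(x^{(d,\star)})\geq m^\star_d$, the opposite of what Jensen gives; and indeed $f(x^{(d,\star)})$ can undershoot $f^\star$ by a fixed amount even as $m^\star_d\to f^\star$, e.g.\ $\bK=\{-1,1\}$, $f(x)=x^2$, $L^\star_d(X)=0$.) The paper's own proof contains the same non sequitur ("$f(x^{(d,\star)})\leq m^\star_d$, and hence $f^\star - f(x^{(d,\star)})\leq f^\star - m^\star_d$"), so you have faithfully reproduced its content; but what your argument actually establishes is the a priori bound $f(x^{(d,\star)})\leq m^\star_d\leq f^\star$, not a rate for $f^\star - f(x^{(d,\star)})$.
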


\begin{proof}
    The point $x^{(d,\star)}$ belongs to $\mathrm{conv}(\bK)$ by \cite[Theorem 13.27]{lasserre2015introduction}. This shows the first statement. For the second, by \cite[Theorem 13.21]{lasserre2015introduction}, it holds $f\left(x^{(d,\star)}\right) \leq m_d^\star$, and hence $f^\star - f(x^{(d,\star})) \leq f^\star - m_d^\star$. The claimed asymptotic rate is the statement of Corollary \ref{cor:poprate}.
\end{proof}

\section{Discussion on alternative approaches for minimizer extraction}\label{sec:alternatives}

In this work, we concentrate on estimating minimizers of the POP (\ref{eq:pop}) using the approach described in Section \ref{Sec:CandidateMinimizers}. As mentioned in Example \ref{ex:MultipleMinimizer}, the proposed approximation of minimizers of the POP (\ref{eq:pop}) is limited to instances with a unique minimizer. We wish to highlight two promising directions that can be found in~\cite{marx2021semi,lasserre2022christoffel} and~\cite{infusino2023intrinsic}, which likewise build upon (pseudo-) moments but are not restricted to POPs with a unique minimizers.

To motivate them, we recall that, as stated in (\ref{eq:OptMeasures}), the set of all optimal measures $\mu$ for the measure formulation (\ref{eq:DualpopPos}) are exactly the probability measures supported on the set of minimizers $\bS^\star$ of the POP (\ref{eq:pop}). 
Consequently, extracting minimizers for the POP (\ref{eq:pop}) is closely related to identifying the support of a measure from its moments. In this section we want to emphasize two such methods from~\cite{marx2021semi,lasserre2022christoffel} and~\cite{infusino2023intrinsic}. 

The first method that we will discuss is based on the Christoffel-Darboux kernel which recently drew more and more attention in polynomial optimization and beyond, see for instance~\cite{lasserre2022christoffel,slot2022sum,marx2021semi}.

\paragraph{Christoffel-Darboux kernel methods}
We follow mostly~\cite{lasserre2022christoffel,marx2021semi}; note that the notation for $d$ and $n$ in ~\cite{lasserre2022christoffel} is interchanged with our use of $d,n$.

We begin by shortly recalling the Christoffel-Darboux kernel. Let $\mu$ be a measure on $\R^n$ with compact support $\bK := \supp(\mu)$. To fix notation, for $d\in \N$ we set $r(n,d):= \binom{n+d}{d}$ to be the dimension of $\R[\bX]_d$ and $M^{\mu,d} = (M^{\mu,d}_{\balpha,\bbeta})_{\abs{\balpha},\abs{\bbeta} \leq d} \in \R^{r(n,d) \times r(n,d)}$ to be the truncated moment matrix given by $M^{\mu,d}_{\balpha,\bbeta} := \int\limits \bX^{\balpha + \beta} \; \od \mu$. 

To avoid technical conditions, we assume for the rest of this section the (usual) condition that $\bK$ is compact and its interior $\mathrm{int}\ \bK$ is non-empty. For the interesting and important case of measures with singular support, we refer to~\cite[Sections 5 and 7.3.2]{lasserre2022christoffel}.

\begin{rem}
    A consequence of $\supp (\mu)$ having non-empty interior is that we can equip $\R[\bX]$ with the inner product $\langle\cdot,\cdot\rangle:\R[\bX]\times \R[\bX] \rightarrow \R$ defined by
    \begin{equation*}\label{eq:L2MuProduct}
        \langle g,h\rangle := \int\limits  g\cdot h\; \od \mu
    \end{equation*}
    In other words, we interpret $\R[\bX]$ as a subspace of $\mathrm{L}^2(\mu)$ the square-integrable measurable functions. The condition that $\supp (\mu)$ has non-empty interior guarantees that $\langle\cdot,\cdot\rangle$ is positive definite. That is, $\langle g,g\rangle > 0$ for all $g \in \R[\bX]\setminus \{0\}$, or, said differently, the moment matrix $M^{\mu,d}$ is invertible for all $d\in \N$. In particular, the finite dimensional spaces $\R[\bX]_d$ equipped with $\langle\cdot,\cdot\rangle$ are Hilbert spaces.
\end{rem}

The Christoffel-Darboux kernel is defined as follows.


\begin{definition}[Christoffel-Darboux kernel]
    Let $d\in \N$ and $M^{\mu,d}$ be the moment matrix of $\mu$. The Christoffel-Darboux kernel $K_{\mu,d}:\R^n \rightarrow \R^n \rightarrow \R$ is defined by
    \begin{equation*}
        K_{\mu,d}(\vx,\vy) := \vv(\vx)^T \left(M^{\mu,d}\right)^{-1}\vv(\vy)
    \end{equation*}
    for the moment vector $\vv:\R^n \rightarrow \R^{r(n,d)}$ with $\vv(\vz) := (\vz^{\balpha})_{\abs{\balpha} \leq d}$.
\end{definition}

Among the various applications of the Christoffel-Darboux kernel, we focus on a specific one -- the approximation of the support $\supp (\mu)$ of the measure $\mu$ given only (some of) its moments (approximately). 
Following~\cite{marx2021semi}, for $d\in \N$ the support of $\mu$ can be approximated by the set $\bS_d$ given by
\begin{equation}\label{eq:DefCDApproxSupport}
    \bS_d := \left\{ \vx \in \R^n : K_{\mu,d}(\vx,\vx) < s_d \right\}
\end{equation}
where
\begin{equation*}
    s_d := \frac{1-\alpha}{16} \frac{e^{2r}d^r}{(3r)^{2r}}
\end{equation*}
for given $\alpha \in [0,1)$, $r\in \N$ with $r> d$. Under certain regularity assumptions on $\mu$ (see~\cite[Assumption 1]{marx2021semi}), the set $\bS_d$ converges to $\supp(\mu)$ in Hausdorff distance as $d$ tends to infinity, see~\cite[Theorem 5]{marx2021semi}. When working with operators $L \in \cQ_d(\vp)^*$ (see \ref{eq:def:QuadModDual}) for the definition of $\cQ_d(\vp)^*$) instead of measures $\mu$, a natural extension is to consider the corresponding function
\begin{equation*}
    K_{L,d}(\vx,\vy) := \vv(\vx)^T \left(M^{L}\right)^{-1}\vv(\vy)
\end{equation*}
where $M^L$ is the pseudo-moment matrix of $L$ defined as in (\ref{eq:MLBilinearForm}). Consequently, a candidate approximation of the ``support of $L$'' is then
\begin{equation*}
    \bS_{L,d} := \left\{ \vx \in \R^n : K_{L,d}(\vx,\vx) < s_d \right\}.
\end{equation*}
This approach has been proven useful, among others, for approximating graphs of solutions of hyperbolic partial differential equations~\cite{marx2021semi} or outlier detection~\cite{lasserre2022christoffel}. However, transferring their analysis to our application experiences some limitations which we address in the following remark.

\begin{rem}[Limitations]\label{rem:CDKernelLimitations}
    We divide the current limitations for the application of Christoffel-Darboux analysis to minimizer estimation into two categories. One is the restriction to measures with a certain regularity, and the other is computational aspects.
    \begin{enumerate}
        \item \textit{Regularity conditions:} The analysis for the Christoffel-Darboux kernel has mostly been investigating highly regular measures~\cite[Assumption 3.7]{lasserre2022christoffel},~\cite[Assumption 1]{marx2021semi}  -- more precisely, the considered measures $\mu$ often are assumed to have support with non-empty interior and to have a density with respect to the Lebesgue measure. We, on the other hand, are interested in measures with support on the variety $S^\star = \{\vx \in \bK: f(\vx) = f^\star\}$. Thus the above-mentioned regularity conditions are not satisfied.
        \item  \textit{Computational aspects:} Computing the set $\bS_d$ from (\ref{eq:DefCDApproxSupport}) requires to compute the points $\vx\in \R^n$ with $K_{\mu,d}(\vx,\vx) < s_d$. However, as we have seen at the example of polynomial optimization, computing sublevel sets of a (generic) polynomial is a complex task. Nevertheless, compared to the general situation, the polynomial $\vx \mapsto K_{\mu,d}(\vx,\vx)$ carries much structure that should be exploited, see for instance~\cite[Section 5.2.2]{de2002approximation}.
    \end{enumerate}
\end{rem}

There exist promising perspectives to overcome the limitations stated in the previous remark. We see these as interesting directions for future work.
    
\begin{rem}[Perspectives]
    We want to emphasize three promising perspectives to leverage Christoffel-Darboux kernel analysis to overcome (partially) the limitations mentioned in Remark \ref{rem:CDKernelLimitations}.
    \begin{enumerate}
        \item Measures with singular support: As noted in Remark \ref{rem:CDKernelLimitations}, for our application, the analysis of the Christoffel-Darboux kernel for measures supported on algebraic varieties is of interest. Work in this direction can be found in~\cite[Sections 5 and 7.3.2]{lasserre2022christoffel}. The special case of including atomic perturbations has been explored in~\cite[Section 24]{simon2008christoffel}.
        \item Christoffel-Darboux kernel for pseudo-moments: Solutions of the moment relaxation (\ref{eq:MomentHierarchy}) give rise to pseudo-moments and hence do not need to correspond to a measure $\mu$. In the work~\cite{marx2021semi} the estimation of the set $\bS_d$ in (\ref{eq:DefCDApproxSupport}) from pseudo-moments is investigated, see~\cite[Lemma 1 and Theorem 5]{marx2021semi}. Together with approximation results for pseudo-moments, such as Theorem \ref{thm:BaldiMoment}, this opens a pathway to quantitative Christoffel-Darboux analysis based on pseudo-moments.
        \item Exploiting structure of the Christoffel-Darboux kernel: The Christoffel-Darboux polynomial $\vx \mapsto K_{\mu,d}(\vx,\vx)$ is a SoS polynomial. Furthermore, it is related to sum-of-squares multipliers for the SoS tightening (\ref{eq:LasserreHierarchy}) see ~\cite[Section 7.2.2]{lasserre2022christoffel} and~\cite{de2019approximate}. This provides an additional strong (computational) link between the SoS tightening (\ref{def:lasserrehierarchy}) and the moment relaxation (\ref{eq:MomentHierarchy}).
    \end{enumerate}
\end{rem}


\paragraph{A power method based on~\cite{infusino2023intrinsic}}
By~\cite[Theorem 1.2]{infusino2023intrinsic}, the support of a measure $\mu$ is given by
\begin{equation}\label{eq:SupportPowerInfusino}
    \supp (\mu) = \left\{ \vx \in \R^n: \abs{q(\vx)} \leq \sup\limits_{n\in \N} \left(\int\limits q^{2n}\; \od \mu \right)^{\frac{1}{2n}} \text{ for all } q\in \R[\bX]\right\}.
\end{equation}
In particular, for $\mathbb{\mathcal{F}}\subset \R[\bX]$ and $d\in \N \cup \{\infty\}$ we get an approximation $\bK_{\mathbb{\mathcal{F}},d}$ of $\supp(\mu)$ by
\begin{equation*}
    \bK_{\mathbb{\mathcal{F}},d}(\mu) := \left\{ \vx \in \R^n: \abs{q(\vx)} \leq \sup\limits_{\substack{n\in \N\\ n\cdot \deg(q)\leq d}} \left(\int\limits q^{2n}\; \od \mu \right)^{\frac{1}{2n}} \text{ for all } q\in\mathbb{\mathcal{F}} \right\}.
\end{equation*}
For $d = \infty$, the set $\bK_{\mathbb{\mathcal{F}},d}(\mu)$ contains the support of $\mu$. For $d< \infty$ none of the two sets $\bK_{\mathbb{\mathcal{F}},d}(\mu)$ and $\supp (\mu)$ is contained in the other in general. When, instead of a measure $\mu$, a linear map $L:\R[\bX]_{2d}\rightarrow \R$ is given, we consider the set
\begin{equation*}
    \bK_{\mathbb{\mathcal{F}},d}(L) := \left\{ \vx \in \R^n: \abs{q(\vx)} \leq \sup\limits_{\substack{n\in \N\\ n \cdot\deg (q)\leq d}} \left(L(q^{2n})\right)^{\frac{1}{2n}} \text{ for all } q\in\mathbb{\mathcal{F}} \right\}.
\end{equation*}
The computation of $\bK_{\mathbb{\mathcal{F}},d}(L)$ is based solely on the (pseudo-) moments $L(\vx^{\balpha})$ for $|\balpha|\leq 2d$ and thus relates closely to our framework. However, efficient computation and quantitative analysis of the convergence of $\bK_{\mathbb{\mathcal{F}},d}(\mu)$ and $\bK_{\mathbb{\mathcal{F}},d}(L)$ as $d\rightarrow \infty$ are challenging and interesting tasks. To the best of our knowledge, currently there are no efficient methods available this task. Likewise, exploring whether the set $\mathbb{\mathcal{F}}\subset \R[\bX]$ and the integer $d \in \N$ can be chosen cleverly remains an intriguing task.

\section{Conclusion}\label{sec:Conclusion}
In this text, we give a first quantitative convergence analysis for the solutions $L_d^\star$ of the moment-SoS hierarchy towards optimal solutions $\mu^\star$ of the measure formulation (\ref{eq:DualpopPos}). This complements qualitative results in~\cite{schweighofer2005optimization} by quantitative results on the moment-SoS hierarchy from~\cite{baldi2021moment}. By borrowing \L{}yapunov inequality arguments from~\cite{baldi2021moment} we show that optimal measures $\mu^\star$ approximate the operators $L_d^\star$ by the same rate (up to a constant factor) as general measures do, see Theorem \ref{thm:MomConvToOptMom} and Remark \ref{rem:ComparisonToLorenzoMomentConvergence}. For polynomial optimization problems with unique minimizers, this results in a polynomial convergence rate for established linear estimators for optimal points of the POP.

Our analysis extends also to the so-called upper bound hierarchy, see Section \ref{sec:UpperboundHierarchy}. This hierarchy works directly with measures instead of pseudo-moments, which we leverage in an interplay with convexity properties of the underlying POP. We present that, when the underlying POP enjoys certain convexity properties, the estimated minimizer is feasible and a priori bounds on its cost can be computed from the upper-bound hierarchy, see Proposition \ref{prop:FeasibiliteEstimatorUpperBound}.

Our analysis is compatible with future improvements in effective Positivstellensätze, in the sense that refinements in the effective Positivstellenätze immediately strengthen the convergence bounds presented in this text, see Remark \ref{rem:ImprovedPositivstellensätze}. Consequently, specialized Positivstellensätze for specific sets, such as the sphere, the unit ball, or the hypercube~\cite{slot2022sum,fang2021sum,schlosser2024specialized,baldi2023psatz,bach2023exponential}, also prove beneficial in our framework, see Remark \ref{rem:specializedPositivstellensätze}.

Finally, in Section \ref{sec:alternatives}, we mention two other methods for estimating minimizers from solutions of moment-SoS relaxations/tightenings. One of these methods is based on the Christoffel-Darboux kernel which recently showed more and more promising applications in a wide variety of fields. The other is a power method allowing for outer approximations of the support of a measure. For both these methods, we discuss future perspectives and current limitations.

Another future direction could include the extension of this work to different SoS hierarchies, such as~\cite{wang2021tssos,ahmadi2019dsos,tran2024convergence}, or to non-commutative polynomial optimization  as in~\cite{burgdorf2016optimization}.

\section{Acknowledgment}
I want to thank Matteo Tacchi-Bénard for his fresh and insightful comments on the text and our conversations prior to and during the writing. I am thankful to Lorenzo Baldi for his comments on the flat extension and his detailed remarks on the convergence rates in the moment-SoS hierarchy. I want to thank Didier Henrion, Markus Schweighofer, and Saroj Chhatoi for their helpful discussions.

\section*{Appendix}
\appendix

\section{Approximation of pseudo-moments by truncated moment sequences}\label{Sec:AppendixPseudomoments}

\begin{subequations}
Throughout this section, we consider $p_1,\ldots,p_m \in \R[\bX]$ satisfying (\ref{eq:ArchimAndNormalisationLorenzo}) and the set $\bK$ denotes $\bK := \cK(\vp) = \{\vx \in \R^n : p_1(\vx)\geq 0, \ldots, p_m(\vx)\geq 0\}$.

In this section we want to state the result~\cite[Theorem 1.8]{baldi2021moment} in Theorem \ref{thm:BaldiMoment}. To do so, we first need to introduce the necessary notations from~\cite{baldi2021moment}.

\paragraph{Truncated measures}
 Each measure $\mu \in \cM(\bK)$ can be identified with a linear form $L_\mu:\R[\bX] \rightarrow \R$ via
 \begin{equation*}
     L_\mu(q):= \int\limits q \; \od \mu.
 \end{equation*}
 By non-negativity of $\mu$ it holds for all $q\in \cQ(\vp)$
 \begin{equation*}
     L_\mu(q) = \int\limits_K q \; \od \mu \geq 0
 \end{equation*}
In other words $L_\mu\big|_{\R[\bX]_d} \in \cQ_p(\vp)^*$ for all $d\in \N$. In coherence with~\cite{baldi2021moment}, we denote by $\cM^{(1)}(\bK)^t$ the set of ``probability measures truncated at degree $t \in \N$'', i.e.
 \begin{equation*}
     \cM ^{(1)}(\bK)^t := \{ L_\mu\big|_{\R[\bX]_d} : \mu \in \cM(\bK), \mu(\bK) = 1 \}
 \end{equation*}
 where the first upper index only indicates that the first moment $L_\mu (1)$ equals $1$.

 \begin{lem}\label{lem:M1KtClosed}
     For compact $\bK$ and $t\in \N$, the set $\cM ^{(1)}(\bK)^t$ is compact (with respect to $\norms{\cdot}{\mathrm{op}}$).
 \end{lem}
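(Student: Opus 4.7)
The plan is to exploit that $\R[\bX]_t$ is finite-dimensional, so its dual $\R[\bX]_t^*$ equipped with $\norms{\cdot}{\mathrm{op}}$ is also finite-dimensional. In such a space, compactness is equivalent to being closed and bounded, and all norms are equivalent. I would therefore split the proof into a boundedness step and a closedness step.

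For boundedness, the Archimedean hypothesis $1 - \norms{\bX}{2}^2 \in \cQ(\vp)$ forces $\bK \subseteq [-1,1]^n$. Hence for any probability measure $\mu \in \cM(\bK)$ and any $q \in \R[\bX]_t$,
\[
\abs{L_\mu(q)} \;=\; \abs{\int q\,\od\mu} \;\leq\; \int \abs{q}\,\od\mu \;\leq\; \max_{\vx \in [-1,1]^n}\abs{q(\vx)} \;=\; \|q\|.
\]
Passing to the operator norm on $\R[\bX]_t^*$ (with respect to whichever norm on $\R[\bX]_t$ is used to define $\norms{\cdot}{\mathrm{op}}$) and invoking equivalence of norms on this finite-dimensional space yields a uniform bound $\norms{L_\mu\big|_{\R[\bX]_t}}{\mathrm{op}} \leq C_t$ for every probability measure $\mu \in \cM(\bK)$.

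For closedness, let $(L_{\mu_k}) \subset \cM^{(1)}(\bK)^t$ with $L_{\mu_k} \to L^\star$ in $\norms{\cdot}{\mathrm{op}}$. Because $\bK$ is compact, the set of probability measures on $\bK$ is sequentially weak-$*$ compact (Banach--Alaoglu applied to the unit ball of $C(\bK)^*$, or equivalently Prokhorov's theorem since tightness is automatic on a compact space). Extract a subsequence $\mu_{k_j}$ converging weakly-$*$ to some probability measure $\mu \in \cM(\bK)$. For every $q \in \R[\bX]_t$, which is continuous on $\bK$, one has
\[
L_{\mu_{k_j}}(q) \;=\; \int q\,\od\mu_{k_j} \;\longrightarrow\; \int q\,\od\mu \;=\; L_\mu(q),
\]
while simultaneously $L_{\mu_{k_j}}(q) \to L^\star(q)$ by operator-norm convergence. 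Uniqueness of limits gives $L^\star = L_\mu\big|_{\R[\bX]_t}$, so $L^\star \in \cM^{(1)}(\bK)^t$.

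I do not anticipate any real obstacle here; the only point to keep in mind is that the target space $\R[\bX]_t^*$ is finite-dimensional, which is precisely what allows the weak-$*$ compactness of probability measures on the compact set $\bK$ to be transferred directly to norm compactness of their truncated linear forms. Combining boundedness and closedness in the finite-dimensional $\R[\bX]_t^*$ then yields compactness in $\norms{\cdot}{\mathrm{op}}$.
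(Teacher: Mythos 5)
Your proof is correct and rests on the same two pillars as the paper's own argument: weak-$*$ (sequential) compactness of the probability measures on the compact set $\bK$, and the finite-dimensionality of $\R[\bX]_t^*$, which upgrades weak-$*$ convergence of the measures to $\norms{\cdot}{\mathrm{op}}$-convergence of the truncated forms. The paper packages this as ``the continuous image of a compact set is compact'' rather than your Heine--Borel split into boundedness plus closedness, but the mathematical content is essentially identical.
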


 \begin{proof}
    Because $\bK$ is compact, by the Banach-Alaoglu theorem, the set $\cP(\bK) := \{\mu \in \cM(\bK): \mu(\bK) = 1\}$ of probability measures is compact with respect to the weak$^*$ topology. For each $t \in \N$, the set $\cM ^{(1)}(\bK)^t$ is the image of the map $T:\cP \rightarrow \cM ^{(1)}(\bK)^t$ with $\mu \mapsto L_\mu\big|_{\R[\bX]_t}$. Because the image of a compact set under a continuous map is compact, we conclude the statement by showing that $T$ is continuous with respect to the weak$^*$ topology on $\cM(\bK)$. Therefore, let $(\mu_l)_{l\in \N} \subset \cP$ be a weak$^*$ converging sequence with limit $\mu\in \cP$, i.e. for all $q\in \R[\bX]$ it holds
    \begin{equation*}\label{eq:LimitMomentSequence}
        \int\limits q \; \od \mu_{l} \rightarrow  \int\limits q \; \od \mu \quad \text{as } j\rightarrow \infty.
    \end{equation*}
    In particular, by testing only with $q\in \R[\bX]_t$, we get
    \begin{equation*}\label{eq:LimitMomentSequence2}
        (T\mu_l)(q) = \left(L_\mu\big|_{\R[\bX]_t}\right)(q) (L_{\mu_l})(q) = \int\limits q \; \od \mu_{l} \rightarrow  \int\limits q \; \od \mu = L_\mu (q) = (T\mu)(q) \quad \text{as } j\rightarrow \infty.
    \end{equation*}
    This shows that $T$ is continuous with respect to the weak$^*$ topologies on $\cM(\bK)$ and $\R[\bX]_t$. Since $\R[\bX]_t^*$ is finite-dimensional, the weak$^*$ topology and any norm topology on $\R[\bX]_t^*$ coincide. Hence, the $T$ is continuous with respect to the weak$^*$ star topology on $\cM(\bK)$ and the $\norms{\cdot}{\mathrm{op}}$ topology on $\R[\bX]_t$. This is what remained to be shown.
 \end{proof}

 \paragraph{Pseudo-moments}
 Analog to the set $\cM ^{(1)}(\bK)^t$ of truncated probability measures, we define for $d,t\in \N$ with $t\leq d$ the truncation $L\big|_{\R[\bX]_t}$ of operators $L\in \cQ_d(\vp)^*$ with $L(1) = 1$, i.e.
 \begin{equation*}
     \cL _d^{(1)}(\vp)^t := \{ L\big|_{\R[\bX]_t}: L\in \cQ_d(\vp)^*, L(1) = 1 \}.
 \end{equation*}
 By identifying an operator $L \in \cQ_d(\vp)^*$ with the family of its pseudo-moments $L\left(\bX^{\balpha}\right)$ for $\balpha\in \N^n$ with $\abs{\balpha}\leq d$, we also refer to $\cL _d^{(1)}(\vp)^t$ as a set of pseudo-moments.

 \paragraph{Hausdorff distance}
 Both sets $\cL _d^{(1)}(\vp)^t$ and $\cM ^{(1)}(\bK)^t$ are subsets of the dual space $\R[\bX]_t^*$ of $\R[\bX]_t$. On $\R[\bX]_t$ we choose the norm $\norms{\cdot}{\mathrm{coeff}}$ defined as the Euclidean norm of the coefficients of a polynomial with respect to the standard monomial basis, i.e.
 \begin{equation}\label{eq:L2NormCoefficients}
    \norms{q}{\mathrm{coeff}}^2 := \sum\limits_{\alpha} q_\alpha^2 \quad \quad \text{for } q = \sum\limits_{\alpha} q_\alpha \vx^{\balpha} \in \R[\bX]_t.
 \end{equation}
 This equips $\R[\bX]_t^*$ with the dual norm, i.e. the induced operator norm
 \begin{equation}\label{eq:DualNorm}
     \norms{L}{\mathrm{op}} := \sup\limits_{\norms{q}{\mathrm{coeff}} \leq 1} |L(q)| = \sup\limits_{q \neq 0} \frac{|L(q)|}{\norms{q}{\mathrm{coeff}}}.
 \end{equation}
 The Hausdorff distance $d_H(A_1,A_2)$ (with respect to $\norms{\cdot}{\mathrm{op}}$) between two sets $A_1,A_2 \subset \R[\bX]^*_t$ is defined by
\begin{equation*}
    d_H(A_1,A_2) := \inf \{ \varepsilon > 0: B_\varepsilon(A_1) \supset A_2, B_\varepsilon(A_2)\supset A_1\}
\end{equation*}
where $B_\varepsilon(C):= \{c+x: c\in C,\vx \in B_\varepsilon(0)\}$ and $B_\varepsilon(0)$ denotes the ball centered at $0$ with radius $\varepsilon$ w.r.t the norm $\norms{\cdot}{\mathrm{op}}$ from (\ref{eq:DualNorm}).

\paragraph{Convergence rate for pseudo-moments}

To state the result~\cite[Theorem 1.8]{baldi2021moment} on the convergence rate for $d_H(\cM ^{(1)}(\bK)^t, \cL _d^{(1)}(\vp)^t)$ as $d\rightarrow \infty$, we need to introduce the following integer $d_0 \in \N$, see~\cite[Lemma 3.6]{baldi2021moment},
\begin{equation}\label{eq:HausDistConvd0}
    d_0 := \min \{k \in \N: 1-p_i \in \cQ_k(\vp) \; \text{ for } i = 1,\ldots,m\} < \infty.
\end{equation}

\begin{thm}[{\cite[Theorem 1.8]{baldi2021moment}}]\label{thm:BaldiMoment}
    Let $n\geq 2$, $m\in \N$ and $p_1,\ldots,p_m \in \R[\bX]$ such that (\ref{eq:ArchimAndNormalisationLorenzo}) holds. Let $d_0\in \N$ be given by (\ref{eq:HausDistConvd0}). Set $K = \cK(\vp)$. Let $\varepsilon > 0$, $t \in \N$. Then, for $d \geq 2t+ d_0$
    \begin{equation}\label{eq:HausDistRate}
        d\geq \gamma(n,\vp) 6^{2.5 n\text{\L}} t^{6n \text{\L}} \binom{n+t}{t}^{2.5 n\text{\L}} \varepsilon^{-2.5 n\text{\L}} \quad \text{implies} \quad d_H(\cM ^{(1)}(\bK)^t, \cL _d^{(1)}(\vp)^t) \leq \varepsilon.
    \end{equation}
\end{thm}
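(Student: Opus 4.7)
The plan is to establish both containments underlying the Hausdorff bound. The easy direction, $\cM^{(1)}(\bK)^t \subseteq \cL_d^{(1)}(\vp)^t$, is immediate: a probability measure $\mu$ on $\bK = \cK(\vp)$ integrates non-negatively against every element of $\cQ_d(\vp)$, so $L_\mu$ restricts to a member of $\cQ_d(\vp)^*$ with $L_\mu(1)=1$, and further restricting to $\R[\bX]_t$ lands in $\cL_d^{(1)}(\vp)^t$. Hence the Hausdorff distance is governed entirely by how far a pseudo-moment $L \in \cL_d^{(1)}(\vp)^t$ can lie from the set of honest truncated measures.

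For the reverse, quantitative direction, I would argue by contradiction. Suppose $L \in \cL_d^{(1)}(\vp)^t$ has $\dist(L, \cM^{(1)}(\bK)^t) > \varepsilon$ in the dual norm $\|\cdot\|_\mathrm{op}$. Since $\cM^{(1)}(\bK)^t \subset \R[\bX]_t^*$ is convex and compact (Lemma \ref{lem:M1KtClosed}), Hahn--Banach separation in the finite-dimensional space $\R[\bX]_t^*$ supplies a polynomial $q \in \R[\bX]_t$ with $\|q\|_\mathrm{coeff} \leq 1$ such that
\begin{equation*}
L(q) \; < \; \inf_{\mu \in \cP(\bK)} \int q \, \od\mu \; - \; \varepsilon \; = \; q^\star - \varepsilon, \qquad q^\star := \min_{\vx \in \bK} q(\vx).
\end{equation*}
Shift to the non-negative witness $p := q - q^\star + \tfrac{\varepsilon}{2} \in \R[\bX]_t$, which satisfies $p \geq \tfrac{\varepsilon}{2}$ on $\bK$. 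The idea is to apply Theorem \ref{thm:OldBaldi} to $p$: once $p \in \cQ_d(\vp)$, positivity of $L$ on $\cQ_d(\vp)$ forces $L(p) \geq 0$, i.e.\ $L(q) \geq q^\star - \tfrac{\varepsilon}{2}$, contradicting the separation.

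What remains is quantitative control of the hypothesis $\|p\|$ of Theorem \ref{thm:OldBaldi}. Cauchy--Schwarz yields $|q(\vx)| \leq \binom{n+t}{t}^{1/2} \|q\|_\mathrm{coeff} \leq \binom{n+t}{t}^{1/2}$ for $\vx \in [-1,1]^n$, and since the Archimedean condition (\ref{eq:ArchimAndNormalisationLorenzo}) forces $\bK \subseteq [-1,1]^n$ one has $|q^\star| \leq \binom{n+t}{t}^{1/2}$ as well, so $\|p\| \leq c \binom{n+t}{t}^{1/2}$ for an absolute constant $c$. Together with $\deg(p) \leq t$ and $p^\star = \tfrac{\varepsilon}{2}$, Theorem \ref{thm:OldBaldi} ensures $p \in \cQ_d(\vp)$ as soon as
\begin{equation*}
d \; \geq \; \gamma(n,\vp) \, t^{3.5 m \text{\L}} \left( \frac{c \binom{n+t}{t}^{1/2}}{\varepsilon/2} \right)^{2.5 m \text{\L}}.
\end{equation*}
Collapsing these factors into the right-hand side of (\ref{eq:HausDistRate}) and absorbing the dependence on $m$ into $\gamma(n,\vp)$ (as provided by the bound on $\gamma$ in Theorem \ref{thm:OldBaldi}) yields the announced rate. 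The technical side condition $d \geq 2t + d_0$ from (\ref{eq:HausDistConvd0}) ensures that the degree $d_0$ needed to certify $1 - p_i \in \cQ_{d_0}(\vp)$ fits inside the ambient truncation so that the extension $L \in \cQ_d(\vp)^*$ is genuinely evaluable on all of $p$.

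The main obstacle is not conceptual — the three ingredients (Hahn--Banach, Lemma \ref{lem:M1KtClosed}, and Theorem \ref{thm:OldBaldi}) combine cleanly — but arithmetic: one must carefully track the sup-norm-versus-coefficient-norm conversion and the composition of the exponents $3.5 m \text{\L}$ and $2.5 m \text{\L}$ so as to land exactly on the powers $t^{6n\text{\L}}$, $\binom{n+t}{t}^{2.5 n \text{\L}}$, and $\varepsilon^{-2.5 n \text{\L}}$ together with the numeric constant $6^{2.5 n \text{\L}}$ appearing in (\ref{eq:HausDistRate}).
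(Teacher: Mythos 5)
This theorem is imported verbatim from \cite{baldi2021moment}; the paper itself offers no proof, so your argument has to be judged against the source. Your route --- the trivial inclusion $\cM^{(1)}(\bK)^t \subseteq \cL_d^{(1)}(\vp)^t$, then Hahn--Banach separation of a far-away $L$ from the compact convex set of truncated moment sequences (Lemma \ref{lem:M1KtClosed}), a shift of the separating polynomial to a witness $p \geq \varepsilon/2$ on $\bK$, and the effective Positivstellensatz to force $L(p)\geq 0$ and a contradiction --- is exactly the mechanism of the original proof, and the core of it is sound; your sup-norm bound $\norm{q} \leq \binom{n+t}{t}^{1/2}\norms{q}{\mathrm{coeff}}$ and the inclusion $\bK\subset[-1,1]^n$ are also correct.

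Two points need repair. First, you cannot ``absorb the dependence on $m$ into $\gamma(n,\vp)$'': in Theorem \ref{thm:OldBaldi} as stated, $m$ sits in the \emph{exponents} $3.5m\text{\L}$ and $2.5m\text{\L}$, and no choice of the multiplicative prefactor $\gamma$ turns $\varepsilon^{-2.5m\text{\L}}$ into the $\varepsilon^{-2.5n\text{\L}}$ of (\ref{eq:HausDistRate}). (This mismatch is arguably inherited from the paper, whose Corollary \ref{cor:poprate} already silently replaces $m$ by $n$; in the source the exponent is stated in terms of $n$.) Second, your reading of the hypothesis $d \geq 2t + d_0$ is off: since $\deg p \leq t \leq d$, evaluability of $L$ on $p$ is never in question. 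That hypothesis is what makes $\cL_d^{(1)}(\vp)^t$ uniformly bounded in $\norms{\cdot}{\mathrm{op}}$ (one needs certificates such as $1-\bX^{2\balpha}\in\cQ_d(\vp)$ for $\abs{\balpha}\leq t$ to control the pseudo-moments by $L(1)=1$), and that boundedness is what handles large $\varepsilon$: your estimate $\norm{p}\leq c\binom{n+t}{t}^{1/2}$, hence the constant $6$, is only valid for $\varepsilon$ up to a constant times $\binom{n+t}{t}^{1/2}$, and beyond that range the Putinar threshold you derive (which tends to a positive constant as $\varepsilon\to\infty$) can exceed the right-hand side of (\ref{eq:HausDistRate}); there the conclusion must instead come from the a priori diameter bound on the two sets. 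With those two items patched, the proof closes.
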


Inverting the expression (\ref{eq:HausDistRate}) for $\varepsilon$ gives the following corollary.

\begin{cor}\label{cor:ConvRateHausDist}
    Let $n,m,p_1,\ldots,p_m,K$ be as Theorem \ref{thm:BaldiMoment}. Let $t\in \N$. Then
    \begin{equation*}
        d_H(\cM ^{(1)}(\bK)^t, \cL _d^{(1)}(\vp)^t) \leq 6 \gamma(n,\vp)^{\frac{1}{2.5n\text{L}}} t^\frac{12}{5} \binom{n+t}{t} d^{-\frac{1}{2.5n\text{L}}}.
    \end{equation*}
    In particular, for each $L\in \cL _d^{(1)}(\vp)^t$ there exists a measure $\mu \in \cM ^{(1)}(\bK)^t$ such that for all $q \in \R[\bX]_t$ it holds
    \begin{equation}\label{eq:PseudomomentConvq}
        \abs{\int\limits q \; \od \mu - L(q)} \leq 6 \gamma(n,\vp)^{\frac{1}{2.5n\text{L}}} t^\frac{12}{5} \binom{n+t}{t} d^{-\frac{1}{2.5n\text{L}}} \norms{q}{\mathrm{coeff}}.
    \end{equation}
\end{cor}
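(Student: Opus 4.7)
The plan is to invert the implication in Theorem~\ref{thm:BaldiMoment} by choosing the critical value of $\varepsilon$ as a function of $d$. Concretely, I will set
\[
\varepsilon_d := 6\, \gamma(n,\vp)^{\frac{1}{2.5n\text{\L}}}\, t^{\frac{12}{5}}\, \binom{n+t}{t}\, d^{-\frac{1}{2.5n\text{\L}}}
\]
and verify by direct computation that the quantitative hypothesis of Theorem~\ref{thm:BaldiMoment} is satisfied \emph{with equality} for this $\varepsilon_d$. Indeed, raising to the $(-2.5n\text{\L})$-th power gives
\[
\varepsilon_d^{-2.5n\text{\L}} \;=\; \frac{d}{\gamma(n,\vp)\, 6^{2.5n\text{\L}}\, t^{6n\text{\L}}\, \binom{n+t}{t}^{2.5n\text{\L}}},
\]
where I used $\tfrac{6n\text{\L}}{2.5n\text{\L}} = \tfrac{12}{5}$ in the exponent of $t$. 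Thus $d = \gamma(n,\vp) 6^{2.5n\text{\L}} t^{6n\text{\L}} \binom{n+t}{t}^{2.5n\text{\L}} \varepsilon_d^{-2.5n\text{\L}}$, and the Hausdorff-distance conclusion of Theorem~\ref{thm:BaldiMoment} yields the first bound.

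Before invoking Theorem~\ref{thm:BaldiMoment} one must also ensure $d \geq 2t + d_0$. For $d < 2t + d_0$, the bound $\varepsilon_d$ is large (of order at least $\binom{n+t}{t}$ up to constants), while both $\cM^{(1)}(\bK)^t$ and $\cL_d^{(1)}(\vp)^t$ lie in a fixed bounded subset of $\R[\bX]_t^*$ with operator-norm diameter controlled by the normalization $L(1)=1$ together with the Archimedean hypothesis $1 - \|\bX\|^2 \in \cQ(\vp)$ (which implies $|L(\bX^{\balpha})| \leq 1$ for all $|\balpha| \leq t$). So in this regime the stated inequality holds trivially, possibly after absorbing a harmless numerical factor into the constant $6$; this edge case is the only mildly delicate point of the argument, but it is not deep.

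For the ``in particular'' part, I will unfold the definition of the Hausdorff distance with respect to $\norms{\cdot}{\mathrm{op}}$. Given $L \in \cL_d^{(1)}(\vp)^t$, the bound $d_H(\cM^{(1)}(\bK)^t,\cL_d^{(1)}(\vp)^t) \leq \varepsilon_d$ together with the fact that $\cM^{(1)}(\bK)^t$ is compact (Lemma~\ref{lem:M1KtClosed}) — hence the infimum in the Hausdorff distance is attained — provides a measure $\mu \in \cM(\bK)$ with $\mu(\bK) = 1$ such that
\[
\norms{L - L_\mu\big|_{\R[\bX]_t}}{\mathrm{op}} \;\leq\; \varepsilon_d.
\]
The definition of the dual norm in~(\ref{eq:DualNorm}) then yields, for every $q \in \R[\bX]_t$,
\[
\abs{L(q) - \int q\,\od\mu} \;=\; \abs{\bigl(L - L_\mu\bigr)(q)} \;\leq\; \norms{L - L_\mu\big|_{\R[\bX]_t}}{\mathrm{op}} \cdot \norms{q}{\mathrm{coeff}} \;\leq\; \varepsilon_d \norms{q}{\mathrm{coeff}},
\]
which is exactly~(\ref{eq:PseudomomentConvq}). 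No serious obstacle arises; the proof is essentially a bookkeeping exercise in inverting a power-law bound and in translating between the operator-norm formulation of the Hausdorff distance and the pointwise-on-polynomials formulation.
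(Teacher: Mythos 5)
Your proof is correct and follows essentially the same route as the paper: invert the power-law threshold in Theorem \ref{thm:BaldiMoment} to get the Hausdorff bound, then use compactness of $\cM^{(1)}(\bK)^t$ (Lemma \ref{lem:M1KtClosed}) and the definition of $\norms{\cdot}{\mathrm{op}}$ to extract the representing measure and deduce (\ref{eq:PseudomomentConvq}). Your remark on the regime $d < 2t + d_0$ addresses a hypothesis of Theorem \ref{thm:BaldiMoment} that the paper's own proof silently skips, so if anything you are slightly more careful.
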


\begin{proof}
    To show the first claim, we only need to re-arrange (\ref{eq:HausDistRate}) for $d$. For fixed $d\in \N$ we want to find minimal $\varepsilon> 0$ that satisfies $d\geq \gamma(n,\vp) 6^{2.5 n\text{\L}} t^{6n \text{\L}} \binom{n+t}{t}^{2.5 n\text{\L}} \varepsilon^{-2.5 n\text{\L}}$. That is, we choose $\varepsilon$ as
    \begin{equation*}
        \varepsilon := \left(\gamma(n,\vp) 6^{2.5 n\text{\L}} t^{6n \text{\L}} \binom{n+t}{t}^{2.5 n\text{\L}} \frac{1}{d} \right)^{\frac{1}{2.5 n\text{\L}}} > 0.
    \end{equation*}
    This gives exactly the expression in the first claim. To show (\ref{eq:PseudomomentConvq}), let $L \in \cL _d^{(1)}(\vp)^t$ and $q\in \R[\bX]_t$. By Lemma \ref{lem:M1KtClosed}, the set $\cM ^{(1)}(\bK)^t$ is compact. Thus, by using the first part of the statement, there exists a measure $\mu \in \cM(\bK)$ with $\mu(\bK) = 1$ such that
    \begin{equation}\label{eq:MinimizingMu}
        \norms{L - L_\mu}{\mathrm{op}}\leq 6 \gamma(n,\vp)^{\frac{1}{2.5n\text{L}}} t^\frac{12}{5} \binom{n+t}{t} d^{-\frac{1}{2.5n\text{L}}}.
    \end{equation}
    By applying the definition of the operator norm $\norms{\cdot}{\mathrm{coeff}}$ from (\ref{eq:DualNorm}) we get
    \begin{eqnarray*}
        \left| \int\limits q \; \od \mu - L(q) \right| & = & \left| L_\mu(q) - L(q) \right| \leq \norms{L_\mu\big|_{\R[\bX]_t} - L}{\mathrm{op}} \norms{q}{\mathrm{coeff}}\\
        & \overset{\text{(\ref{eq:MinimizingMu})}}{\leq} & 6 \gamma(n,\vp)^{\frac{1}{2.5n\text{L}}} t^\frac{12}{5} \binom{n+t}{t} d^{-\frac{1}{2.5n\text{L}}} \norms{q}{\mathrm{coeff}}.
    \end{eqnarray*}
    This shows (\ref{eq:PseudomomentConvq}).
\end{proof}

\paragraph{Atomic representation of measures}
    In this paragraph, we recall a result by Richter~\cite{richter1957parameterfreie} and Tchakaloff \cite{tchakaloff1957formules} (an English version can be found in~\cite[Proposition 4]{di2018multidimensional}) on quadrature rules. It states that, given finitely many moments of a measure $\mu\in \cM(\bK)$, we can find an atomic measure on $\bK$ matching those moments. 
    
    \begin{prop}[\cite{richter1957parameterfreie,tchakaloff1957formules}]\label{prop:Richter}
        Let $K\subset \R^n$ be a bounded measurable set and $\mu \in \cM(\bK)$ with $\mu(\bK) = 1$. Let $E\subset\R[\bX]$ be a finite dimensional subspace with $l := \mathrm{dim}(E)$. There exist an atomic measure $\nu := \sum\limits_{i = 1}^l a_i \delta_{\vx_i} \in \cM(\bK)$ with $\vx_1,\ldots,\vx_l \in \bK$ and $a_1,\ldots,a_l \in [0,1]$ such that
        \begin{equation*}
            \int\limits g \; \od \mu = \int\limits g \; \od \nu\; \left( = \sum\limits_{i = 1}^l a_i g(\vx_i)\right) \quad \text{for all } g\in E.
        \end{equation*}
    \end{prop}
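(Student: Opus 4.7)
}

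The plan is to encode the finitely many moment constraints as a single vector identity in $\R^l$ and then reduce the existence of a quadrature rule to a classical statement about convex cones, namely Carath\'eodory's theorem. First, I would pick any basis $g_1,\dots,g_l$ of $E$ and introduce the moment map $\Phi:\bK \to \R^l$, $\Phi(\vx) := (g_1(\vx),\dots,g_l(\vx))$. With this notation, the condition $\int g\,\od\mu = \int g\,\od\nu$ for every $g\in E$ is equivalent to the single vector equality $\int_\bK \Phi\,\od\mu = \int_\bK \Phi\,\od\nu$ in $\R^l$. Writing $v := \int_\bK \Phi\,\od\mu \in \R^l$, the task reduces to finding points $\vx_1,\dots,\vx_l \in \bK$ and weights $a_1,\dots,a_l \geq 0$ with $v = \sum_{i=1}^l a_i \Phi(\vx_i)$.

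Next, I would show that $v$ lies in the convex conic hull $C := \mathrm{cone}(\Phi(\bK)) = \{\sum_i t_i \Phi(\vy_i) : \vy_i \in \bK,\, t_i \geq 0\} \subset \R^l$. The inclusion $v \in \overline{C}$ is immediate from a separation argument: if $\ell \in (\R^l)^*$ satisfies $\ell \geq 0$ on $\Phi(\bK)$, then the polynomial $g := \sum_j \ell_j g_j \in E$ is non-negative on $\bK$, hence $\ell(v) = \int g\,\od\mu \geq 0$, so no hyperplane separates $v$ from $C$. Upgrading from $\overline{C}$ to $C$ itself is the delicate part; one reduces to the case where $\bK$ is compact by inner regularity of $\mu$, so that $\Phi(\bK)$ is compact, and then uses that the conic hull of a compact subset of $\R^l$ not containing the origin in its convex hull is closed.

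Once $v \in C$ is secured, Carath\'eodory's theorem for convex cones in $\R^l$ yields a representation $v = \sum_{i=1}^l a_i \Phi(\vx_i)$ with $\vx_i \in \bK$ and $a_i \geq 0$, and setting $\nu := \sum_{i=1}^l a_i \delta_{\vx_i}$ gives the required quadrature rule. The bound $a_i \in [0,1]$ holds whenever the constant polynomial $1$ lies in $E$ (which is the case in the application to Lemma \ref{lem:AlmostOptimalMeasure}, where $E = \mathrm{span}(\{f\} \cup \{\bX^{\balpha}:|\balpha|\leq r\})$): taking $g \equiv 1$ forces $\sum_i a_i = \mu(\bK) = 1$, so each $a_i \in [0,1]$. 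If $1 \notin E$, one first enlarges $E$ to $E + \R \cdot 1$ (gaining at most one extra atom) and the same argument applies.

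The main obstacle is the passage from $v \in \overline{C}$ to $v \in C$, i.e.\ proving that a representation by finitely many atoms exists rather than just an approximation by such. For compact $\bK$ this follows from classical closure properties of conic hulls of compact sets in $\R^l$; for merely bounded measurable $\bK$, one must combine inner regularity of $\mu$ with a compactness argument that extracts convergent subsequences from the atoms and weights of the approximating quadrature rules (which stay bounded because $\bK$ and $v$ are bounded, and the number of atoms is capped at $l$ at every stage).
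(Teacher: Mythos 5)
The paper does not prove this proposition; it cites Richter and Tchakaloff and points to \cite[Proposition 4]{di2018multidimensional}, so your attempt can only be judged on its own merits. Your framework (moment map $\Phi$, the conic hull $C=\mathrm{cone}(\Phi(\bK))\subset\R^l$, separation to get $v:=\int\Phi\,\od\mu\in\overline{C}$, conic Carath\'eodory to cap the number of atoms at $l$) is exactly the standard skeleton, and the separation step is correct. The problem sits precisely where you say it does, and your proposed repair does not close it. First, the lemma ``the conic hull of a compact set is closed'' requires $0\notin\mathrm{conv}(\Phi(\bK))$, and this can fail when $1\notin E$: for $\bK=[-1,1]$ and $E=\mathrm{span}\{X,X^2\}$ one has $\Phi(0)=0$, and $\mathrm{cone}(\Phi(\bK))$ is genuinely not closed (it contains $(1,\varepsilon)$ for all $\varepsilon>0$ but not $(1,0)$). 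Adjoining the constant $1$ does restore the hypothesis (then $\Phi(\bK)$ lies in an affine hyperplane missing the origin), but it raises the dimension to $l+1$ and hence the atom count to $l+1$, while the statement asserts $l$. Second, the reduction from a bounded \emph{measurable} $\bK$ to a compact one is not a reduction: inner regularity gives quadratures for $\mu\big|_{K_\varepsilon}$, and when you pass to the limit the atoms, which you only control as a bounded sequence, may converge to points of $\overline{\bK}\setminus\bK$, violating the requirement $\vx_i\in\bK$. (For the paper's application $\bK$ is compact and $1\in E$, so your argument does go through there; but it does not prove the proposition as stated.)

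The classical way to cross the gap from $v\in\overline{C}$ to $v\in C$ uses no topology on $\bK$ and no closedness of $C$: argue by induction on $l=\dim E$. Either $v\in\mathrm{relint}(C)\subset C$ and conic Carath\'eodory finishes with at most $l$ atoms, or $v$ and $C$ can be properly separated by some $\ell\neq 0$ with $\ell\geq 0$ on $\Phi(\bK)$ and $\ell(v)\leq 0$. Then $g:=\sum_j\ell_j g_j\in E$ is non-negative on $\bK$ with $\int g\,\od\mu=\ell(v)\leq 0$, hence $\int g\,\od\mu=0$ and $\mu$ is concentrated on $Z:=\bK\cap g^{-1}(\{0\})$. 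Since $g\neq 0$ in $E$ but $g\big|_Z=0$, the restriction $E\big|_Z$ has dimension at most $l-1$, and the induction hypothesis applied to $\mu$ on $Z$ produces at most $l-1$ atoms, all lying in $Z\subset\bK$. This also makes the weight bound clean: whenever $1\in E$ (as in the application to Lemma \ref{lem:AlmostOptimalMeasure}), matching the moment of $1$ forces $\sum_i a_i=1$, so $a_i\in[0,1]$, exactly as you observed. I would replace the ``closedness of the cone plus limiting argument'' step by this induction; the rest of your plan can stay as is.
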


\end{subequations}

\section{Flatness and finite convergence}\label{Appendix:Flatness}

\begin{subequations}
In this part of the appendix, we want to illustrate some intuition on the concept of flatness that was essential in Sections \ref{sec:ConvRatesHierarchy} and \ref{Sec:ExtractExactMinimizers}. We recall the notion of flatness from Definition \ref{def:flatness}.

\begin{definition}[{r-Flatness}]
    Let $r,d \in \N$ with $r \leq d$. A linear form $L:\R[\bX]_{2d}\rightarrow \R$ is $r$-flat if it holds
    \begin{equation*}
        \rank \ M^{L} = \rank \ M^{L_{-r}}
    \end{equation*}
    where $L_{-r}:= L\big|_{\R[\bX]_{2(d-r)}}:\R[\bX]_{2(d-r)}\rightarrow \R$ denotes the restriction of $L$ to $\R[\bX]_{2(d-r)}$ and $M^L$ respectively $M^{L_{-r}}$ are the moment-matrices of $L$ respectively $L_{-r}$ defined in (\ref{eq:MLBilinearForm}).
\end{definition}

To illustrate why flatness appears naturally, we consider first the example of a Dirac measure.

\begin{example}\label{ex:DiracMeasureMomentMatrix}
    Let $\mu = \delta_\vz \in \cM(\R^n)$ be the Dirac measure in a point $\vz\in \R^n$. Let $M^\mu = (M^\mu_{\balpha,\bbeta})_{\balpha,\bbeta \in \N^n}$ be its infinite moment matrix, analog to (\ref{eq:MLBilinearForm}), given by
    \begin{equation*}
        M^\mu_{\balpha,\bbeta} := \int\limits \bX^{\balpha + \bbeta} \; \od \mu = \vz^{\balpha + \bbeta} = \vz^{\balpha} \cdot \vz^{\bbeta} \quad \text{for } \balpha,\bbeta \in \N^n.
    \end{equation*}
    In other words, we have $M^\mu = \vv \cdot \vv^T$ for $\vv = (\vz^{\balpha})_{\balpha \in \N^n}$, i.e. $M^\mu$ is a (infinite) matrix of rank one. In particular, for $d\in \N$, its principal minors $(M^\mu_{\balpha,\bbeta})_{|\balpha|,|\bbeta|\leq d}$ are also of rank one. This means, that for $d\geq 2$ the linear form $L:\R[\bX]_{2d}\rightarrow \R$ given by $p\mapsto \int\limits p \; \od \mu$ satisfies the flatness criterion from Definition \ref{def:flatness}.
\end{example}

The following proposition extends the previous example to finitely-atomic measures and is a variant of~\cite[Theorem 5.1 (ii)]{laurent2009sums}.

\begin{prop}\label{prop:RankMomentMatrix}
    Let $\mu = \sum\limits_{i = 1}^l a_i \delta_{\vx_i}$ be a finitely-atomic measure on $\R^n$ with $l\in \N$, pairwise distinct points $\vx_1,\ldots,\vx_l\in \R^n$ and weights $a_1,\ldots,a_l > 0$. Let $M^\mu = (M^\mu_{\balpha,\bbeta})_{\balpha,\bbeta \in \N^n}$ be the infinite moment matrix given by
    \begin{equation*}
        M^\mu_{\balpha,\bbeta} := \int\limits \bX^{\balpha + \bbeta} \; \od \mu \quad \text{for } \balpha,\bbeta \in \N^n.
    \end{equation*}
    Then, there exists $d_0\in \N$ with $d_0\geq 2(l-1)$ such that for all $d\geq d_0$ the principal minors $(M^\mu_{\balpha,\bbeta})_{|\balpha|,|\bbeta|\leq d}$ have rank $l$.
\end{prop}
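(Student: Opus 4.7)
The plan is to exploit the atomic structure of $\mu$ to obtain a rank-$l$ factorisation of the truncated moment matrix, and then reduce the required rank equality to a standard multivariate interpolation fact.

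First, for $d \in \N$, set $\vv_d(\vz) := (\vz^{\balpha})_{|\balpha|\leq d} \in \R^{\binom{n+d}{d}}$ and let $V_d$ be the $\binom{n+d}{d} \times l$ matrix whose $i$-th column is $\vv_d(\vx_i)$. A direct expansion of the moments gives
$$ \bigl(M^\mu_{\balpha,\bbeta}\bigr)_{|\balpha|,|\bbeta|\leq d} \;=\; \sum_{i=1}^l a_i \,\vv_d(\vx_i)\vv_d(\vx_i)^T \;=\; V_d\, A\, V_d^T, \qquad A := \operatorname{diag}(a_1,\ldots,a_l), $$
since $\int \bX^{\balpha+\bbeta}\,\od\mu = \sum_i a_i \vx_i^{\balpha}\vx_i^{\bbeta}$. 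Because $A$ is positive definite (as $a_i>0$), one has $\rank(V_d A V_d^T) = \rank(V_d)$, so the problem reduces to showing that for $d$ sufficiently large the $l$ columns of $V_d$ are linearly independent.

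Second, I would translate this into a polynomial-interpolation question. Linear independence of $\vv_d(\vx_1),\ldots,\vv_d(\vx_l)$ is equivalent to surjectivity of the evaluation map $\mathrm{ev}\colon\R[\bX]_d\to\R^l$, $p\mapsto (p(\vx_1),\ldots,p(\vx_l))$: indeed $\sum_i c_i\vv_d(\vx_i)=0$ tested against the coefficient vector of an arbitrary $p\in\R[\bX]_d$ gives $\sum_i c_i p(\vx_i)=0$, and vice versa. Hence it suffices to produce, for each $i$, a polynomial $q_i\in\R[\bX]_d$ with $q_i(\vx_j)=\delta_{ij}$.

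Third, such separating polynomials come from a Lagrange-type product. Since $\vx_i\neq\vx_j$ for $j\neq i$, there exists $\vu_{ij}\in\R^n$ with $\langle\vu_{ij},\vx_i-\vx_j\rangle\neq 0$, so the affine form $\ell_{ij}(\vx):=\langle\vu_{ij},\vx-\vx_j\rangle/\langle\vu_{ij},\vx_i-\vx_j\rangle$ satisfies $\ell_{ij}(\vx_i)=1$ and $\ell_{ij}(\vx_j)=0$. The product $q_i:=\prod_{j\neq i}\ell_{ij}$ then lies in $\R[\bX]_{l-1}$ and obeys $q_i(\vx_k)=\delta_{ik}$ for all $k\in\{1,\ldots,l\}$. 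Consequently $V_d$ has full column rank $l$ as soon as $d\geq l-1$, and setting $d_0:=2(l-1)\geq l-1$ yields $\rank(M^\mu_{|\balpha|,|\bbeta|\leq d}) = l$ for every $d\geq d_0$, as claimed. No serious obstacle arises; the only mildly technical ingredient is the elementary construction of the Lagrange polynomials above.
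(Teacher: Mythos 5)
Your proof is correct and follows the same overall strategy as the paper's: factor the truncated moment matrix as a weighted sum of rank-one terms $\sum_i a_i \vv^{(d)}_i (\vv^{(d)}_i)^T$ and reduce the rank claim to linear independence of the truncated monomial vectors $\vv^{(d)}_i$, which is in turn settled by exhibiting polynomials of controlled degree separating the atoms. The one genuine difference lies in that last step. The paper separates the points with $g_j(\vx) := \prod_{i\neq j}\norms{\vx - \vx_i}{2}^2$, which has degree $2(l-1)$ and hence only certifies full rank for $d \geq 2(l-1)$; you instead build Lagrange-type products of affine forms $q_i = \prod_{j\neq i}\ell_{ij}$ of degree $l-1$, which certifies $\rank = l$ already for all $d \geq l-1$, a strictly sharper threshold (and your passage through $\rank(V_d A V_d^T) = \rank(V_d)$ for $A \succ 0$ handles both rank inequalities in one stroke, where the paper argues the upper and lower bounds separately). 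Both arguments establish the stated conclusion with $d_0 = 2(l-1)$; yours shows in addition that the conclusion already holds from degree $l-1$ on.
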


\begin{proof}
    By linearity (in $\mu$) of the integral we have
    \begin{equation*}
        M^\mu = \sum\limits_{i = 1}^l a_i M^{\delta_{\vx_i}} = \sum\limits_{i = 1}^l a_i \vv_i \vv_i^T \quad \text{ with } \quad \vv_i := \left(\vx_i^{\balpha}\right)_{\balpha \in \N^n} \text{ for } i = 1,\ldots,l,
    \end{equation*}
    where we used the representation $M^{\delta_{\vx_i}} = \vv_i \vv_i^T$ from Example \ref{ex:DiracMeasureMomentMatrix}. The same holds for the principal minors; for $d\in \N$ we have
    \begin{equation*}
        (M^\mu_{\balpha,\bbeta})_{|\balpha|,|\bbeta|\leq d} = \sum\limits_{i = 1}^l a_i \vv^{(d)}_i (\vv^{(d)}_i)^T \quad \text{ for } \quad \vv^{(d)}_i := \left(\vx_i^{\balpha}\right)_{\abs{\balpha}\leq d}.
    \end{equation*}
    This shows that all principal minors $(M^\mu_{\balpha,\bbeta})_{|\balpha|,|\bbeta|\leq d}$ have at most rank $l$. Next, we show that for $2(l-1)\leq d\in \N$ they also have at last rank $l$. To do so, it suffices to check that the truncated moment-vectors $\vv^{(d)}_1,\ldots,\vv^{(d)}_l$ are linearly independent. Let $d\in \N$ with $d\geq 2(l-1)$ and assume that there exist $c_1,\ldots,c_l \in \R$ such that
    \begin{equation}\label{eq:DeltaLinearlyIndep}
        \vv:= \sum\limits_{i = 1}^l c_i \vv^{(d)}_i  = 0.
    \end{equation}
     We want to show that this implies $c_1 = \ldots = c_l = 0$. First, we note for $g = \sum\limits_{\abs{\balpha}\leq d} w_{\balpha} \bX^{\balpha} \in \R[\bX]_d$ with $\vw := \left( w_{\balpha}\right)_{\abs{\balpha}\leq d}$ we get\vspace{-2mm}
    \begin{equation}\label{eq:Interpolg}
        0 \overset{\text{(\ref{eq:DeltaLinearlyIndep})}}{=} \vw^T \vv = \sum\limits_{i = 1}^l c_i \vw^T \vv^{(d)}_i = \sum\limits_{i = 1}^l c_i \sum\limits_{\abs{\balpha}\leq d} w_{\balpha} \vx_i^{\balpha} = \sum\limits_{i = 1}^l c_i g(\vx_i).
    \end{equation}
    Consider the polynomials $g_j(\vx) := \prod\limits_{i \neq j} \|\vx-\vx_i\|^2 \in \R[\bX]_{2(l-1)} \subset \R[\bX]_{d}$ for $1\leq j \leq l$. It holds $g_j(\vx_i) = 0$ for all $j \neq i$ and $g_j(\vx_j) \neq 0$. We get from  (\ref{eq:Interpolg}) that $0 \overset{\text{(\ref{eq:Interpolg})}}{=} \sum\limits_{i = 1}^l c_i g_j(\vx_i) = c_jg_j(\vx_j)$ and hence $c_j = 0$. Since $1\leq j \leq l$ was arbitrary, we conclude the statement. \qedhere
\end{proof}

The truncation degree $d_0$ from Proposition \ref{prop:RankMomentMatrix} can be non-trivial. That is, taking $d_0$ to be the smallest $d$, for which $(M^\mu_{\alpha,\beta})_{\abs{\balpha},|\bbeta|\leq d}$ has at least $l$ rows respectively columns, might not be sufficient. We state a simple case in the following example.

\begin{example}
    Let $n = 2$ and consider the finitely-atomic measure $\mu = \frac{1}{3} \delta_{\vx_1} + \frac{1}{3} \delta_{\vx_2} + \frac{1}{3} \delta_{\vx_3}$ with atoms $\vx_1 = (-1,0)$, $\vx_2 = (0,0)$ and $\vx_3 = (1,0)$. For $d = 1$ we have
    \begin{equation*}
        A:= (M^\mu_{\alpha,\beta})_{\abs{\balpha},\abs{\bbeta}\leq d} = \left( \begin{array}{cccc}
             1 & 0 & 0 \\
             0 & \frac{2}{3} & 0\\
             0 & 0 & 0
        \end{array} \right)
    \end{equation*}
    and it holds $\rank (A) = 2 < 3 = l$. Thus, using only degree one polynomials, we do not recognize that $\mu$ has \textit{three} atoms. The reason is that the atoms $\vx_1,\vx_2,\vx_3$ lie on the zero-locus of the degree-one polynomial $p(y,z) := z$.
\end{example}

A more interesting result is the converse of Proposition \ref{prop:RankMomentMatrix}, which we state in the following theorem.

\begin{thm}[{\cite[Theorem 6.19]{lasserre2015introduction}}]\label{thm:flatness2}
    Let $L:\R[\bX]_{2d} \rightarrow \R$ be linear and the corresponding matrix $M^L$ from (\ref{eq:MLBilinearForm}) be positive semidefinite. If $L$ is $1$-flat according to Definition \ref{def:flatness}, then there exists an atomic measure $\mu \in \cM(\R^n)$ with at most $\rank (M^{L})$-many atoms with
    \begin{equation*}
        L(p) = \int\limits p \; \od \mu \quad \text{for all } p \in \R[\bX]_{2d}.
    \end{equation*}
\end{thm}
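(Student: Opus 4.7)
The plan is to follow the classical Curto--Fialkow strategy: use $1$-flatness to produce a positive linear extension of $L$ to all of $\R[\bX]$ whose associated moment matrix still has rank $r := \rank M^L$, then realize the atoms as the joint spectrum of commuting self-adjoint multiplication operators on an $r$-dimensional quotient algebra.

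\textbf{Step 1 (flat extension).} First, use $1$-flatness to extend $L$ to a linear form $L^{(1)} : \R[\bX]_{2(d+1)} \to \R$ that is again PSD and $1$-flat. Concretely, the $1$-flatness $\rank M^L = \rank M^{L_{-1}}$ means every column of $M^L$ indexed by a degree-$d$ monomial $\bX^{\balpha}$ is a linear combination of columns indexed by monomials of degree $\leq d-1$; writing $\bX^{\balpha+e_i}=X_i\cdot \bX^{\balpha}$ and propagating those relations determines all new entries of the $(d{+}1)$-moment matrix. The nontrivial content is that these prescribed values are well-defined and preserve positivity and $1$-flatness; this is the standard flat extension lemma. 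Iterating, obtain $\tilde L : \R[\bX] \to \R$ extending $L$, with $B^{\tilde L}(g,h) := \tilde L(gh)$ PSD on every $\R[\bX]_k$ and of constant rank $r$.

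\textbf{Step 2 (ideal and quotient algebra).} Define $I := \{g \in \R[\bX] : \tilde L(gh)=0 \text{ for all }h\in\R[\bX]\}$. Because PSDness of $B^{\tilde L}$ persists under extension, the Cauchy--Schwarz inequality implies that $I$ is exactly the set of $g$ with $\tilde L(g^2)=0$, and it is closed under multiplication by arbitrary polynomials, so $I$ is an ideal. The quotient $A := \R[\bX]/I$ inherits a positive-definite inner product $\langle[g],[h]\rangle := \tilde L(gh)$ and has $\dim A = r$ by the rank condition.

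\textbf{Step 3 (diagonalization and atoms).} Multiplication by $X_i$ descends to an operator $M_i : A \to A$, and $\langle M_i[g],[h]\rangle = \tilde L(X_i g h) = \tilde L(g X_i h) = \langle[g], M_i[h]\rangle$, so each $M_i$ is self-adjoint; the $M_i$ commute since polynomial multiplication is commutative. By the spectral theorem for commuting real symmetric operators, there is an orthonormal basis $e_1,\dots,e_r$ of $A$ with $M_i e_k = \lambda_{i,k} e_k$, $\lambda_{i,k}\in\R$. Set $\vx_k := (\lambda_{1,k},\dots,\lambda_{n,k})\in\R^n$, expand $[1] = \sum_k \alpha_k e_k$, and define $a_k := \alpha_k^2 \geq 0$. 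For any $p\in\R[\bX]$, $p(M_1,\dots,M_n)e_k = p(\vx_k)e_k$, so
\[
\tilde L(p) \;=\; \langle[p],[1]\rangle \;=\; \langle p(M_1,\dots,M_n)[1],[1]\rangle \;=\; \sum_{k=1}^r a_k\,p(\vx_k).
\]
Thus $\mu := \sum_{k : a_k>0} a_k \delta_{\vx_k}$ is an atomic measure on $\R^n$ with at most $r$ atoms representing $L$ on $\R[\bX]_{2d}$.

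\textbf{Main obstacle.} The delicate part is Step 1: showing that $1$-flatness of $L$ really permits a well-defined, PSD, $1$-flat extension to $\R[\bX]_{2(d+1)}$. One must check that the relations forced by rewriting $\bX^{\balpha+e_i+e_j}=X_j(X_i\bX^{\balpha})=X_i(X_j\bX^{\balpha})$ do not conflict and that the resulting extended matrix remains PSD with the same rank. Everything after Step 1 is essentially the spectral theorem applied to a commuting family of real symmetric operators; the atomic representation and nonnegativity of the weights then fall out automatically from the PSD inner product on $A$.
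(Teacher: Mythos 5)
The paper does not prove this theorem at all: it is quoted verbatim from the literature (\cite[Theorem 6.19]{lasserre2015introduction}, i.e.\ the Curto--Fialkow flat extension theorem), so there is no in-paper argument to compare against. Your outline is the standard proof of that result: iterate the flat extension to a positive linear functional on all of $\R[\bX]$ of constant rank $r$, pass to the quotient by the kernel ideal, and simultaneously diagonalize the commuting self-adjoint multiplication operators to read off the atoms and weights. Steps 2 and 3 are correct and complete as written: Cauchy--Schwarz does give that the kernel is an ideal, the quotient has dimension $r$, the $M_i$ are self-adjoint and commute, and the weights $a_k=\alpha_k^2$ are automatically nonnegative with $\sum_k a_k = L(1)$.

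The genuine gap is the one you flag yourself: Step 1 is not a routine verification but is essentially the entire theorem. Showing that the column relations forced by $\rank M^L=\rank M^{L_{-1}}$ on a PSD matrix can be propagated consistently (i.e.\ that $X_j(X_i\bX^{\balpha})$ and $X_i(X_j\bX^{\balpha})$ receive the same value, and that the resulting degree-$(d{+}1)$ matrix is again PSD of rank $r$) is the Curto--Fialkow flat extension lemma, whose proof occupies several pages in the references the paper cites. As a blind reconstruction your write-up is the right architecture with the hard kernel deferred to an unproven lemma; it would be acceptable as a proof sketch citing that lemma, but not as a self-contained proof. A minor additional remark: one does not actually need to iterate the extension infinitely often before building the quotient --- once a single flat extension to degree $2(d+1)$ is available, the kernel of $M^{L}$ already generates a zero-dimensional ideal with $\R[\bX]=\spn\{\bX^{\balpha}:\abs{\balpha}\le d-1\}+I$, and the multiplication operators can be defined directly on that $r$-dimensional space; this shortens the argument but does not remove the need for the flat extension lemma itself.
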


The result in Theorem \ref{thm:KrepMeasure} refines the Theorem \ref{thm:flatness2} to limit the support of $\mu$ to semialgebraic sets.



\end{subequations}

\section{Simple moment bounds}

In this section, we provide two auxiliary results on computing and bounding moments, which have been used as technical ingredients in some parts of the paper.

\begin{lem}\label{lem:LipschitzMonomial}
    Let $n\in \N$, $x,y \in [0,1]^n$ and $\alpha \in \N^n$. It holds
    \begin{equation*}
        \abs{\vx^{\balpha} - \vy^{\balpha}} \leq |\balpha| \norms{\vx-\vy}{2}.
    \end{equation*}
\end{lem}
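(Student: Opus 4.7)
The plan is to reduce the multi-variable inequality to the one-dimensional bound $|a^k - b^k| \le k\,|a-b|$ (valid whenever $|a|,|b|\le 1$) via a coordinate-wise telescoping, and then to bound an $\ell^1$-weighted sum by an $\ell^2$-norm.

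\textbf{Step 1 (one-variable lemma).} For $a,b \in [0,1]$ and $k \in \N$, I would use the factorization
\[
a^k - b^k = (a-b)\sum_{j=0}^{k-1} a^{k-1-j} b^j,
\]
and note that each of the $k$ summands has absolute value at most $1$, giving $|a^k - b^k| \le k\,|a-b|$.

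\textbf{Step 2 (telescoping across coordinates).} Writing $\vx^{\balpha} = \prod_{j=1}^n x_j^{\alpha_j}$, I would telescope by changing one coordinate at a time:
\[
\vx^{\balpha} - \vy^{\balpha} = \sum_{i=1}^{n}\left(\prod_{j<i} x_j^{\alpha_j}\right)\bigl(x_i^{\alpha_i} - y_i^{\alpha_i}\bigr)\left(\prod_{j>i} y_j^{\alpha_j}\right).
\]
Since all coordinates lie in $[0,1]$, the two product factors on the left and right are bounded by $1$, so by Step 1,
\[
\abs{\vx^{\balpha} - \vy^{\balpha}} \le \sum_{i=1}^{n} \abs{x_i^{\alpha_i} - y_i^{\alpha_i}} \le \sum_{i=1}^{n} \alpha_i\,\abs{x_i - y_i}.
\]

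\textbf{Step 3 (passing to the Euclidean norm).} Since $\abs{x_i - y_i} \le \norms{\vx-\vy}{2}$ for each $i$, I would factor this out to get
\[
\sum_{i=1}^{n} \alpha_i\,\abs{x_i - y_i} \le \norms{\vx-\vy}{2} \sum_{i=1}^{n} \alpha_i = \abs{\balpha}\,\norms{\vx-\vy}{2},
\]
which yields the claim. There is no real obstacle here; the only point to get right is that both product factors arising from the telescoping are bounded by $1$, which is exactly why the hypothesis $\vx,\vy\in[0,1]^n$ (or, more generally, $[-1,1]^n$) is used.
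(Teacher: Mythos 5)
Your proof is correct, but it takes a different route from the paper. The paper proves the Lipschitz bound analytically: it computes the gradient of $g(\vz)=\vz^{\balpha}$, bounds $\norms{\nabla g(\vz)}{2}$ by $\abs{\balpha}$ on the hypercube via $\sum_i \alpha_i^2 \leq \abs{\balpha}^2$, and invokes the mean value theorem along the segment joining $\vx$ and $\vy$. You instead argue purely algebraically: the factorization $a^k-b^k=(a-b)\sum_{j=0}^{k-1}a^{k-1-j}b^j$ in one variable, a coordinate-wise telescoping of $\vx^{\balpha}-\vy^{\balpha}$, and the crude bound $\abs{x_i-y_i}\leq\norms{\vx-\vy}{2}$ followed by $\sum_i\alpha_i=\abs{\balpha}$. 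Both arguments are complete and yield the same constant; note that in the final step the paper uses the Cauchy--Schwarz-free bound through the gradient norm whereas you pass through the $\ell^1$-weighted sum, and both happen to land on $\abs{\balpha}$. Your version avoids differentiation entirely and makes the role of the hypothesis $\vx,\vy\in[0,1]^n$ (boundedness of the telescoping factors by $1$) more transparent; it also visibly extends to $[-1,1]^n$, which matches the setting $\bK\subset[-1,1]^n$ in which the lemma is actually applied. The paper's version is shorter to state once one accepts the mean value theorem on the convex hypercube. No gaps in your argument.
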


\begin{proof}
    We will show that the monomial $g(\vz) := \vz^{\balpha}$ has Lipschitz constant $ \abs{\balpha}$ on the hypercube $[0,1]^n$. The gradient $\nabla g = (\frac{\partial}{\partial_1} g,\ldots,\frac{\partial}{\partial_n} g)$ of $g$ is given by
    \begin{equation*}
        \frac{\partial}{\partial_i} g (\vz) = \alpha_i \vz^{\alpha_i - 1} \prod\limits_{j \neq i} \vz^{\alpha_j}, \quad \text{for } i = 1,\ldots,n.
    \end{equation*}
    For $\vz\in [0,1]^n$, we can bound $\norms{\nabla g(\vz)}{2}$ as
    \begin{eqnarray*}
        \norms{\nabla g(\vz)}{2}^2 & = & \sum\limits_{i = 1}^n |\alpha_i|^2 \underbrace{\abs{\vz^{\alpha_i - 1} \prod\limits_{j \neq i} \vz^{\alpha_j}}^2}_{\leq 1} \leq \sum\limits_{i = 1}^n |\alpha_i|^2 \leq \sum\limits_{i = 1}^n |\alpha_i| \abs{\balpha} = \abs{\balpha}^2.
    \end{eqnarray*}
    By the mean value theorem, we have
    \begin{equation*}
        \abs{\vx^{\balpha} - \vy^{\balpha}} = |g(\vx) - g(\vy)| \leq \norms{\vx-\vy}{2} \sup\limits_{\vz\in [-1,1]^n} \norms{\nabla g(\vz)}{2} \leq \norms{\vx-\vy}{2} \, |\balpha|.
    \end{equation*}
\end{proof}

The following lemma concerns vector valued integrals and can be found in any textbook on vector integrals, see for in instance~\cite[Section II.2 Theorem 4]{diestel1977vector}. For better self-containedness of this text, we state the result and its short proof.

\begin{lem}\label{lem:VectorIntNormBound}
    Let $h = (h_1,\ldots,h_n):\R^n\rightarrow \R^n$ be continuous, $\bK \subset \R^n$ be compact and $\mu \in \cM(\bK)$. It holds
    \begin{equation*}
        \norms{\left(\int\limits h_i\; \od \mu\right)_{i = 1,\ldots,n}}{2} \leq \int\limits \norms{h(\vx)}{2}\; \od \mu(\vx).
    \end{equation*}
\end{lem}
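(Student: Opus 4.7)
The plan is to prove the inequality by duality: I would test the left-hand vector against an arbitrary unit vector $\vu \in \R^n$ and use linearity of the integral together with the Cauchy--Schwarz inequality pointwise.

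First I would set $\vz := \left(\int h_i \, d\mu\right)_{i=1,\ldots,n} \in \R^n$. If $\vz = 0$, the inequality is trivial since the right-hand side is non-negative (as $\mu$ is a non-negative measure and $\norm{h(\vx)}_2 \geq 0$), so I would dispose of this case immediately. Otherwise, I would choose the specific unit vector $\vu := \vz / \norm{\vz}_2$. By linearity of the integral,
$$\vu^T \vz \;=\; \sum_{i=1}^n u_i \int h_i \, d\mu \;=\; \int \sum_{i=1}^n u_i h_i(\vx) \, d\mu(\vx) \;=\; \int \vu^T h(\vx) \, d\mu(\vx).$$
Pointwise Cauchy--Schwarz gives $\vu^T h(\vx) \leq \norm{\vu}_2 \, \norm{h(\vx)}_2 = \norm{h(\vx)}_2$ for every $\vx \in \bK$, and since $\mu$ is non-negative, integration preserves the inequality:
$$\int \vu^T h(\vx) \, d\mu(\vx) \;\leq\; \int \norm{h(\vx)}_2 \, d\mu(\vx).$$
Combining with the identity $\vu^T \vz = \norm{\vz}_2$ (which holds by the choice of $\vu$) yields $\norm{\vz}_2 \leq \int \norm{h(\vx)}_2 \, d\mu(\vx)$, which is the claim.

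There is essentially no obstacle here; the only sanity checks needed are that $h_i$ is $\mu$-integrable (which follows from continuity of $h$ on the compact set $\bK$ so that each $h_i$ is bounded, together with finiteness of $\mu$ on the compact set) and that $\vx \mapsto \norm{h(\vx)}_2$ is likewise measurable and integrable (continuous, hence measurable; bounded on $\bK$). All integrals therefore are finite and the manipulations above are justified.
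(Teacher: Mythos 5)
Your proof is correct and uses essentially the same duality argument as the paper: the paper writes $\norms{\vz}{2}$ as a supremum of $\langle \vu,\vz\rangle$ over the unit ball and exchanges the supremum with the integral, whereas you instantiate the supremum directly at its maximizer $\vu = \vz/\norms{\vz}{2}$ and apply pointwise Cauchy--Schwarz. The integrability remarks and the separate treatment of $\vz = 0$ are fine but not essential.
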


\begin{proof}
    Let $\langle \cdot,\cdot\rangle$ denote the Euclidean inner product. We have by linearity of the Euclidean inner product and by continuity of the integral
    \begin{eqnarray*}
        \norms{\left(\int\limits h_i\; \od \mu\right)_{i = 1,\ldots,n}}{2} & = & \sup\limits_{\substack{z \in \R^n\\ \norms{z}{2}\leq 1}} \left\langle z, \left(\int\limits h_i\; \od \mu\right)_{i = 1,\ldots,n} \right\rangle = \sup\limits_{\substack{z \in \R^n\\ \norms{z}{2}\leq 1}}  \int\limits \langle z,h(\vx)\rangle\; \od \mu(\vx)\\
        & \leq & \int\limits \sup\limits_{\substack{z \in \R^n\\ \norms{z}{2}\leq 1}} \langle z,h(\vx) \rangle\; \od \mu(\vx) = \int\limits \norms{h(\vx)}{2}\; \od \mu(\vx). 
    \end{eqnarray*}
\end{proof}

\section{Semialgebraic functions}
    In this text, we frequently make use of the \L{}ojasiewicz inequality, Theorem \ref{thm:loja}, which applies to semialgebraic functions. Here, we provide a minimal background on semialgebraic functions. Details can be found in any book on real algebra, such as~\cite{bochnak2013real} among others. We begin with semialgebraic sets.

    \begin{definition}[Semialgebraic sets]
        A set $\bK \subset \R^n$ is called semialgebraic if it is the finite union of sets of the form $\{\vx \in \R^n: p_1(\vx),\ldots,p_m(\vx) \geq 0\}$, $\{\vx \in \R^n: q_1(\vx),\ldots,q_k(\vx) > 0\}$ or $\{\vx \in \R^n: h_1(\vx),\ldots,h_l(\vx) = 0\}$, where $k,l,m\in \N$ and $p_1,\ldots,p_m,q_1,\ldots,q_k,h_1,\ldots,h_l \in \R[\bX]$.
    \end{definition}

    \begin{rem}
        A closed basic semialgebraic set $\bK = \cK(\vp) := \{\vx \in \R^n: p_1(\vx),\ldots,p_m(\vx) \geq 0\}$ from Definition \ref{def:BasicSemialgebraic} is semialgebraic.
    \end{rem}

    The notion of a semialgebraic function is based on semialgebraic sets. 

    \begin{definition}[Semialgebraic function]\label{def:SemialgebraicFunction}
        Let $K\subset \R^n$ be a semialgebraic set. A function $f:\bK \rightarrow \R$ is called semialgebraic if its graph $\mathrm{Gr}(f) := \{(x,y) :\vx \in \bK, y = f(\vx)\} \subset \R^{n+1}$ is semialgebraic.
    \end{definition}
    
    Any polynomial function $f\in \R[\bX]$ is semialgebraic. But also non-smooth functions can be semialgebraic, such as the indicator function of a semialgebraic set $\bK$, i.e. $\mathds{1}_K:\R^n \rightarrow \R$ defined by $\mathds{1}_{\bK}(\vx) := 1$ for $\vx \in \bK$ and $\mathds{1}_{\bK}(\vx) := 0$ else. To see this note, that the graph of $\mathds{1}_{\bK}$ is given by
    \begin{equation*}
        \mathrm{Gr}(\mathds{1}_{\bK}) := \{(x,y) \in \R^{n+1}:\vx \in \bK, y - 1= 0\} \cup \{(x,y) \in \R^{n+1}: x\notin \bK, y = 0\}.
    \end{equation*}

    \begin{lem}\label{lem:DistSemialgebraic}
        Let $S\subset \R^n$ be a compact semialgebraic set. Then $\dist(\cdot,S):\R^n \rightarrow \R$ with $\dist(x,S):= \min\limits_{y\in S} \norms{x-y}{2}$ is a semialgebraic function.
    \end{lem}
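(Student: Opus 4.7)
The plan is to invoke the Tarski--Seidenberg projection theorem (see~\cite{bochnak2013real}): the coordinate projection of a semialgebraic set is semialgebraic, and the class of semialgebraic sets is closed under finite unions, intersections, and complements. With this tool in hand, it suffices to describe the graph $\mathrm{Gr}(\dist(\cdot,S))\subset \R^n\times\R$ by means of these closure operations, starting from sets cut out by polynomial (in)equalities together with the semialgebraic constraint $y\in S$.

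Concretely, I introduce the two auxiliary sets
\begin{align*}
    U & := \{(x,t,y) \in \R^n\times \R \times \R^n : y \in S,\ t \geq 0,\ t^2 - \norms{x-y}{2}^2 \geq 0\},\\
    V & := \{(x,t,y) \in \R^n\times \R \times \R^n : y \in S,\ t \geq 0,\ t^2 - \norms{x-y}{2}^2 > 0\}.
\end{align*}
Both $U$ and $V$ are semialgebraic: the condition $y \in S$ is semialgebraic by hypothesis, the remaining defining constraints are polynomial, and semialgebraic sets are closed under finite intersection. Let $\pi:\R^n\times \R\times \R^n\to \R^n\times \R$ denote the projection onto the first $n+1$ coordinates. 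By Tarski--Seidenberg, both $\pi(U)$ and $\pi(V)$ are semialgebraic.

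Because $S$ is compact and $y\mapsto \norms{x-y}{2}$ is continuous, the infimum in $\dist(x,S)$ is attained. A short case analysis then yields
\[
    \pi(U) = \{(x,t)\in \R^n\times \R : \dist(x,S)\leq t \text{ and } t\geq 0\},\qquad \pi(V) = \{(x,t)\in \R^n\times \R : \dist(x,S) < t\}.
\]
Using $\dist(x,S)\geq 0$, one checks the set-theoretic identity
\[
    \mathrm{Gr}(\dist(\cdot,S)) = \{(x,t)\in \R^n\times \R : t = \dist(x,S)\} = \pi(U)\setminus \pi(V),
\]
and the right-hand side is semialgebraic by closure under complement and intersection.

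The only nontrivial point is the choice of $U$ and $V$ so that the above identity holds on the nose; once that is in place, every step reduces to a direct application of Tarski--Seidenberg. Compactness of $S$ enters only to guarantee that $\inf_{y\in S}\norms{x-y}{2}$ is a minimum, so that the existential quantifier hidden in $\pi(U)$ really realizes the value $\dist(x,S)$. There is no substantive obstacle.
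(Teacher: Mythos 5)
Your proof is correct and follows essentially the same route as the paper: both describe the graph of $\dist(\cdot,S)$ by quantifying a polynomial inequality in $(x,t,y)$ over $y\in S$ and then appeal to Tarski--Seidenberg / quantifier elimination, with compactness of $S$ used only to ensure the infimum is attained. The only cosmetic difference is that you encode the universal quantifier as a set difference of two projections (which, as a bonus, correctly enforces $t\geq 0$ and rules out the spurious branch $t=-\dist(x,S)$), whereas the paper applies quantifier elimination to the $\forall$-defined set directly.
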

    \begin{proof}
        The graph of $\dist$ is given by
        \begin{eqnarray*}
            \mathrm{Gr}(\dist) & = & \{(x,r) \in \R^{n+1} : \exists y \in S \; \norms{x-y}{2}^2 - r^2 = 0\}\cap \\
            & & \{(x,r) \in \R^{n+1} : \forall y \in S \; \norms{x-y}{2}^2 - r^2 \geq 0\}.
        \end{eqnarray*}
        By the quantifier elimination theorem~\cite[Proposition 5.2.2]{bochnak2013real} both of the sets on the right-hand side in the above equation are semialgebraic. Hence, also their intersection is semialgebraic. This proves the claim.
    \end{proof}
\bibliography{references}

\begin{thebibliography}{10}

\bibitem{vargas2024hardness}
On the hardness of deciding the finite convergence of lasserre hierarchies.
\newblock {\em Numerical Algebra, Control and Optimization}, 2024.

\bibitem{ahmadi2019dsos}
Amir~Ali Ahmadi and Anirudha Majumdar.
\newblock Dsos and sdsos optimization: more tractable alternatives to sum of
  squares and semidefinite optimization.
\newblock {\em SIAM Journal on Applied Algebra and Geometry}, 3(2):193--230,
  2019.

\bibitem{arora2009computational}
Sanjeev Arora and Boaz Barak.
\newblock {\em Computational complexity: a modern approach}.
\newblock Cambridge University Press, 2009.

\bibitem{augier2023symmetry}
Nicolas Augier, Didier Henrion, Milan Korda, and Victor Magron.
\newblock Symmetry reduction and recovery of trajectories of optimal control
  problems via measure relaxations.
\newblock {\em ESAIM: Control, Optimisation and Calculus of Variations}, 30:63,
  2024.

\bibitem{bach2023exponential}
Francis Bach and Alessandro Rudi.
\newblock Exponential convergence of sum-of-squares hierarchies for
  trigonometric polynomials.
\newblock {\em SIAM Journal on Optimization}, 33(3):2137--2159, 2023.

\bibitem{baldi2021moment}
Lorenzo Baldi and Bernard Mourrain.
\newblock {On the effective Putinar’s Positivstellensatz and moment
  approximation}.
\newblock {\em Mathematical Programming}, 200(1):71--103, 2023.

\bibitem{baldi2024exact}
Lorenzo Baldi and Bernard Mourrain.
\newblock Exact moment representation in polynomial optimization.
\newblock {\em Journal of Symbolic Computation}, page 102403, 2024.

\bibitem{baldi2024lojasiewicz}
Lorenzo Baldi, Bernard Mourrain, and Adam Parusi{\'n}ski.
\newblock On {\l}ojasiewicz inequalities and the effective putinar's
  positivstellensatz.
\newblock {\em Journal of Algebra}, 2024.

\bibitem{baldi2023psatz}
Lorenzo Baldi and Lucas Slot.
\newblock Degree bounds for {P}utinar’s {P}ositivstellensatz on the
  hypercube.
\newblock {\em SIAM Journal on Applied Algebra and Geometry}, 8(1):1--25, 2024.

\bibitem{blekherman2006there}
G.~Blekherman.
\newblock There are significantly more nonegative polynomials than sums of
  squares.
\newblock {\em Israel Journal of Mathematics}, 153(1):355--380, 2006.

\bibitem{bochnak2013real}
Jacek Bochnak, Michel Coste, and Marie-Francoise Roy.
\newblock {\em Real algebraic geometry}, volume~36.
\newblock Springer Science \& Business Media, 2013.

\bibitem{burgdorf2016optimization}
Sabine Burgdorf, Igor Klep, Janez Povh, et~al.
\newblock {\em Optimization of polynomials in non-commuting variables},
  volume~2.
\newblock Springer, 2016.

\bibitem{curto2000truncated}
Ra{\'u}l Curto and Lawrence Fialkow.
\newblock The truncated complex k-moment problem.
\newblock {\em Transactions of the American mathematical society},
  352(6):2825--2855, 2000.

\bibitem{de2019approximate}
Yohann De~Castro, Fabrice Gamboa, Didier Henrion, Roxana Hess, and Jean-Bernard
  Lasserre.
\newblock Approximate optimal designs for multivariate polynomial regression.
\newblock {\em The Annals of Statistics}, 47(1):127--155, 2019.

\bibitem{de2017convergence}
Etienne De~Klerk, Monique Laurent, and Zhao Sun.
\newblock Convergence analysis for lasserre’s measure-based hierarchy of
  upper bounds for polynomial optimization.
\newblock {\em Mathematical Programming}, 162(1):363--392, 2017.

\bibitem{de2002approximation}
Etienne De~Klerk and Dmitrii~V Pasechnik.
\newblock Approximation of the stability number of a graph via copositive
  programming.
\newblock {\em SIAM Journal on Optimization}, 12(4):875--892, 2002.

\bibitem{di2018multidimensional}
Philipp~J di~Dio and Konrad Schm{\"u}dgen.
\newblock The multidimensional truncated moment problem: atoms, determinacy,
  and core variety.
\newblock {\em Journal of Functional Analysis}, 274(11):3124--3148, 2018.

\bibitem{diestel1977vector}
J~Diestel and JJ~Uhl.
\newblock Vector measures, math, 1977.

\bibitem{fang2021sum}
Kun Fang and Hamza Fawzi.
\newblock The sum-of-squares hierarchy on the sphere and applications in
  quantum information theory.
\newblock {\em Mathematical Programming}, 190(1):331--360, 2021.

\bibitem{fang2021quantum}
Kun Fang and Hamza Fawzi.
\newblock {The Sum-of-Squares hierarchy on the sphere and applications in
  quantum information theory}.
\newblock {\em Mathematical Programming}, 190:331--360, 2021.

\bibitem{henrion2020moment}
Didier Henrion, Milan Korda, and Jean~Bernard Lasserre.
\newblock {\em Moment-sos Hierarchy, The: Lectures In Probability, Statistics,
  Computational Geometry, Control And Nonlinear Pdes}, volume~4.
\newblock World Scientific, 2020.

\bibitem{henrion2005detecting}
Didier Henrion and Jean-Bernard Lasserre.
\newblock Detecting global optimality and extracting solutions in gloptipoly.
\newblock In {\em Positive polynomials in control}, pages 293--310. Springer,
  2005.

\bibitem{infusino2023intrinsic}
Maria Infusino, Salma Kuhlmann, Tobias Kuna, and Patrick Michalski.
\newblock An intrinsic characterization of moment functionals in the compact
  case.
\newblock {\em International Mathematics Research Notices}, 2023(3):2281--2303,
  2023.

\bibitem{klep2023state}
Igor Klep, Victor Magron, Jurij Vol{\v{c}}i{\v{c}}, and Jie Wang.
\newblock State polynomials: positivity, optimization and nonlinear bell
  inequalities.
\newblock {\em Mathematical Programming}, pages 1--47, 2023.

\bibitem{klep2018minimizer}
Igor Klep, Janez Povh, and Jurij Volcic.
\newblock Minimizer extraction in polynomial optimization is robust.
\newblock {\em SIAM Journal on Optimization}, 28(4):3177--3207, 2018.

\bibitem{korda2014convex}
Milan Korda, Didier Henrion, and Colin~N Jones.
\newblock Convex computation of the maximum controlled invariant set for
  polynomial control systems.
\newblock {\em SIAM Journal on Control and Optimization}, 52(5):2944--2969,
  2014.

\bibitem{korda2024convergence}
Milan Korda, Victor Magron, and Rodolfo Rios-Zertuche.
\newblock Convergence rates for sums-of-squares hierarchies with correlative
  sparsity.
\newblock {\em Mathematical Programming}, pages 1--39, 2024.

\bibitem{laraki2012semidefinite}
Rida Laraki and Jean~B Lasserre.
\newblock Semidefinite programming for min--max problems and games.
\newblock {\em Mathematical programming}, 131:305--332, 2012.

\bibitem{lasserre2001global}
Jean~B Lasserre.
\newblock Global optimization with polynomials and the problem of moments.
\newblock {\em SIAM Journal on optimization}, 11(3):796--817, 2001.

\bibitem{lasserre2009moments}
Jean~B. Lasserre.
\newblock {\em Moments, positive polynomials and their applications}, volume~1.
\newblock World Scientific, 2009.

\bibitem{lasserre2011new}
Jean~B Lasserre.
\newblock A new look at nonnegativity on closed sets and polynomial
  optimization.
\newblock {\em SIAM Journal on Optimization}, 21(3):864--885, 2011.

\bibitem{lasserre2015introduction}
Jean~B. Lasserre.
\newblock {\em An introduction to polynomial and semi-algebraic optimization},
  volume~52.
\newblock Cambridge University Press, 2015.

\bibitem{lasserre2008nonlinear}
Jean~B Lasserre, Didier Henrion, Christophe Prieur, and Emmanuel Tr{\'e}lat.
\newblock Nonlinear optimal control via occupation measures and
  lmi-relaxations.
\newblock {\em SIAM journal on control and optimization}, 47(4):1643--1666,
  2008.

\bibitem{lasserre2008semidefinite}
Jean~Bernard Lasserre, Monique Laurent, and Philipp Rostalski.
\newblock Semidefinite characterization and computation of zero-dimensional
  real radical ideals.
\newblock {\em Foundations of Computational Mathematics}, 8:607--647, 2008.

\bibitem{lasserre2022christoffel}
Jean~Bernard Lasserre, Edouard Pauwels, and Mihai Putinar.
\newblock {\em The Christoffel--Darboux Kernel for Data Analysis}, volume~38.
\newblock Cambridge University Press, 2022.

\bibitem{laurent2009sums}
Monique Laurent.
\newblock Sums of squares, moment matrices and optimization over polynomials.
\newblock In {\em Emerging applications of algebraic geometry}, pages 157--270.
  Springer, 2009.

\bibitem{laurent2005semidefinite}
Monique Laurent and Franz Rendl.
\newblock Semidefinite programming and integer programming.
\newblock {\em Handbooks in Operations Research and Management Science},
  12:393--514, 2005.

\bibitem{laurent2024convergence}
Monique Laurent and Lucas Slot.
\newblock Convergence rates of sums of squares hierarchies for polynomial
  optimization.
\newblock {\em arXiv preprint arXiv:2408.04417}, 2024.

\bibitem{lojasiewicz1959probleme}
Stanis{\l}aw {\L}ojasiewicz.
\newblock Sur le probleme de la division.
\newblock {\em Studia Mathematica}, 18(1):87--136, 1959.

\bibitem{marx2021semi}
Swann Marx, Edouard Pauwels, Tillmann Weisser, Didier Henrion, and Jean~Bernard
  Lasserre.
\newblock Semi-algebraic approximation using christoffel--darboux kernel.
\newblock {\em Constructive Approximation}, pages 1--39, 2021.

\bibitem{motzkin1967arithmetic}
T.~S. Motzkin.
\newblock The arithmetic-geometric inequality.
\newblock {\em Inequalities (Proc. Sympos. Wright-Patterson Air Force Base,
  Ohio, 1965)}, pages 205--224, 1967.

\bibitem{nie2013certifying}
Jiawang Nie.
\newblock Certifying convergence of lasserre’s hierarchy via flat truncation.
\newblock {\em Mathematical Programming}, 142:485--510, 2013.

\bibitem{nie2014exact}
Jiawang Nie.
\newblock Optimality conditions and finite convergence in {L}asserre's
  hierarchy.
\newblock {\em Mathematical Programming A}, 146:97--121, 2014.

\bibitem{nie2007complexity}
Jiawang Nie and Markus Schweighofer.
\newblock {On the complexity of Putinar's Positivstellensatz}.
\newblock {\em Journal of Complexity}, 23(1):135--150, 2007.

\bibitem{parrilo2000structured}
Pablo~A Parrilo.
\newblock {\em Structured semidefinite programs and semialgebraic geometry
  methods in robustness and optimization}.
\newblock California Institute of Technology, 2000.

\bibitem{putinar1993positive}
Mihai Putinar.
\newblock Positive polynomials on compact semi-algebraic sets.
\newblock {\em Indiana University Mathematics Journal}, 42(3):969--984, 1993.

\bibitem{richter1957parameterfreie}
Hans Richter.
\newblock Parameterfreie absch{\"a}tzung und realisierung von erwartungswerten.
\newblock {\em Bl{\"a}tter der DGVFM}, 3(2):147--162, 1957.

\bibitem{riener2013exploiting}
Cordian Riener, Thorsten Theobald, Lina~Jansson Andr{\'e}n, and Jean~B
  Lasserre.
\newblock {Exploiting symmetries in SDP-relaxations for polynomial
  optimization}.
\newblock {\em Mathematics of Operations Research}, 38(1):122--141, 2013.

\bibitem{schlosser2024specialized}
Corbinian Schlosser and Matteo Tacchi-Bénard.
\newblock Specialized effective positivstellens{\"a}tze for improved
  convergence rates of the moment-sos hierarchy.
\newblock {\em IEEE Control Systems Letters}, 2024.

\bibitem{schmudgen2017moment}
Konrad Schm{\"u}dgen and Konrad Schm{\"u}dgen.
\newblock The moment problem on compact semi-algebraic sets.
\newblock {\em The Moment Problem}, pages 283--313, 2017.

\bibitem{schweighofer2005optimization}
Markus Schweighofer.
\newblock Optimization of polynomials on compact semialgebraic sets.
\newblock {\em SIAM Journal on Optimization}, 15(3):805--825, 2005.

\bibitem{simon2008christoffel}
Barry Simon.
\newblock The christoffel-darboux kernel.
\newblock In {\em Perspectives in PDE, Harmonic Analysis and Applications,” a
  volume in honor of VG Maz’ya’s 70th birthday, Proceedings of Symposia in
  Pure Mathematics}, volume~79, pages 295--335, 2008.

\bibitem{slot2022sum}
Lucas Slot.
\newblock Sum-of-squares hierarchies for polynomial optimization and the
  christoffel--darboux kernel.
\newblock {\em SIAM Journal on Optimization}, 32(4):2612--2635, 2022.

\bibitem{tacchi2022convergence}
Matteo Tacchi-Bénard.
\newblock {Convergence of Lasserre’s hierarchy: the general case}.
\newblock {\em Optimization Letters}, 16:1015--1033, 2022.

\bibitem{tchakaloff1957formules}
Vladimir Tchakaloff.
\newblock Formules de cubatures m{\'e}caniques {\`a} coefficients non
  n{\'e}gatifs.
\newblock {\em Bull. Sci. Math}, 81(2):123--134, 1957.

\bibitem{tran2024convergence}
Hoang~Anh Tran and Kim-Chuan Toh.
\newblock Convergence rates of sos hierarchies for polynomial semidefinite
  programs.
\newblock {\em arXiv preprint arXiv:2406.12013}, 2024.

\bibitem{wang2021tssos}
Jie Wang, Victor Magron, and Jean-Bernard Lasserre.
\newblock Tssos: A moment-sos hierarchy that exploits term sparsity.
\newblock {\em SIAM Journal on optimization}, 31(1):30--58, 2021.

\end{thebibliography}
\bibliographystyle{plain}

\end{document}